\newcommand{\V}{\mathcal{V}}
\newcommand{\K}{\mathrm{K}}
\newcommand{\into}{\hookrightarrow}
\newcommand{\onto}{\twoheadrightarrow}
\newcommand{\op}{\mathrm{op}}
\renewcommand{\hat}{\widehat}
\newcommand{\gen}[1]{\langle #1 \rangle}
\theoremstyle{definition}
\newtheorem{theorem}{Theorem}
\newtheorem{definition}[theorem]{Definition}
\newtheorem{lemma}[theorem]{Lemma}
\newtheorem{corollary}[theorem]{Corollary}
\newtheorem{proposition}[theorem]{Proposition}
\newtheorem{example}[theorem]{Example}
\newtheorem{remark}[theorem]{Remark}
\newtheorem*{outline}{Outline of the paper}
\numberwithin{theorem}{section}
\newcommand*\cocolon{%
\nobreak
\mskip6mu plus1mu
\mathpunct{}%
\nonscript
\mkern-\thinmuskip
{:}%
\mskip2mu
\relax
}
\newcommand{\eq}{\approx}
\newcommand{\m}[1]{{\bf {#1} }}
\renewcommand{\a}{\ensuremath{\alpha}}
\renewcommand{\b}{\ensuremath{\beta}}
\newcommand{\ga}{\ensuremath{\gamma}}
\newcommand{\si}{\ensuremath{\sigma}}
\newcommand{\de}{\ensuremath{\delta}}
\DeclareMathOperator{\Con}{\mathrm{Con}}
\DeclareMathOperator{\KCon}{\mathrm{KCon}}
\newcommand{\cg}[1]{{\rm Cg}_{_{#1}}}
\newcommand{\cls}[1]{\mathcal{#1}}
\newcommand{\mdl}[1]{\models_{#1}}
\newcommand{\lgc}[1]{\ensuremath{{\sf #1}}}
\newcommand{\der}[1]{\ensuremath{\vdash_\mathbf{#1}}}
\newcommand{\De}{\Delta}
\newcommand{\Ga}{\Gamma}
\newcommand{\Si}{\Sigma}
\newcommand{\ep}{\varepsilon}
\newcommand{\F}{\m{F}}
\newcommand{\Tm}{\m{T}}
\newcommand{\Tmc}{\mathrm{T}}
\newcommand{\lang}{\mathcal{L}}
\newcommand{\x}{\overline{x}}
\newcommand{\y}{\overline{y}}
\newcommand{\z}{\overline{z}}
\newcommand{\w}{\overline{w}}
\newcommand{\uvar}{\overline{u}}
\journal{Annals of Pure and Applied Logic} 
\begin{document}

\begin{frontmatter}

\title{Uniform Interpolation and Compact Congruences}

\author{Samuel J. van Gool\fnref{thankssam}}
\address{Department of Mathematics, City College of New York, U.S.A.}
\address{ILLC, University of Amsterdam, The Netherlands}
\ead{samvangool@me.com}
\fntext[thankssam]{Supported by the European Union's Horizon 2020 research and innovation programme under the Marie Sklodowska-Curie grant agreement 655941.}

\author{George Metcalfe\corref{cor}\fnref{thanks}}
\address{Mathematical Institute, University of Bern, Switzerland}
\ead{george.metcalfe@math.unibe.ch}
\cortext[cor]{Corresponding author}
\fntext[thanks]{Supported by Swiss National Science Foundation (SNF) grant 200021{\_}146748.}	

\author{Constantine Tsinakis}
\address{Department of Mathematics, Vanderbilt University, U.S.A.}
\ead{constantine.tsinakis@vanderbilt.edu}


\begin{abstract}
Uniform interpolation properties are defined for equational consequence in a variety of algebras and related to properties of compact congruences on first the free and then the finitely presented algebras of the variety. It is also shown, following related results of Ghilardi and Zawadowski, that a combination of these properties provides a sufficient condition for the first-order theory of the variety to admit a model completion.
\end{abstract}

\begin{keyword}
Uniform Interpolation \sep Compact Congruences \sep Amalgamation \sep\\ Model Completions
\end{keyword}

\end{frontmatter}


\section{Introduction}

The following remarkable feature of intuitionistic propositional logic $\lgc{IPC}$ was established by A.~M.~Pitts in~\cite{Pit92}. Given any formula $\a(\bar{x},\bar{y})$ of the logic, there exist formulas $\a^L(\bar{y})$ and $\a^R(\bar{y})$, {\em left} and {\em right uniform interpolants} of $\a$ with respect to $\bar{x}$, respectively,
such that for any formula $\b(\bar{y},\bar{z})$,
\[
\der{IPC} \b \to \a \iff \ \der{IPC} \b \to \a^L
\quad \text{and} \quad
\der{IPC} \a \to \b \iff\ \der{IPC} \a^R \to \b.
\]
Each of the seven intermediate propositional logics admitting the usual Craig interpolation property also admits uniform interpolation; however, there are modal logics, such as $\lgc{S4}$, that admit Craig interpolation but not uniform interpolation (see~\cite{GZ02} for details and references).

The main aim of this paper is to study uniform interpolation in the setting of universal algebra. Instead of a propositional logic, we consider a variety of algebras: a class of algebraic structures of the same signature that is defined by equations (equivalently, closed under homomorphic images, subalgebras, and direct products). Although there may exist faithful translations between the variety and a propositional logic (e.g., Boolean algebras and classical logic, Heyting algebras and intuitionistic logic, MV-algebras and {\L}ukasiewicz logic), this need not be the case. In particular, the signature may not contain a `suitable' implication connective. We therefore focus here on consequences in the variety between a set of equations on the left and a single equation on the right. The most natural interpolation property in this setting is {\em deductive interpolation} (studied in~\cites{Jon65,Pig72,Mak77,Wro84a,Cze85,Ono86,Cze07,MMT14}), which coincides with Craig interpolation only in the presence of a suitable deduction theorem. Indeed, a variety with the congruence extension property admits deductive interpolation if, and only if, it has the amalgamation property (see~\cite{MMT14} for proofs and references). In this paper, we define left and right uniform interpolation properties for a variety and obtain corresponding algebraic characterizations. We also obtain algebraic characterizations for uniform versions of the Maehara interpolation property studied in~\cites{Wro84a,Cze85,MMT14}.

The starting point for our study of uniform interpolation is the extensive work on interpolation and model completions by Ghilardi and Zawadowski, collected in the monograph~\cite{GZ02}. These authors studied category-theoretic properties of varieties that correspond to propositional logics with (left and right) uniform interpolation, establishing these properties for certain varieties of Heyting and modal algebras. Our motivations here for supplementing this category-theoretic view of uniform interpolation with a universal algebraic perspective are two-fold. First, we define left and right uniform interpolation as specific properties of a variety, whereas in~\cite{GZ02} these arise as combinations of other properties. This allows us to identify new examples of varieties with and without uniform interpolation, not restricted to intermediate and modal logics, including groups, MV-algebras, implicative semilattices, Sugihara monoids, and bounded lattices. Second, our algebraic perspective exhibits connections between uniform interpolation and known properties in equational logic and universal algebra. In particular, we obtain uniform versions of Maehara interpolation and a better understanding of the connection between uniform interpolation and amalgamation.

A further logical motivation for studying uniform interpolation lies in its relationship to the notion of a model completion, which originated in the groundbreaking work on model-theoretic algebra of A.~Robinson \cite{Rob63}. A model completion of a first-order theory axiomatizes the class of algebras in which `all potentially solvable equations possess solutions' and always has quantifier elimination. The prototypical example is the theory of fields, whose model completion is the theory of algebraically closed fields. Ghilardi and Zawadowski showed in~\cite{GZ02} that category-theoretic properties of varieties of algebras for intermediate and modal logics with uniform interpolation imply the existence of a model completion for the first-order theory of the variety. Building on their work, we relate the algebraic uniform interpolation properties introduced in this paper to the existence of a model completion. This approach is also related to the use in~\cites{Mar12,MM12} of model-theoretic methods (in particular, quantifier elimination) to establish the amalgamation property for certain varieties of semilinear commutative residuated lattices. 

The key notions needed for this algebraic view of uniform interpolation turn out to be compact (i.e., finitely generated) congruences and pairs of adjoint maps between them. The importance of compact congruences was already implicit in \cite{GZ02}, where they appear as `regular monomorphisms in the opposite of a category of finitely presented algebras'. A notable conceptual contribution of our work is that we view these notions as central to the algebraic study of uniform interpolation. Compact congruences are needed in order to obtain a stable theory of uniform interpolation when venturing outside the realm of Heyting and modal algebras considered in~\cite{GZ02}, where compact congruences may be harmlessly identified with certain elements of algebras (see also Remark~\ref{rem:heyting} below). In other settings, such as that of groups, more care is required, and the semilattice of compact congruences is the appropriate setting for studying uniform interpolation.

\begin{outline}
After introducing the necessary background and notation in Section~\ref{sec:prelim}, we establish the main theoretical results for right and left uniform interpolation in Sections~\ref{sec:right}~and~\ref{sec:left}, respectively. In particular, Theorems~\ref{thm:rudipalgebraic} and~\ref{thm:ludipalgebraic} characterize varieties admitting an algebraic form of right and left uniform interpolation. In Section~\ref{sec:modelcompletions}, we recast the results of~\cite{GZ02} in this algebraic light, using the characterizations of the previous sections to give an algebraic proof of the existence of a model completion in the presence of right and left uniform interpolation. Throughout the paper, we consider a range of examples of varieties for which uniform interpolation properties hold, or fail to hold.
\end{outline}


\section{Congruence and consequence}\label{sec:prelim}

In this section, we recall notions and basic results from universal algebra required for the paper, focusing in particular on (compact) congruences of algebras and how they relate to equational consequence in a variety. In Subsection~\ref{subsec:congprelim}, we consider join-semilattices of compact congruences, and liftings of homomorphisms to these semilattices. In Subsection~\ref{subsec:consprelim}, we describe the relationship between equational consequence and congruences on free algebras of a variety. Finally, in Subsection~\ref{subsec:dedinterp}, we recall the notion of deductive interpolation and related algebraic properties such as amalgamation. We assume familiarity with basic definitions from universal algebra, as provided in, e.g.,~\cite{BS81}*{Ch. I--II}. 


\subsection{Congruence lattices and lifting homomorphisms}\label{subsec:congprelim}

An element $k$ in a complete lattice $\m{L}$ is {\it compact} if, whenever $k \leq \bigvee S$ for some $S \subseteq L$, there exists a finite $F \subseteq S$ such that $k \leq \bigvee F$. The set of compact elements of $\m{L}$ forms a join-subsemilattice of $\m{L}$, denoted by $\K\m{L}$. An {\it algebraic} lattice is a complete lattice in which every element is a join of compact elements (see, e.g.,~\cites{CD73, DP02, Gierz03} for further details). 

For any algebra $\m A$, the set of congruences on $\m A$ is denoted by $\Con \m A$. The intersection of a set of congruences on $\m A$ is again a congruence on $\m A$, so $\Con \m{A}$ is a complete lattice under the inclusion order.\footnote{We adhere here to the usual convention in universal algebra that congruences are ordered by set-theoretic inclusion; hence, $\theta \leq \psi$ if, and only if, the natural map from $\m A/{\theta}$ to $\m A/{\psi}$ is well-defined. We warn the reader that this convention sometimes makes an order-reversal necessary when comparing with other settings, cf., e.g., Remarks~\ref{rem:catthry}~and~\ref{rem:heyting}.} In fact, $\Con \m A$ is always an algebraic lattice~\cite{BS81}*{Theorem~5.5}. For any $S \subseteq A \times A$, we denote by $\cg{\m A} S$ the {\it congruence on $\m A$ generated by $S$}. Recall that the compact elements of $\Con \m A$ are exactly the {\it finitely generated} congruences; we denote the set of these compact congruences on $\m A$ by $\KCon \m{A}$. Note that, although $\KCon \m A$ is always a join-subsemilattice of $\Con \m A$, meets in $\KCon \m A$ need not exist in general. 

A pair of maps $f \colon \m P \leftrightarrows \m Q \cocolon g$ between partially ordered sets $\m{P},\m{Q}$ is called an \emph{adjunction} or \emph{adjoint pair} if for all $a \in P$, $b \in Q$, it holds that $f(a) \leq b$ if, and only if, $a \leq g(b)$. The map $f$ is called \emph{left adjoint} to $g$, and $g$ is called \emph{right adjoint} to $f$. Note that if $f, g$ is an adjoint pair, then they are both monotone; in fact, $f$ preserves arbitrary existing joins in $\m P$ and $g$ preserves arbitrary existing meets in $\m Q$. 
In this case, the composite map $g \circ f$ is a \emph{closure operator} on $P$. We refer to, e.g.,~\cite{DP02}*{Ch.~7} or~\cite{Gierz03}*{Sec.~O-3} for more background on adjunctions and closure operators.

In the following definition, central to this paper, we lift a homomorphism between algebras to an adjoint pair between their congruence lattices. 

\begin{definition}\label{dfn:lifting}
Let $h \colon \m A \to \m B$ be a homomorphism. The \emph{adjoint lifting} of $h$ to the congruence lattices of $\m{A}$ and $\m{B}$ is the pair of maps
\begin{center}
$h^* \colon \Con \m{A} \leftrightarrows \Con \m{B} \cocolon h^{-1}$,
\begin{align*}
h^*(\psi) &:= \cg{\m B}\{(h(a),h(a')) : (a,a') \in \psi\}, \\
h^{-1}(\theta) &:= \{(a,a') \in A^2 : (h(a),h(a')) \in \theta\} = \ker h(-)/{\theta}.
\end{align*}
\end{center}
We call $h^*$ the {\it direct image lifting} and $h^{-1}$ the {\it inverse image lifting} of $h$. We denote the composite map $h^{-1} \circ h^*$ by $c_h \colon \Con \m A \to \Con \m A$. 
\end{definition}

For any homomorphism $h \colon \m A \to \m B$, $h^*$ is left adjoint to $h^{-1}$ and hence the map $c_h$ is a closure operator on the complete lattice $\Con \m A$. Moreover, $h^*$ restricts to a map $\KCon \m A \to \KCon \m B$, which we call the \emph{compact lifting} of $h$. Note, however, that $h^{-1}$ need not preserve compact congruences in general. (Consider, for example, any non compact congruence $\theta$ on an algebra $\m{A}$ and a homomorphism $h \colon \m{A} \to \m{A / \theta}$ such that $\theta = h^{-1}(\De_\m{A})$.)

Regarding the existence of a right adjoint to the compact lifting of a homomorphism, we have the following general fact.

\begin{proposition}\label{prop:adjcompact}
Let $f \colon \m{L} \leftrightarrows \m{M} \cocolon g$ be an adjunction between algebraic lattices such that $f$
preserves compact elements. The restriction $f|_{\K\m{L}}$ has a right adjoint if, and only if, $g$ preserves
compact elements, and in this case, the right adjoint of $f|_{\K\m{L}}$ is $g|_{\K\m{M}}$.
\end{proposition}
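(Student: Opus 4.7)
The plan is to prove both directions by directly invoking the universal property of the adjunction between $f$ and $g$, restricted to compact elements, and then using the defining property of algebraic lattices (that every element is the join of the compact elements below it) to pin down the right adjoint uniquely.

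For the easy direction, suppose $g$ preserves compact elements, so that $g|_{\K\m{M}}$ is a well-defined map $\K\m{M} \to \K\m{L}$. For any $k \in \K\m{L}$ and $m \in \K\m{M}$, the original adjunction gives
\[
f|_{\K\m{L}}(k) = f(k) \leq m \iff k \leq g(m) = g|_{\K\m{M}}(m),
\]
so $g|_{\K\m{M}}$ is right adjoint to $f|_{\K\m{L}}$, as desired.

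For the converse, suppose $f|_{\K\m{L}}$ has a right adjoint $r \colon \K\m{M} \to \K\m{L}$. The key claim is that $r(m) = g(m)$ for every $m \in \K\m{M}$; once this is shown, it follows immediately that $g(m) = r(m) \in \K\m{L}$, so $g$ preserves compact elements and coincides with $r$ on $\K\m{M}$. To prove the claim, fix $m \in \K\m{M}$. For any $k \in \K\m{L}$, combining the two adjunctions yields
\[
k \leq r(m) \iff f(k) \leq m \iff k \leq g(m).
\]
Hence $r(m)$ and $g(m)$ have exactly the same compact elements of $\m{L}$ below them. Since $\m{L}$ is algebraic, every element is the join of the compact elements it dominates, so $r(m) = g(m)$.

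The argument is essentially a two-line chase through the adjunctions, and there is no real obstacle. The one place where care is needed is the final step of the converse: one must explicitly invoke algebraicity of $\m{L}$ to pass from "same compact elements below" to equality; without this hypothesis the conclusion would fail. It is also worth noting that the assumption that $f$ preserves compact elements is used only to guarantee that $f|_{\K\m{L}}$ lands in $\K\m{M}$ in the first place, so that asking for a right adjoint on the compact parts makes sense.
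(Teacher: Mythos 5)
Your proof is correct and follows essentially the same route as the paper: restrict the adjunction for the easy direction, and for the converse compare the compact elements below $r(m)$ and $g(m)$ via the two adjunctions and conclude by algebraicity of $\m{L}$. The only (immaterial) difference is that the paper obtains $g'(m) \leq g(m)$ directly from $fg'(m) \leq m$ without invoking algebraicity, whereas you use algebraicity symmetrically for both inequalities.
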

\begin{proof}
If $g$ preserves compact elements, then $g|_{\K\m{M}} \colon \K\m{M} \to \K\m{L}$ is clearly right adjoint to $f|_{\K\m{L}}$. Conversely, suppose that $f|_{\K\m{L}}$ has a right adjoint $g'$. Let $m \in \K\m{M}$. Then $fg'(m) \leq m$, so $g'(m) \leq g(m)$, since $g$ is right adjoint to $f$. Moreover, for any $l \in \K\m{L}$ such that $l \leq g(m)$, we have $f(l) \leq m$, and hence $l \leq g'(m)$, since $g'$ is right adjoint to $f|_{\K\m{M}}$. As $g(m)$ is the join of compact elements below it, $g(m) \leq g'(m)$. So $g(m) = g'(m)$ is compact. \end{proof}

\noindent
In particular, for any homomorphism $h \colon \m A \to \m B$ between algebras, $h^*|_{\KCon \m{A}}$ has a right adjoint if, and only if, $h^{-1}$ preserves compact congruences.

\begin{remark}
\label{rem:catthry}
Let $\V$ be a variety. The congruence lattice of an algebra $\m A$ in $\V$ is isomorphic to the opposite of the lattice $\mathrm{rSub}_{\V^\op}(\m A)$ of {\it regular subobjects} of $\m A$, regarded as an object in the category $\V^\op$. Under this anti-isomorphism, the adjoint lifting $(h^*,h^{-1})$ of $h$ corresponds to the adjunction, usually denoted by $(\exists_h,h^*)$ in categorical logic. Moreover, if $\m A$ is finitely presented, then the join-semilattice of compact congruences is isomorphic to the opposite of the lattice $\mathrm{rSub}_{\V_{\mathrm{fp}}^\op}(\m A)$ of regular subobjects of $\m A$ in the full subcategory $\V_{\mathrm{fp}}^\op$ of $\V^\op$ whose objects are the finitely presented algebras. We refer to, e.g.,~\cite{GZ02}*{Ch.~2}, for more details.
\end{remark}

\begin{remark}
\label{rem:heyting}
The congruence lattice of any Heyting algebra $\m A$ is isomorphic to the filter lattice of $\m A$ (the isomorphism sends a congruence $\theta$ on $\m A$ to the $\theta$-equivalence class of the top element of $\m A$, see, e.g.,~\cite{BD74}). The compact filters are just the principal filters, and hence $\m A$ is isomorphic to the opposite of $\KCon \m{A}$. 
\end{remark}

We end this subsection by establishing a general lattice-theoretic fact about compact elements in intervals (Lemma~\ref{lem:compininterval}) and some properties of surjective homomorphisms (Proposition~\ref{prop:surjective}) that will be used in Section~\ref{sec:left}. These results belong to the folklore of the subject; we include proofs here for the sake of completeness.

\begin{lemma}\label{lem:compininterval}
Let $\m L$ be a complete lattice and $k \in \K \m L$. For any $a \in [k,\top]_{\m L}$, it holds that $a \in \K\m L$ if, and only if, $a \in \K([k,\top]_{\m L})$.
\end{lemma}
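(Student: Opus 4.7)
The plan is to prove the two implications separately, with the compactness of $k$ in $\m L$ playing a role only in the backward direction.

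For the forward direction, suppose $a$ is compact in $\m L$ with $a \in [k,\top]_{\m L}$. Given any $S \subseteq [k,\top]_{\m L}$ with $a \leq \bigvee S$, I would observe that when $S$ is non-empty, the join $\bigvee S$ computed in $[k,\top]_{\m L}$ coincides with the one computed in $\m L$, because every element of $S$ already lies above $k$. Compactness of $a$ in $\m L$ then delivers a finite $F \subseteq S$ with $a \leq \bigvee F$, where again the join agrees in both lattices. The case $S = \emptyset$ is a harmless edge case, where $\bigvee_{[k,\top]_{\m L}} S = k$ forces $a = k$, which is witnessed by $F = \emptyset$.

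For the backward direction, suppose $a$ is compact in $[k,\top]_{\m L}$ with $a \geq k$, and let $S \subseteq L$ with $a \leq \bigvee S$. The first move is to lift $S$ into the interval by setting $S' := \{s \vee k : s \in S\} \subseteq [k,\top]_{\m L}$. Then $\bigvee S' = \bigvee S \vee k \geq a$, so compactness of $a$ in the interval produces a finite subset $F \subseteq S$ (picking one preimage for each chosen element of $S'$) such that $a \leq (\bigvee F) \vee k$. This is where I still need to remove the unwanted term $k$: here I invoke the hypothesis that $k$ is compact in $\m L$. Since $k \leq a \leq \bigvee S$, compactness of $k$ yields a finite $G \subseteq S$ with $k \leq \bigvee G$, and then
\[
a \;\leq\; (\bigvee F) \vee k \;\leq\; \bigvee F \vee \bigvee G \;=\; \bigvee (F \cup G),
\]
exhibiting $a$ as compact in $\m L$ since $F \cup G$ is a finite subset of $S$.

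The two directions are entirely straightforward given the idea of replacing $S$ by $\{s \vee k : s \in S\}$ in the backward direction; I do not expect any real obstacle. The only subtlety worth being explicit about is that compactness of $k$ in the ambient lattice is essential precisely for absorbing the stray $\vee k$ that appears after applying compactness in the interval.
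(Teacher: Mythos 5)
Your proof is correct and follows essentially the same route as the paper: the backward direction replaces $S$ by $\{s \vee k : s \in S\}$, applies compactness of $a$ in the interval, and then uses compactness of $k$ to absorb the extra $\vee\, k$ by enlarging the finite subset. The forward direction, which the paper dismisses as immediate, is handled in your version with the same (correct) observation that joins of subsets of $[k,\top]_{\m L}$ agree in the interval and in $\m L$.
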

\begin{proof}
If $a \in \K \m L$, then certainly $a \in \K([k,\top]_{\m L})$. For the converse, suppose that $a \in \K([k,\top]_{\m L})$. Consider $S \subseteq L$ satisfying $a \leq \bigvee S$. Since $k \leq a$, we have $a = a \lor k \leq \bigvee_{s \in S} (s \lor k)$, and, by assumption, there exists a finite $F_1 \subseteq S$ such that $a \leq \bigvee_{s \in F_1} (s \lor k)$. Moreover, since $k \in \K \m L$ and $k \leq a \leq \bigvee S$, there exists a finite $F_2 \subseteq S$ such that $k \leq \bigvee F_2$. Combining the two inequations, we see that $a \leq \bigvee (F_1 \cup F_2)$, as required.
\end{proof}

\begin{proposition}[cf.~{\cite{MMT14}*{Lem.~1}}]\label{prop:surjective} 
Let $h \colon \m{A} \onto \m{B}$ be a surjective homomorphism. Then:
\begin{enumerate}
\item[(a)] For any $\psi \in \Con \m{A}$, $c_h(\psi) = \psi \lor \ker h$.
\item[(b)] The image of $c_h$ is the interval $[\ker h, \nabla_{\m A}]_{\Con \m A}$.
\item[(c)] If $\ker h$ is compact, then $h^{-1}$ preserves compact congruences.
\end{enumerate}
\end{proposition}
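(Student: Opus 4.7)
For part (a), my plan is to exploit the fact that when $h$ is surjective, $h^*$ and $h^{-1}$ restrict to mutually inverse lattice isomorphisms between $\Con \m{B}$ and the interval $[\ker h, \nabla_{\m A}]_{\Con \m A}$ (the classical correspondence theorem). The key observation is that surjectivity forces $h^* \circ h^{-1} = \mathrm{id}_{\Con \m B}$: the generating set $\{(h(a),h(a')) : (h(a),h(a')) \in \theta\}$ for $h^*(h^{-1}(\theta))$ already equals $\theta$, since every pair in $\theta$ is of the form $(h(a),h(a'))$. Consequently, the fixed points of the closure operator $c_h = h^{-1} \circ h^*$ are exactly the congruences of the form $h^{-1}(\theta)$, i.e., the interval $[\ker h, \nabla_{\m A}]_{\Con \m A}$. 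Since $c_h$ is a closure operator on $\Con \m A$, its value at $\psi$ is the least fixed point above $\psi$, which is the least element of $[\ker h, \nabla_{\m A}]_{\Con \m A}$ above $\psi$, namely $\psi \lor \ker h$.

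Part (b) then falls out immediately: from (a), every value $c_h(\psi) = \psi \lor \ker h$ lies in $[\ker h, \nabla_{\m A}]_{\Con \m A}$, and conversely, every $\psi$ in that interval satisfies $\psi \lor \ker h = \psi$, so $\psi = c_h(\psi)$ is in the image.

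For part (c), given $\theta \in \KCon \m B$, I want to conclude $h^{-1}(\theta) \in \KCon \m A$. Since $\ker h \leq h^{-1}(\theta)$, Lemma~\ref{lem:compininterval} (applied with $k = \ker h$, which is compact by hypothesis) reduces this to showing that $h^{-1}(\theta)$ is compact in the interval $[\ker h, \nabla_{\m A}]_{\Con \m A}$. But the restriction of $h^{-1}$ is a lattice isomorphism from $\Con \m B$ onto this interval (by the correspondence used in part (a), with inverse the restriction of $h^*$), and lattice isomorphisms preserve compactness. Since $\theta$ is compact in $\Con \m B$, the image $h^{-1}(\theta)$ is compact in the interval, hence compact in $\Con \m A$. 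There is no real obstacle here; the only subtlety is verifying the surjectivity-dependent identity $h^* \circ h^{-1} = \mathrm{id}$ that underwrites the correspondence theorem, and this is a one-line check from the definition of $h^*$.
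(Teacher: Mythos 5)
Your proposal is correct and follows essentially the same route as the paper: part (a) rests on the Correspondence Theorem together with basic closure-operator facts (you package this as ``$c_h(\psi)$ is the least fixed point above $\psi$'' and verify $h^*\circ h^{-1}=\mathrm{id}$ explicitly, while the paper argues the two inclusions directly), (b) is the same immediate consequence, and (c) uses exactly the paper's argument via the lattice isomorphism $h^{-1}\colon \Con\m{B}\to[\ker h,\nabla_{\m A}]_{\Con\m A}$ and Lemma~\ref{lem:compininterval}.
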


\begin{proof}
(a) Let us write $\theta$ for $\ker h$. The inclusion `$\supseteq$' is clear because $\theta = c_h(\De_{\m{A}})$ and $c_h$ is a closure operator. For the inclusion `$\subseteq$', note that, since $\psi \lor \theta$ is a congruence in the interval $[\theta,\nabla_{\m{A}}] \subseteq \mathrm{Con}(A)$, the Correspondence Theorem~\cite{BS81}*{Thm.~6.20} gives $c_h(\psi \lor \theta) = \psi \lor \theta$. Since $c_h$ is order-preserving, $c_h(\psi) \subseteq c_h(\psi \lor \theta)$, from which the required inclusion follows.

(b) This is a consequence of (a) and the fact that $\psi$ is in the image of $c_h$ if, and only if, $c_h(\psi) = \psi$.

(c) Since $h^{-1}$ is a lattice isomorphism from $\Con \m B$ to $[\ker h,\nabla_{\m A}]_{\Con \m A}$, it sends compact elements of $\Con \m B$ to compact elements of $[\ker h,\nabla_{\m A}]_{\Con \m A}$. By Lemma~\ref{lem:compininterval}, the compact elements of $[\ker h,\nabla_{\m A}]_{\Con \m A}$ are exactly those elements of $\KCon \m A$ that contain $\ker h$. 
\end{proof}


\subsection{Equational consequence and free algebras}\label{subsec:consprelim}

We briefly recall here some basic facts about free algebras and equational consequence, referring to, e.g.,~\cite{MMT14}*{Sec.~2} and~\cite{BS81}*{II.10} for more background. For convenience, let us assume for the rest of the paper that $\lang$ is a fixed algebraic signature with at least one constant symbol. For any (possibly infinite) set of variables $\x$, we denote by $\Tm(\x)$ the {\em $\lang$-term} {\em algebra over $\x$}, recalling that any assignment $f$ from $\x$ to the universe $A$ of an $\lang$-algebra $\m A$ lifts uniquely to a homomorphism $\hat{f} \colon \Tm(\x) \to \m A$. Elements $\a, \b$ of $\Tmc(\x)$ are called \emph{$\lang$-terms}; \emph{$\lang$-equations} are ordered pairs of elements $(\a,\b)$ of $\Tmc(\x)$, written also as $\a \eq \b$. We denote arbitrary $\lang$-equations by $\ep$ and sets of $\lang$-equations by $\Si,\De,\Ga,\Pi$, and write $\a(\x)$, $\ep(\x)$, or $\Si(\x)$ to denote that the variables of, respectively, an $\lang$-term $\a$, $\lang$-equation $\ep$, or set of $\lang$-equations $\Si$ are included in $\x$. We also adopt the convention that $\x,\y,\z$, etc. always denote disjoint sets of variables, and often write $\x,\y$ to denote their disjoint union.

Given a set of $\lang$-equations $\Si(\x)$, an $\lang$-algebra $\m A$, and an assignment $f \colon \x \to A$, we write $\m A, f \models \Si$ to denote that $\Si \subseteq \ker \hat{f}$. We also write $\m A \models \Si$ if $\m A, f \models \Si$ for all assignments $f \colon \x \to A$. Let $\cls{K}$ be any class of $\lang$-algebras. Then we define for a set of $\lang$-equations $\Si \cup \{\ep\}$ containing exactly the variables in the set $\x$,
\[
\begin{array}{rcc}
\Si \mdl{\cls{K}} \ep & \iff & \text{for every $\m{A} \in \cls{K}$ and assignment $f \colon \x \to A$,}\\[.1in]
& & \m A, f \models \Si \ \ \Rightarrow \ \ \m A, f \models \ep.
\end{array}
\]
Given a set of $\lang$-equations $\Si \cup \De$, we write $\Si \mdl{\cls{K}} \De$ if $\Si \mdl{\cls{K}} \ep$ for all $\ep \in \De$.

\begin{remark}
Informally, $\mdl{\cls{K}}$ may be called the `equational consequence relation' of $\cls{K}$. A genuine equational consequence relation (in the sense of Blok and Pigozzi~\cite{BP89}) is obtained only if we restrict the considered equations to a fixed set of variables. Note, however, that if $\cls{K}$ is a variety, then the consequence relation of $\cls{K}$ over a fixed countably infinite set of variables determines the consequence relation for any set of variables (see~\cite{MMT14}*{Sec.~2}). Also note that, in terms of first-order logic, if $\Si$ is finite, then $\Si \models_{\cls{K}} \ep$ is equivalent to: the universal sentence $\forall \x (\bigwedge \Sigma \rightarrow \ep)$ lies in the first-order theory of the class $\cls{K}$.
\end{remark}

Let us assume now and for the rest of the paper that $\V$ is a fixed variety of $\lang$-algebras and omit any further reference to the signature $\lang$. We denote the $\V$-free algebra on a set $\x$ of free generators by $\F(\x)$, constructed as the largest quotient of $\Tm(\x)$ that lies in $\V$, noting that both $\Tm(\x)$ and $\F(\x)$ exist even when $\x = \emptyset$, since $\lang$ contains a constant symbol by assumption. When clear from the context, we will write $\a$, $\ep$, or $\Si$ to denote also the element, pair of elements, or set of pairs of elements in $\F(\x)$ corresponding to a term $\a$, an equation $\ep$, or a set of equations $\Si$.

The fundamental connection between equational consequence and congruences on free algebras is given by the following lemma.

\begin{lemma}[{cf.~\cite{MMT14}*{Lem.~2}}]\label{lem:algcons} \
\begin{enumerate}
\item[\rm (a)]
For any set of equations $\Si(\x)$, algebra $\m{A}$, and assignment $f \colon \x \to A$,
\[
\m A, f \models \Si \iff \cg{\Tm(\x)} \Si \subseteq \ker \hat{f}.
\]
\item[\rm (b)]
For any sets of equations $\Si(\x), \De(\x)$,
\[
\Si \models_\V \De \iff \cg{\F(\x)} \De \subseteq \cg{\F(\x)} \Si.
\]
\end{enumerate}
\end{lemma}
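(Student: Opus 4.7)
The plan is to derive (a) directly from the universal property of the congruence generated by a set and the definition of $\models$, and then to leverage (a) together with the universal property of the free algebra $\F(\x)$ to obtain (b).

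For (a), the definition gives $\m A,f \models \Si$ iff $\Si \subseteq \ker\hat f$. The inverse image of the diagonal under a homomorphism is a congruence, so $\ker\hat f$ is a congruence on $\Tm(\x)$ containing $\Si$. Hence the inclusion $\Si \subseteq \ker\hat f$ is equivalent to $\cg{\Tm(\x)}\Si \subseteq \ker\hat f$, since $\cg{\Tm(\x)}\Si$ is by definition the least such congruence. This is the whole content of (a); no appeal to $\V$ is needed.

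For the forward direction of (b), the natural test algebra is the quotient $\m B := \F(\x)/\cg{\F(\x)}\Si$, which lies in $\V$ because $\V$ is closed under quotients. Let $\pi \colon \F(\x) \onto \m B$ be the projection and $q \colon \Tm(\x) \onto \F(\x)$ the canonical quotient, and consider the assignment $f \colon \x \to B$ obtained by restricting $\pi \circ q$ to $\x$, whose unique homomorphic extension is $\pi \circ q$ itself. By construction $\Si \subseteq \ker(\pi \circ q)$, so $\m B, f \models \Si$ by part (a); applying $\Si \models_\V \De$ yields $\De \subseteq \ker(\pi \circ q)$, which (after passing through $q$, using the standard identification of equations in $\Tm(\x)$ with their images in $\F(\x)$) says exactly that $\De \subseteq \cg{\F(\x)}\Si$. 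Since $\cg{\F(\x)}\De$ is the least congruence containing $\De$, the desired inclusion $\cg{\F(\x)}\De \subseteq \cg{\F(\x)}\Si$ follows.

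For the reverse direction of (b), take any $\m A \in \V$ and $f \colon \x \to A$ with $\m A, f \models \Si$. By the universal property of $\F(\x)$, $f$ extends uniquely to a homomorphism $\bar f \colon \F(\x) \to \m A$. Part (a), together with the observation that the kernel of $\bar f$ on $\F(\x)$ corresponds to the kernel of $\hat f$ on $\Tm(\x)$ modulo the defining congruence of $\F(\x)$, gives $\cg{\F(\x)}\Si \subseteq \ker\bar f$. The hypothesis then yields $\cg{\F(\x)}\De \subseteq \ker\bar f$, so $\De \subseteq \ker\bar f$ and therefore $\m A, f \models \De$.

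The only real subtlety, rather than a serious obstacle, is the notational identification of an equation $\a \eq \b$ with the pair $(\a,\b) \in \Tmc(\x)^2$ and with its image in $\F(\x)^2$ under the canonical quotient; once this convention (announced just before the lemma) is in force, both directions are essentially one-line applications of the defining adjunction between generating sets and the congruences they generate.
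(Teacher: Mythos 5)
Your proof is correct, and for part~(a) and the forward direction of~(b) it is the same argument as the paper's: (a) is immediate from the minimality of the generated congruence, and the left-to-right half of~(b) tests $\Si \mdl{\V} \De$ on the quotient $\F(\x)/\cg{\F(\x)}\Si$ with the natural assignment. The only real divergence is in the converse of~(b): the paper stays at the level of the term algebra, pulling the hypothesis back along the surjection $h \colon \Tm(\x) \onto \F(\x)$ and invoking Proposition~\ref{prop:surjective}(a) to get $\cg{\Tm(\x)}\De \subseteq \cg{\Tm(\x)}\Si \lor \ker h$, and then uses $\ker h \subseteq \ker \hat{f}$; you instead factor $\hat{f}$ as $\bar{f} \circ q$ through the free algebra, using the universal property of $\F(\x)$ with respect to algebras of $\V$ (which is exactly where your appeal to $\m{A} \in \V$ enters), and argue entirely inside $\Con \F(\x)$. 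Your route is marginally more direct, since it bypasses the closure-operator machinery of Section~\ref{subsec:congprelim}; the paper's version has the advantage of rehearsing the adjoint-lifting tools that are used throughout the rest of the paper. One small point the paper handles explicitly and you elide: the notation $\Si(\x),\De(\x)$ allows the variables actually occurring to be a proper subset of $\x$, while the definition of $\mdl{\V}$ quantifies over assignments on exactly the occurring variables, so one should note (by restricting/extending assignments) that testing with assignments $f \colon \x \to A$ is equivalent; this is harmless bookkeeping and does not affect the validity of your argument.
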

\begin{proof}
(a) Clear from the definition and the fact that $\ker \hat{f}$ is a congruence on $\Tm(\x)$.

(b) Recall first that the variables $\y$ occurring in $\Si(\x), \De(\x)$ are included in (but may not equal) the set of variables $\x$ and observe that
\[
\begin{array}{rcc}
\Si \mdl{\V} \De & \iff & \text{for every $\m{A} \in \V$ and assignment 
$f \colon \x \to A$,}\\[.1in]
& & \m A, f \models \Si \ \ \Rightarrow \ \ \m A, f \models \De.
\end{array}
\]
The left-to-right direction follows by restricting an assignment $f \colon \x \to A$ to $g \colon \y \to A$; the right-to-left direction follows by extending any assignment $g \colon \y \to A$ arbitrarily to an assignment $f \colon \x \to A$. 

Suppose now that $\Si \models_\V \De$. Consider the natural assignment $f \colon \x \to F(\x)/\cg{\F(\x)} \Si$. Clearly $\F(\x)/\cg{\F(\x)} \Si, f \models \Si$, so $\F(\x)/\cg{\F(\x)} \Si, f \models \De$. Hence, by (a), $\De \subseteq \ker \hat{f} = \cg{\F(\x)} \Si$. So also $\cg{\F(\x)} \De \subseteq \cg{\F(\x)} \Si$. 

For the converse, suppose that $\cg{\F(\x)} \De \subseteq \cg{\F(\x)} \Si$. Let $h \colon \Tm(\x) \to \F(\x)$ be the natural homomorphism. Then
\[
h^*(\cg{\Tm(\x)} \De) = \cg{\F(\x)} \De \subseteq \cg{\F(\x)} \Si = h^*(\cg{\Tm(\x)} \Si).
\]
So, by Proposition~\ref{prop:surjective}(a),
\[
\cg{\Tm(\x)} \De \subseteq c_h(\cg{\Tm(\x)} \Si) = \cg{\Tm(\x)} \Si \lor \ker h.
\]
Now consider $\m{A} \in \V$ and an assignment $f \colon \x \to A$ satisfying $\m A, f \models \Si$ and hence, by (a), $\cg{\Tm(\x)} \Si \subseteq \ker \hat{f}$. Since $\ker h \subseteq \ker \hat{f}$, 
\[
\cg{\Tm(\x)} \De \subseteq \cg{\Tm(\x)} \Si \lor \ker h \subseteq \ker \hat{f}.
\]
That is, $\m A, f \models \De$. Hence $\Si \models_\V \De$.
\end{proof}

Note also that equational consequence in $\V$ is {\em finitary}; that is, for any set of equations $\Si \cup \{\ep\}$, whenever $\Si \models_\V \ep$, there exists a finite $\Si' \subseteq \Si$ such that $\Si' \models_\V \ep$. This follows from the previous lemma and the fact that, since any congruence lattice is algebraic, $\cg{\F(\x)} \Si = \bigcup \{\cg{\F(\x)} \Si' : \Si' \subseteq \Si, \Si' \mbox{ finite}\}$.


\subsection{Deductive interpolation}\label{subsec:dedinterp}

The following property of equational consequence may be viewed as a natural generalization of Craig interpolation for classical propositional logic to varieties of algebras and has been studied in depth by a number of authors (see, in particular,~\cites{Jon65,Pig72,Mak77,Wro84a,Cze85,Ono86,Cze07,MMT14}). 

\begin{definition}
$\V$ admits {\it deductive interpolation} if for any finite sets $\x,\y,\z$ and finite set of equations $\Si(\x,\y) \cup \{\ep(\y,\z)\}$ satisfying $\Si \mdl{\V} \ep$, there exists a finite set of equations $\Pi(\y)$ such that $\Si \mdl{\V} \Pi$ and $\Pi \mdl{\V} \ep$.
\end{definition}

Below we recall some useful reformulations of this property.

\begin{proposition}\label{prop:DIPBC}
The following are equivalent:
\begin{enumerate}
\item	$\V$ admits deductive interpolation.
\item For any finite sets $\x,\y$ and finite set of equations $\Si(\x,\y)$, there exists a set of equations $\Pi(\y)$ such that for any equation $\ep(\y,\z)$,
\[ 
\Si \mdl{\V} \ep \iff \Pi \mdl{\V} \ep.
\]
\item For any finite sets $\x,\y,\z$, the following diagram commutes:
\begin{equation*}
\begin{tikzpicture}[baseline=(current bounding box.center)]
\matrix (m) [matrix of math nodes, row sep=1.5em, column sep=1.5em]{
\Con \F(\x,\y) & & \Con \F(\y) \\
&	& \\
\Con \F(\x,\y,\z) & & \Con \F(\y,\z) \\
};
\path
(m-1-1) edge[->] node[above] {$i^{-1}$} (m-1-3)
(m-1-1) edge[->] node[left] {$j^*$} (m-3-1)
(m-3-1) edge[->] node[below] {$k^{-1}$} (m-3-3)
(m-1-3) edge[->] node[right] {$l^*$} (m-3-3);
\end{tikzpicture}
\end{equation*}
where $i$, $j$, $k$, and $l$ denote the inclusion maps between corresponding finitely generated free algebras.
\end{enumerate}
\end{proposition}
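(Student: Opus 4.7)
The plan is to establish the cycle (i) $\Rightarrow$ (ii) $\Rightarrow$ (i) and the equivalence (i) $\Leftrightarrow$ (iii), leveraging the finitariness of equational consequence noted at the end of Subsection~\ref{subsec:consprelim} and the dictionary between equations and congruences provided by Lemma~\ref{lem:algcons}.

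For (i) $\Leftrightarrow$ (ii): assuming (i), I would take $\Pi(\y)$ to be the set of \emph{all} $\y$-equations $\pi$ with $\Si \mdl{\V} \pi$, so that $\Si \mdl{\V} \Pi$ holds by construction. Given $\ep(\y,\z)$ with $\Si \mdl{\V} \ep$, (i) furnishes a finite $\Pi_0(\y)$ with $\Si \mdl{\V} \Pi_0$ and $\Pi_0 \mdl{\V} \ep$; since $\Pi_0 \subseteq \Pi$, we obtain $\Pi \mdl{\V} \ep$, and the reverse implication is immediate by transitivity. Conversely, assuming (ii), whenever $\Si \mdl{\V} \ep$, we have $\Pi \mdl{\V} \ep$, and finitariness supplies a finite $\Pi_0 \subseteq \Pi$ with $\Pi_0 \mdl{\V} \ep$; instantiating (ii) with each $\pi \in \Pi_0$ in place of $\ep$ promotes the trivial $\Pi \mdl{\V} \pi$ to $\Si \mdl{\V} \pi$, whence $\Si \mdl{\V} \Pi_0$.

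For (i) $\Leftrightarrow$ (iii): the strategy is to rephrase both sides in the language of the congruence $\theta := \cg{\F(\x,\y)}\Si$ via Lemma~\ref{lem:algcons}(b). Since $i, j, k, l$ are inclusions, one checks by direct calculation that $\Si \mdl{\V} \ep(\y,\z)$ iff $\ep \in k^{-1}(j^*(\theta))$, that $\Si \mdl{\V} \Pi(\y)$ (with $\Pi \subseteq \F(\y)^2$) iff $\Pi \subseteq i^{-1}(\theta)$, and that $\Pi \mdl{\V} \ep$ iff $\ep \in l^*(\cg{\F(\y)}\Pi)$. Since $l^*$ preserves joins as a left adjoint and sends compact congruences to compact congruences, the existence of a finite $\Pi \subseteq i^{-1}(\theta)$ with $\ep \in l^*(\cg{\F(\y)}\Pi)$ is equivalent to $\ep \in l^*(i^{-1}(\theta))$. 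Deductive interpolation therefore translates to the inclusion $k^{-1} \circ j^* \leq l^* \circ i^{-1}$ on the compact congruences of $\F(\x,\y)$. The reverse inclusion is automatic: any generator $(a,b) \in \F(\y)^2$ of $l^*(i^{-1}(\theta))$ already lies in $\theta$ and a fortiori in $j^*(\theta)$, hence in $k^{-1}(j^*(\theta))$.

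To complete the implication (i) $\Rightarrow$ (iii), the equality $k^{-1} \circ j^* = l^* \circ i^{-1}$ must be extended from compact to arbitrary congruences. I would do this by a finitariness argument: given $(a,b) \in k^{-1}(j^*(\theta))$ for arbitrary $\theta$, the generated congruence $j^*(\theta)$ is compactly generated, so $(a,b)$ already belongs to $j^*(\tau)$ for some compact $\tau \leq \theta$; the compact case then yields $(a,b) \in l^*(i^{-1}(\tau)) \leq l^*(i^{-1}(\theta))$. I expect the main obstacle to be the careful bookkeeping required in matching up generated congruences across the inclusions of free algebras and correctly exchanging the existential "some finite $\Pi$" quantifier in (i) for set-theoretic membership in the composite $l^* \circ i^{-1}$.
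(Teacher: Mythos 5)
Your proposal is correct and follows essentially the same route as the paper: (i)$\Leftrightarrow$(ii) via taking $\Pi$ to be all $\y$-consequences of $\Si$ together with finitarity, and (i)$\Leftrightarrow$(iii) by translating through Lemma~\ref{lem:algcons} and using that $\theta$ is a directed join of compact congruences preserved by the liftings (the paper's claims (a) and (b)). The only difference is presentational: you verify the commutativity first on compact congruences and then extend by continuity, whereas the paper states the two translation claims for arbitrary $\theta$ directly; the content is the same.
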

\begin{proof}
We consider first the diagram in (iii). 
For any $\theta \in \Con \F(\x,\y)$, let $\Si_\theta$ denote the set of equations $\{\a(\x,\y) \approx \b(\x,\y) : (\a,\b) \in \theta\}$. We claim that, for an arbitrary equation $\ep(\y,\z)$:
\begin{enumerate}
\item[(a)] $\ep(\y,\z) \in k^{-1} j^*(\theta) \iff \Si_\theta \mdl{\V} \ep$.
\item[(b)] $\ep(\y,\z) \in l^*i^{-1}(\theta) \iff$ there exists a finite set of equations $\Pi(\y)$ such that $\Si_\theta \mdl{\V} \Pi$ and $\Pi \mdl{\V} \ep$.
\end{enumerate}
(a) follows immediately from Lemma~\ref{lem:algcons}. For (b), recall that $\theta$ is the directed join of the compact congruences below it. This join is preserved by $i^{-1}$ and $l^*$ (since it is a left adjoint). Hence $\ep(\y,\z) \in l^*i^{-1}(\theta)$ if, and only if, there is a compact $\theta' \leq \theta$ such that $\ep(\y,\z) \in l^*i^{-1}(\theta')$. (b) now follows by Lemma~\ref{lem:algcons}.

The equivalence of (i) and (iii) follows from (a) and (b). (ii) $\Rightarrow$ (i) is an easy consequence of the finitarity of the equational consequence relation, so it remains to establish (i) $\Rightarrow$ (ii). Given finite sets $\x,\y$ and a finite set of equations $\Si(\x,\y)$, let $\Pi(\y) = \{ \de(\y) : \Si \mdl{\V} \de\}$. Clearly, $\Si \mdl{\V} \Pi$. So if $\Pi \mdl{\V} \ep$, then $\Si \mdl{\V} \ep$. Conversely, if $\Si \mdl{\V} \ep$, then, by (i), there exists a finite set of equations $\Pi'(\y)$ such that $\Si \mdl{\V} \Pi'$ and $\Pi' \mdl{\V} \ep$, so $\Pi' \subseteq \Pi$ and hence also $\Pi \mdl{\V} \ep$.
\end{proof}

Deductive interpolation enjoys close relationships with familiar algebraic properties. Recall that a class of algebras $\cls{K}$ has the {\em amalgamation property} if for every 5-tuple $(\m{A},\m{B},\m{C},i,j)$ where $\m{A},\m{B},\m{C}\in \cls{K}$ and $i,j$ are embeddings of $\m{A}$ into $\m{B},\m{C}$, respectively, there exists $\m{D} \in \cls{K}$ and embeddings $h,k$ of $\m{B}, \m{C}$, respectively, into $\m{D}$ such that the compositions $hi$ and $kj$ coincide. Recall also that $\cls{K}$ has the {\em congruence extension property} if any congruence of a subalgebra of $\m{A} \in \cls{K}$ extends to $\m{A}$. We state the following theorem, which appeared first in~\cite{Jon65}, credited there to unpublished work of H.J.~Keisler. 

\begin{theorem}[cf.~\cite{MMT14}*{Thm.~22}] \label{thm:amalgamation} \
\begin{enumerate}
\item[\rm (a)]
If $\V$ has the amalgamation property, then $\V$ admits deductive interpolation. 
\item[\rm (b)]
If $\V$ admits deductive interpolation and has the congruence extension property, then $\V$ has the amalgamation property.
\end{enumerate}
\end{theorem}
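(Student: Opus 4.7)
Both parts rest on the translation between equational consequence in $\V$ and congruence generation on free algebras recorded in Lemma~\ref{lem:algcons}, together with the equivalent reformulations of deductive interpolation collected in Proposition~\ref{prop:DIPBC}, especially the infinitary form~(ii) and the diagrammatic form~(iii).

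For~(a), I would assume amalgamation and, using finitarity of $\models_\V$, aim for form~(ii) of Proposition~\ref{prop:DIPBC}: given finite $\x,\y$ and a finite set $\Si(\x,\y)$, define
\[
\Pi(\y) := \{\de(\y) : \Si \models_\V \de\}
\]
and show $\Si \models_\V \ep \iff \Pi \models_\V \ep$ for every equation $\ep(\y,\z)$. The direction $\Leftarrow$ is immediate from $\Si\models_\V\Pi$. For $\Rightarrow$, the plan is to argue by contradiction: assume $\Si\models_\V\ep$ but $\Pi\not\models_\V\ep$, and consider
\[
\m{A}_0 := \F(\y)/\cg{\F(\y)}\Pi,\quad
\m{A}_1 := \F(\x,\y)/\cg{\F(\x,\y)}\Si,\quad
\m{A}_2 := \F(\y,\z)/\cg{\F(\y,\z)}\Pi.
\]
Lemma~\ref{lem:algcons} identifies the $\y$-subalgebras of both $\m{A}_1$ and $\m{A}_2$ with $\m{A}_0$, so amalgamation produces $\m{D}\in\V$ with embeddings $h_1\colon\m{A}_1\hookrightarrow\m{D}$ and $h_2\colon\m{A}_2\hookrightarrow\m{D}$ that agree on $\m{A}_0$. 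Combining these into a single assignment of $\x\cup\y\cup\z$ in $D$, the resulting interpretation satisfies $\Si$ via $h_1$ and hence $\ep$ by hypothesis, whereas the natural assignment in $\m{A}_2$ refutes $\ep$ (by the choice of $\m{A}_2$) and this refutation is preserved by the embedding $h_2$ — yielding the desired contradiction.

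For~(b), I would assume deductive interpolation and CEP, take embeddings $i\colon\m{A}\hookrightarrow\m{B}$ and $j\colon\m{A}\hookrightarrow\m{C}$, and set up the standard diagram. Index $B\setminus i(A)$, $A$, $C\setminus j(A)$ by disjoint variable families $\x,\y,\z$, write
\[
\m{B}\cong\F(\x,\y)/\theta_\m{B},\qquad \m{C}\cong\F(\y,\z)/\theta_\m{C},
\]
with $\theta_\m{B}\cap F(\y)^2 = \theta_\m{C}\cap F(\y)^2 = \theta_\m{A}$, and take as candidate amalgam
\[
\m{D} := \F(\x,\y,\z)\big/\theta,\qquad \theta := \cg{\F(\x,\y,\z)}(\theta_\m{B}\cup\theta_\m{C}).
\]
The natural maps $\m{B},\m{C}\to\m{D}$ agree on $\m{A}$ by construction, so the remaining task is to show they are injective; by Lemma~\ref{lem:algcons} this amounts to $\theta\cap F(\x,\y)^2 = \theta_\m{B}$ (with the symmetric statement for $\m{C}$). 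Finitarity of $\models_\V$ reduces this to: whenever finite $\Si_\m{B}\subseteq\theta_\m{B}$ and $\Si_\m{C}\subseteq\theta_\m{C}$ satisfy $\Si_\m{B}\cup\Si_\m{C}\models_\V\si\approx\tau$ for some $\si,\tau\in F(\x,\y)$, then $(\si,\tau)\in\theta_\m{B}$. I would apply Proposition~\ref{prop:DIPBC}(ii) to $\Si_\m{C}$ to form the interpolant $\Pi(\y)$ collecting all $\y$-consequences of $\Si_\m{C}$; these equations hold in $\m{C}$ and use only $\y$-variables, so they hold in $\m{A}$ and hence in $\m{B}$, giving $\Pi\subseteq\theta_\m{A}\subseteq\theta_\m{B}$. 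The sub-claim $\Si_\m{B}\cup\Pi\models_\V\si\approx\tau$ then yields $(\si,\tau)\in\cg{\F(\x,\y)}(\Si_\m{B}\cup\Pi)\subseteq\theta_\m{B}$, as required.

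The hard part will be the sub-claim, which is where CEP becomes indispensable. Given an arbitrary $\V$-algebra $\m{E}$ and assignment $f$ modelling $\Si_\m{B}\cup\Pi$, CEP applied to the canonical embedding $\F(\y)/\cg{\F(\y)}\Pi \hookrightarrow \F(\y,\z)/\cg{\F(\y,\z)}\Si_\m{C}$ permits the kernel of the surjection $\F(\y)/\cg{\F(\y)}\Pi \twoheadrightarrow \la f(\y)\ra_{\m{E}}$ to be extended to a congruence on the larger algebra, thereby realising the $\y$-subalgebra of $\m{E}$ inside a $\V$-algebra in which $\Si_\m{C}$ holds; combining this with $\m{E}$ over their common $\y$-part and transferring the hypothesis $\Si_\m{B}\cup\Si_\m{C}\models_\V\si\approx\tau$ back to $\m{E}$ completes the sub-claim. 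This is the technical heart of the argument: interpolation supplies the $\y$-invariant $\Pi$ capturing the ``$\y$-shadow'' of $\Si_\m{C}$, and CEP supplies the extension that reconstructs a model of $\Si_\m{C}$ from a model of $\Pi$ — neither property alone suffices, consistently with the fact that deductive interpolation is strictly weaker than amalgamation without CEP.
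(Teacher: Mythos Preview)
The paper does not prove this theorem; it is stated with a citation to~\cite{MMT14}*{Thm.~22} (and traced to J\'onsson and Keisler). So there is no in-paper argument to compare against, and the question is simply whether your proposal stands on its own.

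Part~(a) is correct and is the standard argument.

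Part~(b) contains a genuine circularity in the verification of the sub-claim. Having used CEP to produce an algebra $\m{E}'$ that contains $\langle f(\y)\rangle_{\m{E}}$ and models $\Si_{\m{C}}$, you then propose to ``combine this with $\m{E}$ over their common $\y$-part'' and transfer $\si\approx\tau$ back to $\m{E}$. But any such combination that lets you reflect the conclusion into $\m{E}$ requires the canonical map $\m{E}\to\m{D}$ into the combined algebra to be an embedding --- and that is exactly the amalgamation property you are trying to establish. Taking the pushout does not help: without amalgamation the pushout legs need not be injective, so $\si\approx\tau$ in $\m{D}$ does not descend to $\m{E}$.

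The sub-claim is nevertheless true, and the repair is to argue syntactically rather than semantically. Apply the equational form of CEP recorded in the Remark following the theorem, with the variable partition $\z$ versus $\x\cup\y$: from $\Si_{\m{C}},\Si_{\m{B}}\models_\V\si\approx\tau$ (an $(\x,\y)$-equation), CEP yields a finite $\De(\x,\y)$ with $\Si_{\m{C}}\models_\V\De$ and $\De,\Si_{\m{B}}\models_\V\si\approx\tau$. Now apply deductive interpolation to each instance $\Si_{\m{C}}(\y,\z)\models_\V\de(\x,\y)$ with $\de\in\De$: the interpolant lives in $\y$ alone, hence lies among the consequences collected in $\Pi$, giving $\Pi\models_\V\De$ and therefore $\Pi,\Si_{\m{B}}\models_\V\si\approx\tau$. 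In this corrected form the two hypotheses act in sequence --- CEP eliminates $\z$, then deductive interpolation removes the spurious $\x$-dependence of the CEP output --- rather than being fused into a semantic step that presupposes the conclusion.
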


\begin{remark}
The congruence extension property for $\V$ is itself equivalent to the following property of equational consequence studied in~\cites{Ono86,Bac75,CP99,MMT14}: for any finite sets $\x,\y$ and finite set of equations $\Si(\x,\y) \cup \Pi(\y) \cup \{\ep(\y)\}$ satisfying $\Si,\Pi \mdl{\V} \ep$, there exists a finite set of equations $\De(\y)$ such that $\Si \mdl{\V} \De$ and $\De, \Pi \mdl{\V} \ep$ (see~\cite{MMT14}*{Thm.~20}).
\end{remark}

The combination of the amalgamation and congruence extension properties is equivalent to the following (stronger) interpolation property of equational consequence in $\V$.

\begin{definition}
$\V$ admits {\it Maehara interpolation} if for any finite sets $\x,\y,\z$ and finite sets of equations $\Ga(\x,\y), \Si(\y,\z), \De(\y,\z)$ satisfying $\Ga, \Si \mdl{\V} \De$, there exists a set of equations $\Pi(\y)$ such that $\Ga \mdl{\V} \Pi$ and $\Pi, \Si \mdl{\V} \De$.
\end{definition}

\begin{theorem}[cf.~\cite{MMT14}*{Thm.~29}] \label{thm:maehara}
The following are equivalent:
\begin{enumerate}
\item	$\V$ admits Maehara interpolation.
\item	$\V$ admits deductive interpolation and has the congruence extension property.
\item	$\V$ has the amalgamation property and the congruence extension property.
\end{enumerate}
\end{theorem}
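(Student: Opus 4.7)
The plan is to derive (ii)$\iff$(iii) immediately from Theorem~\ref{thm:amalgamation}, to obtain (i)$\Rightarrow$(ii) by two simple instantiations of Maehara interpolation, and to establish the main direction (ii)$\Rightarrow$(i) by a two-step argument that first uses the congruence extension property to shift equations from the left to the right of the consequence, and then uses deductive interpolation to eliminate the auxiliary variables. For (i)$\Rightarrow$(ii), deductive interpolation is the special case of Maehara interpolation in which the middle set of equations is empty, while the equational form of the congruence extension property recorded in the remark following Theorem~\ref{thm:amalgamation} is the special case of Maehara interpolation in which the variable set $\z$ is empty.

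For the main direction (ii)$\Rightarrow$(i), assume $\V$ admits deductive interpolation and has the congruence extension property, and fix finite sets of equations $\Ga(\x,\y)$, $\Si(\y,\z)$, $\De(\y,\z)$ with $\Ga,\Si\mdl{\V}\De$. For each $\de\in\De$ I would apply the equational form of CEP, with auxiliary variables $\x$ and common variables $\y\cup\z$, to the consequence $\Ga,\Si\mdl{\V}\de$, obtaining a finite $\De_\de(\y,\z)$ with $\Ga\mdl{\V}\De_\de$ and $\De_\de,\Si\mdl{\V}\de$. Setting $\De':=\bigcup_{\de\in\De}\De_\de$, a finite set in variables $\y,\z$, this yields $\Ga\mdl{\V}\De'$ and $\De',\Si\mdl{\V}\De$. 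Since $\Ga$ has variables $\x,\y$ and $\De'$ has variables $\y,\z$, applying deductive interpolation equation by equation to $\Ga\mdl{\V}\de'$ for $\de'\in\De'$ and taking the finite union of the resulting interpolants delivers a finite $\Pi(\y)$ with $\Ga\mdl{\V}\Pi$ and $\Pi\mdl{\V}\De'$. Transitivity of $\mdl{\V}$ then gives $\Pi,\Si\mdl{\V}\De$, as required.

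The order of the two steps is the main conceptual point that I expect to be the only real obstacle: applying deductive interpolation directly to $\Ga,\Si\mdl{\V}\De$ would produce an interpolant in the variables $\y\cup\z$ shared by the two sides rather than in $\y$ alone, because the presence of $\Si(\y,\z)$ on the left forces $\z$ into the common variable set. The role of CEP is precisely to replace $\Si$ on the left by a consequence $\De'(\y,\z)$ on the right, after which the variable partitions are correctly arranged for deductive interpolation to produce the desired $\y$-interpolant.
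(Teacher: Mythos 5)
Your proposal is correct. Note first that the paper does not actually prove Theorem~\ref{thm:maehara}: it is stated with a citation to \cite{MMT14}*{Thm.~29} (and traced to Wro\'nski and Czelakowski), so there is no in-paper proof to compare against; your argument is a sound self-contained derivation along the standard lines. The equivalence (ii)$\Leftrightarrow$(iii) is indeed immediate from Theorem~\ref{thm:amalgamation}, your instantiations for (i)$\Rightarrow$(ii) are the right ones (middle set empty gives deductive interpolation; $\z=\emptyset$ gives the equational form of the congruence extension property from the remark), and the two-step reduction for (ii)$\Rightarrow$(i) --- first using the CEP-form with auxiliary variables $\x$ and common variables $\y\cup\z$ to replace $\Si$ on the left by a consequence $\De'(\y,\z)$ on the right, then applying deductive interpolation to $\Ga\mdl{\V}\De'$ to produce $\Pi(\y)$, and concluding by cut --- is exactly the point where the variable bookkeeping matters, and you handle it correctly. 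One small remark: as defined in the paper, Maehara interpolation only asserts the existence of a (possibly infinite) set $\Pi(\y)$, whereas deductive interpolation and the equational CEP-form ask for \emph{finite} interpolants; so in (i)$\Rightarrow$(ii) you should add one line invoking finitarity of $\mdl{\V}$ (recorded at the end of Section~\ref{subsec:consprelim}) to extract a finite subset of $\Pi$ that still yields the required consequences. This is cosmetic and does not affect the correctness of your argument.
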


The preceding theorem may be traced back to~\cite{Wro84a} and~\cite{Cze85}. Note also that the combination of the amalgamation property and the congruence extension property (and hence Maehara interpolation) is equivalent to the {\em transferable injections property} (see~\cite{MMT14}*{Sec.~5} for further details and references).


\section{Right uniform interpolation}\label{sec:right}

In this section, we give a suitable syntactic definition for right uniform deductive interpolation in the variety $\V$, and show it to be equivalent to a natural algebraic property of $\V$. 

\begin{definition} A variety 
$\V$ admits {\it right uniform deductive interpolation} if for any finite sets $\x,\y$ and finite set of equations $\Si(\x,\y)$, there exists a finite set of equations $\Pi(\y)$ such that 
\begin{equation}\label{eq:unint}
\Si \mdl{\V} \Pi \ \text{ and for any equation } 
\ep(\y,\z), \ \Si \mdl{\V} \ep \ \Longrightarrow \ \Pi \mdl{\V} \ep.
\end{equation}
A finite set of equations $\Pi(\y)$ that satisfies (\ref{eq:unint}) is called a {\it right uniform interpolant} of $\Si$ with respect to $\x$. 
\end{definition}

Since any two right uniform interpolants of $\Si$ with respect to $\x$ must be equational consequences of one another in $\V$, we may also speak of {\it the} right uniform interpolant, denoting by $\exists_{\x}(\Si)$ some finite set of equations $\Pi(\y)$ satisfying (\ref{eq:unint}).

A useful equivalent way of phrasing property (\ref{eq:unint}) is as follows:

\begin{equation}\label{eq:unintalt}
\text{for any equation } \ep(\y,\z), \ \Si \mdl{\V} \ep \iff \Pi \mdl{\V} \ep.
\end{equation}

\noindent
Note in particular that using this alternative characterization of deductive interpolation, it is easy to see that right uniform deductive interpolation implies deductive interpolation.

Uniform interpolation properties are closely related to properties of the \emph{finitely presented algebras} in the variety. Recall that a \emph{finite presentation} of an algebra $\m A$ is a surjective homomorphism $p \colon \F(\x) \onto \m A$ such that $\x$ is finite and $\ker p$ is a compact congruence on $\F(\x)$. An algebra $\m A$ is called \emph{finitely presented} if it has a finite presentation.

We now state the main theorem of this section.

\begin{theorem}\label{thm:rudipalgebraic}
The following are equivalent:
\begin{enumerate}
\item $\V$ admits right uniform deductive interpolation.
\item $\V$ admits deductive interpolation, and the compact lifting of any homomorphism between finitely presented algebras in $\V$ has a right adjoint.
\end{enumerate}
\end{theorem}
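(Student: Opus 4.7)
My plan is to first reformulate right uniform deductive interpolation as a statement about the inverse image lifting $i^{-1}$ for inclusions between finitely generated free algebras, and then use this reformulation to establish both directions of the equivalence.

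The key preliminary step is to show: if $\V$ admits deductive interpolation, then for any finite set $\Si(\x,\y)$, a finite set $\Pi(\y)$ is a right uniform interpolant of $\Si$ with respect to $\x$ if and only if $\cg{\F(\y)} \Pi = i^{-1}(\cg{\F(\x,\y)} \Si)$, where $i \colon \F(\y) \into \F(\x,\y)$ is the inclusion. The forward direction follows immediately from Lemma~\ref{lem:algcons} by specialising the uniform interpolation condition to equations $\ep(\y)$ (that is, taking $\z = \emptyset$): combined with $\Si \mdl{\V} \Pi$, one obtains $\Si \mdl{\V} \ep(\y) \iff \Pi \mdl{\V} \ep(\y)$, which translates directly into the claimed equality. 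For the reverse, given $\Si \mdl{\V} \ep(\y,\z)$, Lemma~\ref{lem:algcons} gives $\cg{\F(\y,\z)} \ep \subseteq k^{-1} j^*(\cg{\F(\x,\y)} \Si)$, and the commuting square of Proposition~\ref{prop:DIPBC}(iii) rewrites this as $\cg{\F(\y,\z)} \ep \subseteq l^* i^{-1}(\cg{\F(\x,\y)} \Si) = l^*(\cg{\F(\y)} \Pi) = \cg{\F(\y,\z)} \Pi$, yielding $\Pi \mdl{\V} \ep$. The upshot is that right uniform deductive interpolation is equivalent to the conjunction of deductive interpolation and the property that $i^{-1}$ preserves compact congruences for every inclusion $i \colon \F(\y) \into \F(\x,\y)$ between finitely generated free algebras.

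For the direction (ii) $\Rightarrow$ (i), I would observe that finitely generated free algebras are finitely presented (via their identity presentations), apply (ii) to the inclusions $i \colon \F(\y) \into \F(\x,\y)$, and invoke Proposition~\ref{prop:adjcompact} to conclude that $i^{-1}$ preserves compact congruences. The reformulation and the deductive interpolation already guaranteed by (ii) then yield (i). For (i) $\Rightarrow$ (ii), deductive interpolation is immediate, and for the compact-lifting condition I would proceed as follows. Given $h \colon \m A \to \m B$ between finitely presented algebras with presentations $p_A \colon \F(\x) \onto \m A$ and $p_B \colon \F(\y) \onto \m B$ (where $\x,\y$ are finite and $\ker p_A, \ker p_B$ are compact), lift $h$ to a homomorphism $\tilde h \colon \F(\x) \to \F(\y)$ satisfying $p_B \tilde h = h p_A$, and factor it as $\tilde h = \bar h \circ e$, where $e \colon \F(\x) \into \F(\x,\y)$ is the inclusion and $\bar h \colon \F(\x,\y) \onto \F(\y)$ is the surjection sending each $x_i \mapsto \tilde h(x_i)$ and fixing the $y_j$. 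Since $\ker \bar h$ is finitely generated by $\{(x_i, \tilde h(x_i))\}_i$, hence compact, Proposition~\ref{prop:surjective}(c) gives that $\bar h^{-1}$ preserves compact congruences, while the reformulation above (with the roles of $\x$ and $\y$ exchanged) gives the same for $e^{-1}$. Therefore $\tilde h^{-1} = e^{-1} \circ \bar h^{-1}$ preserves compact congruences, and this finally transfers to $h^{-1}$ via the lattice isomorphism $p_B^{-1} \colon \Con \m B \isom [\ker p_B, \nabla_{\F(\y)}]_{\Con \F(\y)}$ of Proposition~\ref{prop:surjective}, combined with Lemma~\ref{lem:compininterval} applied to the compact element $\ker p_B$ (and similarly for $\ker p_A$).

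The main obstacle is the key reformulation above: its reverse direction is precisely where deductive interpolation enters, via Proposition~\ref{prop:DIPBC}(iii), to accommodate the auxiliary variables $\z$ in the equation $\ep(\y,\z)$. A secondary technical point in the factorization argument is that $\KCon$ is only a join-subsemilattice of $\Con$, so Lemma~\ref{lem:compininterval} is essential to reconcile compactness in the interval $[\ker p_B, \nabla_{\F(\y)}]_{\Con \F(\y)}$ with compactness in the ambient lattice $\Con \F(\y)$; without this identification, one could not transfer the compact-preservation property from $\tilde h^{-1}$ to $h^{-1}$.
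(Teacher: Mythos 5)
Your proposal is correct, and while its first half essentially coincides with the paper's route --- your ``reformulation'' is Proposition~\ref{prop:rudipintermediate} restricted to finite variable sets, with the reverse direction routed through the commuting square of Proposition~\ref{prop:DIPBC}(iii) rather than through picking syntactic interpolants --- your extension from inclusions $\F(\y)\into\F(\x,\y)$ to arbitrary homomorphisms between finitely presented algebras genuinely differs from Proposition~\ref{prop:inclusiontoall}. The paper re-presents $\m{B}$ as a quotient of $\F(\x,\y)$ (Lemma~\ref{lem:preschoice}), and to know that this new presentation has compact kernel it must first prove the Neumann-type Lemma~\ref{lem:fingen} and Corollary~\ref{cor:finpres}. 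You instead keep the given presentation $p_B\colon\F(\y)\onto\m{B}$, lift $h$ to $\tilde h\colon\F(\x)\to\F(\y)$, and factor $\tilde h$ through $\F(\x,\y)$ as the inclusion followed by the substitution-retraction $\bar h$; the only compactness input beyond Proposition~\ref{prop:surjective}(c) and Lemma~\ref{lem:compininterval} is that $\ker\bar h$ is generated by $\{(x_i,\tilde h(x_i))\}$, which you assert without proof. That assertion is true --- it is the special case of the Claim inside Lemma~\ref{lem:fingen} where the presentation is trivial --- and has a short direct argument: if $q$ is the quotient of $\F(\x,\y)$ by that finitely generated congruence and $l\colon\F(\y)\into\F(\x,\y)$ is the inclusion, then $q$ and $(q\circ l)\circ\bar h$ agree on all generators, hence coincide, so $\ker\bar h\subseteq\ker q$; you should spell this out. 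The trade-off between the two routes: yours is lighter, avoiding the full strength of Lemma~\ref{lem:fingen} (that \emph{every} finite generating set of a finitely presented algebra yields a finite presentation), while the paper's heavier Lemma~\ref{lem:fingen}, Corollary~\ref{cor:finpres} and Lemma~\ref{lem:preschoice} are reused elsewhere, e.g.\ in the proof of Theorem~\ref{thm:ludipalgebraic}, so the extra machinery pays for itself there; both arguments deliver the theorem, and your final transfer of compactness along $p_A$ and $p_B$ via Proposition~\ref{prop:surjective} and Lemma~\ref{lem:compininterval} is sound.
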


Observe that in a locally finite variety, any finitely presented algebra $\m{A}$ is finite and hence $\KCon \m{A} = \Con \m{A}$ for any finitely presented algebra $\m{A}$. So the second part of condition (ii) in Theorem~\ref{thm:rudipalgebraic} holds in any locally finite variety, and we obtain the following result.

\begin{corollary}\label{cor:rudiplocfin}
A locally finite variety admits right uniform deductive interpolation if, and only if, it admits deductive interpolation.
\end{corollary}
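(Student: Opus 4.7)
The plan is to derive this corollary directly from Theorem~\ref{thm:rudipalgebraic}. The implication from right uniform deductive interpolation to plain deductive interpolation was already observed in the discussion following the alternative characterisation~(\ref{eq:unintalt}), so the substantive task is the converse under the local finiteness hypothesis.

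First I would justify the claim preceding the corollary: in a locally finite variety $\V$, every finitely presented algebra $\m{A}$ is finite. Indeed, a finite presentation $p \colon \F(\x) \onto \m{A}$ with $\x$ finite exhibits $\m{A}$ as a finitely generated algebra in $\V$, hence as a finite algebra by local finiteness. A finite algebra has a finite congruence lattice in which every element is trivially compact, giving $\KCon \m{A} = \Con \m{A}$.

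Next I would invoke the adjunction $h^* \colon \Con \m{A} \leftrightarrows \Con \m{B} \cocolon h^{-1}$ from Definition~\ref{dfn:lifting}. For any homomorphism $h \colon \m{A} \to \m{B}$ between finitely presented algebras of $\V$, the compact lifting $h^*|_{\KCon \m{A}}$ is simply $h^*$ itself (since $\KCon \m{A} = \Con \m{A}$), and $h^{-1}$ restricts to a map $\KCon \m{B} \to \KCon \m{A}$ because $\KCon \m{B} = \Con \m{B}$. Hence $h^{-1}|_{\KCon \m{B}}$ already serves as a right adjoint to $h^*|_{\KCon \m{A}}$, and the second clause of condition~(ii) in Theorem~\ref{thm:rudipalgebraic} is automatic. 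The equivalence (i)~$\liff$~(ii) of that theorem therefore collapses to the biconditional claimed by the corollary.

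I do not anticipate any substantive obstacle: the corollary is essentially the observation that the compact/full congruence distinction disappears for finitely presented algebras in the locally finite setting, which trivialises the adjoint-preservation hypothesis of Theorem~\ref{thm:rudipalgebraic}.
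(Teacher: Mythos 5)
Your proposal is correct and follows exactly the paper's route: the paper likewise observes that in a locally finite variety every finitely presented algebra is finite, so $\KCon \m{A} = \Con \m{A}$, making the right-adjoint clause of condition (ii) in Theorem~\ref{thm:rudipalgebraic} automatic (via $h^{-1}$ itself), while the forward implication is the already-noted consequence of~(\ref{eq:unintalt}). No gaps.
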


\begin{example}[Varieties generated by Heyting chains] \label{example:Heytingchains}
Any variety generated by a Heyting chain is locally finite, and admits deductive interpolation and hence also right uniform deduction interpolation if, and only if, the chain has one, two, three, or infinitely many elements~\cite{Mak77}. 
\end{example}

The proof of Theorem~\ref{thm:rudipalgebraic} will be split into two steps:\label{page:steps}

\begin{enumerate}
\item[(I)] Establish an equivalence between right uniform deductive interpolation and a property of congruences on free algebras (Proposition~\ref{prop:rudipintermediate}). 
\item[(II)] Show that the property of congruences on free algebras of step (I) extends to a property of all finitely presented algebras (Proposition~\ref{prop:inclusiontoall}).
\end{enumerate}

\noindent
Proving Propositions~\ref{prop:rudipintermediate} and \ref{prop:inclusiontoall}, from which Theorem~\ref{thm:rudipalgebraic} follows directly, will occupy us for the rest of this section. In the first of these propositions below, the equivalence of (i) and (iii) is required for Theorem~\ref{thm:rudipalgebraic}. The equivalent statement (ii) provides an interesting alternative characterization of right uniform deductive interpolation that involves only compact liftings of homomorphisms between (possibly infinitely generated) free algebras.

\begin{proposition}\label{prop:rudipintermediate}
The following are equivalent:
\begin{enumerate}
\item	$\V$ admits right uniform deductive interpolation. 
\item For any sets $\x,\y$, the compact lifting of the inclusion homomorphism $i \colon \F(\y) \into \F(\x,\y)$ has a right adjoint, $i^{-1}$. 
\item $\V$ admits deductive interpolation, and, for any finite sets $\x,\y$, the compact lifting of the inclusion homomorphism $i \colon \F(\y) \into \F(\x,\y)$ has a right adjoint, $i^{-1}$.
\end{enumerate}
\end{proposition}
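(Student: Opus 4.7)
I will prove the equivalence via the cycle (iii) $\Rightarrow$ (i) $\Rightarrow$ (ii) $\Rightarrow$ (iii). The key observation, mediated by Lemma~\ref{lem:algcons}, is that a finite right uniform interpolant of $\Si(\x,\y)$ with respect to $\x$ corresponds precisely to a finite generating set for the compact congruence $i^{-1}(\cg{\F(\x,\y)}\Si)$ in $\F(\y)$.

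For (iii) $\Rightarrow$ (i): given a finite $\Si(\x,\y)$, the finite compact-preservation in (iii) yields a finite $\Pi(\y)$ with $i^{-1}(\cg{\F(\x,\y)}\Si) = \cg{\F(\y)}\Pi$. Then $\Pi \subseteq \cg{\F(\x,\y)}\Si$ gives $\Si \mdl{\V} \Pi$, and for any $\ep(\y,\z)$ with $\Si \mdl{\V} \ep$, deductive interpolation yields a finite $\Xi(\y)$ with $\Si \mdl{\V} \Xi \mdl{\V} \ep$; since $\Xi \subseteq i^{-1}(\cg{\F(\x,\y)}\Si) = \cg{\F(\y)}\Pi$, we obtain $\Pi \mdl{\V} \Xi \mdl{\V} \ep$.

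For (i) $\Rightarrow$ (ii): given arbitrary $\x, \y$ and a compact $\theta = \cg{\F(\x,\y)}\Si$ with $\Si$ finite, take finite $\x_0 \subseteq \x$ and $\y_0 \subseteq \y$ containing the variables of $\Si$, and let $\Pi_0$ be the right uniform interpolant of $\Si$ with respect to $\x_0$ provided by (i). The inclusion $\cg{\F(\y)}\Pi_0 \subseteq i^{-1}(\theta)$ follows from $\Si \mdl{\V} \Pi_0$ and $\Pi_0 \subseteq F(\y_0)^2$. For the reverse inclusion, given $(a,a') \in i^{-1}(\theta)$, the equation $a \approx a'$ involves finitely many $\y$-variables, which split into $\y_0$-variables and a finite disjoint $\z$; right uniform interpolation (with this $\z$) applied to $\Si \mdl{\V} a \approx a'$ then gives $\Pi_0 \mdl{\V} a \approx a'$, i.e., $(a,a') \in \cg{\F(\y)}\Pi_0$.

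For (ii) $\Rightarrow$ (iii): part (b) is immediate from (ii). The main obstacle is part (a), deductive interpolation, whose proof requires the infinite-set freedom of (ii). Given $\Si(\x,\y) \mdl{\V} \ep(\y,\z)$ finite, apply (ii) with $\x,\y$ to get a finite $\Pi(\y)$ with $\cg{\F(\y)}\Pi = i^{-1}(\cg{\F(\x,\y)}\Si)$, so $\Si \mdl{\V} \Pi$; the claim is $\Pi \mdl{\V} \ep$. Choose an infinite $\y^\infty \supseteq \y$ disjoint from $\x \cup \z$ and apply (ii) to $\x,\y^\infty$, obtaining a finite $\Pi^\infty \subseteq F(\y^\infty)^2$ with $\cg{\F(\y^\infty)}\Pi^\infty = (i^\infty)^{-1}(\cg{\F(\x,\y^\infty)}\Si)$. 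Pick distinct $v_1, \ldots, v_n \in \y^\infty \setminus \y$ outside the (finitely many) variables of $\Pi^\infty$ and consider $\tilde\sigma \colon \F(\x,\y,\z) \to \F(\x,\y^\infty)$ fixing $\x \cup \y$ and sending $z_i \mapsto v_i$. Since $\tilde\sigma$ fixes $\Si$, the hypothesis $\Si \mdl{\V} \ep$ yields $(\tilde\sigma(e), \tilde\sigma(e')) \in \cg{\F(\x,\y^\infty)}\Si$, and since $\tilde\sigma(e), \tilde\sigma(e') \in F(\y^\infty)$, also $(\tilde\sigma(e), \tilde\sigma(e')) \in \cg{\F(\y^\infty)}\Pi^\infty$. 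Now let $\phi \colon \F(\y^\infty) \to \F(\y,\z)$ fix $\y$, send $v_i \mapsto z_i$, and send the remaining variables of $\y^\infty \setminus \y$ to a fixed constant of $\lang$; applying $\phi$ recovers $(e,e') \in \cg{\F(\y,\z)}\phi(\Pi^\infty)$. By the choice of the $v_i$, every pair of $\phi(\Pi^\infty)$ lies in $F(\y)^2$; extending $\phi$ to a homomorphism $\F(\x,\y^\infty) \to \F(\x,\y,\z)$ by the identity on $\x$ (which still fixes $\Si$) shows that $\Si \mdl{\V} \phi(p) \approx \phi(p')$ for each $(p,p') \in \Pi^\infty$, whence $\phi(\Pi^\infty) \subseteq i^{-1}(\cg{\F(\x,\y)}\Si) = \cg{\F(\y)}\Pi$. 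Therefore $(e,e') \in \cg{\F(\y,\z)}\Pi$, proving $\Pi \mdl{\V} \ep$ and completing the cycle.
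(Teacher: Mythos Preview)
Your proof is correct and follows essentially the same route as the paper: the implications (iii) $\Rightarrow$ (i) and (i) $\Rightarrow$ (ii) match the paper's arguments (your treatment of (i) $\Rightarrow$ (ii) is in fact a bit more careful about passing to finite subsets $\x_0,\y_0$), and the substance of (ii) $\Rightarrow$ (iii) is the same substitution trick exploiting the infinite-set hypothesis.

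The only noteworthy difference is organizational, in (ii) $\Rightarrow$ (iii). The paper applies (ii) \emph{once}, to the inclusion $\F(\y,\w) \into \F(\x,\y,\w)$ with $\w$ infinite, obtains a generator $\De(\y,\uvar)$ for the inverse image (with $\uvar \subseteq \w$ finite), and then \emph{defines} $\Pi := \si(\De)$ via a substitution sending $\uvar$ into $\Tmc(\y)$. You instead apply (ii) \emph{twice}: first with finite $\x,\y$ to obtain the canonical candidate $\Pi$ generating $i^{-1}(\cg{\F(\x,\y)}\Si)$, and then with an infinite $\y^\infty \supseteq \y$ to produce $\Pi^\infty$, which you use (after substituting its extraneous variables to a constant) to verify that $\Pi \mdl{\V} \ep$. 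Your route has the pleasant feature that $\Pi$ is the natural object from the start, at the cost of a slightly longer verification; the paper's route is shorter but produces a less canonical $\Pi$. Both rest on the same idea.
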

\begin{proof}
(i) $\Rightarrow$ (ii). Suppose that $\V$ has right uniform deductive interpolation and consider any sets $\x,\y$ and $\theta \in \KCon \F(\x,\y)$. Pick a finite set of equations $\Si(\x,\y)$ that generates $\theta$. By right uniform deductive interpolation, we obtain a finite set of equations $\Pi(\y)$ such that  $\Si \mdl{\V} \ep$ if, and only if, $\Pi \mdl{\V} \ep$ for any equation $\ep(\y,\z)$. By Lemma~\ref{lem:algcons}, for any equation $\ep(\y)$, we have that $\Si \mdl{\V} \ep$ is equivalent to $i(\ep) \in \cg{\F(\x,\y)}(\Si) = \theta$, and $\Pi \mdl{\V} \ep$ is equivalent to $\ep \in \cg{\F(\y)}(\Pi)$. So $\cg{\F(\y)}(\Pi) = i^{-1}(\theta)$ and the congruence $i^{-1}(\theta)$ is compact, as required.

(ii) $\Rightarrow$ (iii). Suppose that (ii) holds. Clearly, to establish (iii), it suffices to prove that $\V$ admits deductive interpolation. Consider disjoint finite sets $\x$ and $\y$ and a finite set of equations $\Si(\x,\y)$. Let $\w$ be an infinite set disjoint to $\x$ and $\y$ and let $\theta := \cg{\F(\x,\y,\w)}(\Si)$. By assumption, the compact lifting of the inclusion homomorphism $i \colon \F(\y,\w) \into \F(\x,\y,\w)$ has a right adjoint $i^{-1}$. Let $\De(\y,\uvar)$ be a finite set of equations that generates $i^{-1}(\theta)$ for some finite $\uvar \subseteq \w$. Since $\w$ is infinite and $\uvar$ is finite, we may identify $\z$ in the statement of deduction interpolation using condition~\eqref{eq:unintalt} with the infinite set $\w \setminus \uvar$. Let $\si \colon \Tm(\x,\y,\w) \to \Tm(\x,\y,\z)$ be any substitution acting as the identity on $\Tmc(\x,\y,\z)$ and satisfying $\si(\Tmc(\uvar)) \subseteq \Tmc(\y)$. We define $\Pi(\y) := \si(\De)$. 

Observe now that for any equation $\ep(\y,\w)$,
\[
i(\ep) \in \cg{\F(\x,\y,\w)}(\Si) = \theta \iff \ep \in i^{-1}(\theta) = \cg{\F(\y,\w)}(\De)
\]
and hence, using Lemma~\ref{lem:algcons},
\[
\Si \mdl{\V} \ep \iff \De \mdl{\V} \ep.
\]
In particular, $\Si \mdl{\V} \De$. Consider an equation $\ep(\y,\z)$ such that $\Si \mdl{\V} \ep$. Then $\De \mdl{\V} \ep$, and so $\si(\De) \mdl{\V} \si(\ep)$, that is, $\Pi \mdl{\V} \ep$. Suppose, conversely, that $\Pi \mdl{\V} \ep$. Since $\Si \mdl{\V} \De$, it follows that $\si(\Si) \mdl{\V} \si(\De)$, that is, $\Si \mdl{\V} \Pi$ and, combining these consequences, $\Si \mdl{\V} \ep$. So $\V$ admits deductive interpolation.

(iii) $\Rightarrow$ (i). Let $\x,\y$ be finite sets and $\Si(\x,\y)$ a finite set of equations. Define $\theta = \cg{\F(\x,\y)}(\Si)$. Using (iii), the compact lifting of the inclusion homomorphism $i \colon \F(\y) \into \F(\x,\y)$ has a right adjoint $i^{-1}$. Let $\Pi(\y)$ be a finite set of equations that generates $i^{-1}(\theta)$. Now consider any equation $\ep(\y,\z)$. If $\Si \mdl{\V} \ep$, then, using deductive interpolation, pick a finite set of equations $\De(\y)$ such that $\Si \mdl{\V} \De$ and $\De \mdl{\V} \ep$. Since only variables from $\y$ occur in $\De$ and $i^{-1}(\cg{\F(\x,\y)}(\Si)) = \cg{\F(\x)}(\Pi)$, we have $\Pi \mdl{\V} \De$. Hence $\Pi \mdl{\V} \ep$.
\end{proof}

\begin{example}[Groups]
The variety of groups has the amalgamation property and therefore admits deductive interpolation, but it does not admit right uniform deductive interpolation. It is not the case that for any finite sets $\x,\y$, the compact lifting of the inclusion homomorphism $i \colon \F(\y) \into \F(\x,\y)$ has a right adjoint. That is, there exists $\theta \in \KCon \F(\x,\y)$ such that $\theta \cap \F(\y)^2$ is not finitely generated. Indeed, let $p \colon \F(\y) \onto \m{G}$ be a recursive presentation of a finitely generated group $\m{G}$ which is not finitely presentable. (For example, take $\y = \{a,b\}$, and $\m{G} = \F(\y)/\gen{R}$, where $R$ is the relation $\{(a^{-n}ba^nb,ba^{-n}ba^n) : n \in \mathbb{N}\}$.) By Higman's embedding theorem, there exists a finitely presented group $\m{H}$ and an embedding $j \colon \m{G} \into \m{H}$. Choose a finite generating set $A \subseteq H$ which contains $jp(y)$ for every $y \in \y$. Let $\x$ be a finite set of variables disjoint from $\y$ such that there is a bijection $\x \cup \y \to A$, and let $q \colon \F(\x,\y) \onto \m{H}$ be the unique homomorphism extending this bijection. By Lemma~\ref{lem:fingen} below, $\theta := \ker q$ is compact, since $H$ is finitely presented. But $\theta \cap \F(\y)^2 = \ker p$, which is not compact by assumption.\footnote{This example is based on an argument communicated to us by M. Sapir.} 
\end{example}

\begin{example}[Lattice-ordered abelian groups and MV-algebras]\label{exa:ablgpsandmv}
We claim first that the variety $\mathcal{LA}$ of lattice-ordered abelian groups, generated as a quasivariety by $\langle \mathbb{R},\land,\lor,+,-,0 \rangle$ (cf. \cite{AF88}*{Lemma~6.2}), admits right uniform deductive interpolation. It suffices to show that a finite set of equations $\Si(x,\y)$ has a right uniform interpolant with respect to $x$. Moreover, writing $\a \le \b$ for $\a \land \b \eq \a$, we may assume (with a little work, omitted here) that $\Si$ consists of inequations $0 \le \a_i + nx$ ($i \in I$), $0 \le \b_j - nx$ ($j \in J$), and $0 \le \ga_k$ ($k \in K$) for some $n \ge 1$, finite sets $I,J,K$, and terms $\a_i,\b_j,\ga_k$ that do not contain $x$. A right uniform interpolant $\Pi(\y)$ then consists of the inequations $0 \le \a_i + \b_j$ ($i \in I, j \in J$) and $0 \le \ga_k$ ($k \in K$). Since $\mathcal{LA}$ also has the congruence extension property, this variety has the amalgamation property. 

Let us consider now the variety $\mathcal{MV}$ of MV-algebras for a signature $\lang_\mathcal{MV}$, generated as a quasivariety by $\m{[0,1]} = \langle [0,1],\oplus,\lnot,0 \rangle$, where $a \oplus b = \min(1,a+b)$ and $\lnot a = 1-a$. We recall that further operations may be defined as $1 := \lnot 0$, $a \odot b := \lnot (\lnot a \oplus \lnot b)$, $a \lor b := \lnot (\lnot a \oplus b) \oplus b$, and $a \land b := \lnot (\lnot a \lor \lnot b)$. Consider the signature $\lang_\mathcal{A}$ of lattice-ordered abelian groups with an additional constant $1$ and the $\lang_\mathcal{A}$-algebra $\m{R} = \langle \mathbb{R},\land,\lor,+,-,0,1 \rangle$. It follows from McNaughton's representation theorem (or see~\cite{MMT14}*{Sec.~6} for a direct proof) that (i) the interpretation of any $\lang_\mathcal{MV}$-term $\b$ in $\m{[0,1]}$ is equivalent to the interpretation of some $\lang_\mathcal{A}$-term $(\a \land 0) \lor 1$ in $\m{R}$, and, conversely, (ii) the interpretation of any $\lang_\mathcal{A}$-term $(\a \land 0) \lor 1$ in $\m{R}$ is equivalent to the interpretation of some $\lang_\mathcal{MV}$-term $\b$ in $\m{[0,1]}$. Right uniform deductive interpolation for $\mathcal{MV}$ may then be established as in the case of lattice-ordered abelian groups described above. Again, since $\mathcal{MV}$ also has the congruence extension property, this variety has the amalgamation property. 

Further details of these arguments, without explicit mention of uniform interpolation, may be found in~\cite{MMT14}*{Sec.~6}. Let us also remark that the representation of finitely presented MV-algebras via rational polyhedra can be used to provide a geometric argument for uniform interpolation (see~\cite{Mun11}).
\end{example}

\begin{proposition}\label{prop:inclusiontoall}
The following are equivalent:
\begin{enumerate}
\item For any finite sets $\x,\y$, the compact lifting of the inclusion homomorphism $i \colon \F(\x) \into \F(\x,\y)$ has a right adjoint.
\item For any finitely presented algebras $\m A$, $\m B$ in $\V$, the compact lifting of any homomorphism $f \colon \m A \to \m B$ has a right adjoint.
\end{enumerate}
\end{proposition}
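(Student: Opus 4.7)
The implication (ii) $\Rightarrow$ (i) is immediate, since for any finite sets $\x,\y$ the algebras $\F(\x)$ and $\F(\x,\y)$ are finitely presented. So the work lies in proving (i) $\Rightarrow$ (ii). My strategy is to reduce an arbitrary homomorphism $f \colon \m A \to \m B$ between finitely presented algebras to the inclusion case of (i) by building a commuting square whose vertical maps are surjections from finitely generated free algebras with compact kernel.

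Concretely, I would choose finite presentations $p \colon \F(\x) \onto \m A$ and $q \colon \F(\y) \onto \m B$ with $\x,\y$ disjoint, pick for each $x \in \x$ a term $t_x(\y) \in F(\y)$ satisfying $q(t_x) = f(p(x))$, and define $r \colon \F(\x,\y) \to \m B$ by $r(x) := q(t_x)$ and $r(y) := q(y)$. By construction $r$ is surjective and $r \circ j = f \circ p$, where $j \colon \F(\x) \into \F(\x,\y)$ is the inclusion. The crucial point is that $\ker r$ is compact: one can check directly that it is generated by a finite generating set of $\ker q$ (pushed forward to $\F(\x,\y)$) together with the finite set $\{(x,t_x) : x \in \x\}$, since in $\F(\x,\y)/\ker r$ each generator $x$ equals its image $t_x$, so every term is congruent to a term in the $\y$ variables alone.

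Having this square, I would trace a given $\phi \in \KCon \m B$ through it as follows. Proposition~\ref{prop:surjective}(c), applied to $r$ with compact kernel, gives $r^{-1}(\phi) \in \KCon \F(\x,\y)$. Hypothesis (i) applied to the inclusion $j$ then yields $j^{-1}(r^{-1}(\phi)) \in \KCon \F(\x)$. Since $r \circ j = f \circ p$ we have $j^{-1} \circ r^{-1} = p^{-1} \circ f^{-1}$, so $p^{-1}(f^{-1}(\phi))$ is compact in $\Con \F(\x)$. Finally, $p$ is surjective, hence $p^* \circ p^{-1} = \mathrm{id}_{\Con \m A}$, and applying the compact-preserving map $p^*$ recovers $f^{-1}(\phi) = p^*(p^{-1}(f^{-1}(\phi))) \in \KCon \m A$. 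Thus $f^{-1}$ preserves compactness, and Proposition~\ref{prop:adjcompact} supplies the required right adjoint to $f^*|_{\KCon \m A}$.

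The only nontrivial step is the construction of the square, and in particular the verification that $\ker r$ is compact; everything else is a formal bookkeeping exercise chaining together Propositions~\ref{prop:surjective} and~\ref{prop:adjcompact}. One could alternatively invoke the general fact (appearing as Lemma~\ref{lem:fingen} later in the paper) that any surjection onto a finitely presented algebra from a finitely generated free algebra has compact kernel, which would handle the compactness of $\ker r$ in one line.
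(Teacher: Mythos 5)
Your proposal is correct and follows essentially the same route as the paper: the commuting square you build is exactly the paper's Lemma~\ref{lem:preschoice}, your direct verification that $\ker r$ is compact reproves the relevant case of Lemma~\ref{lem:fingen}/Corollary~\ref{cor:finpres} (which you also note could be cited instead), and the final chase through $r^{-1}$, $j^{-1}$, $p^{*}$ and Proposition~\ref{prop:adjcompact} matches the paper's argument, differing only in that you invoke Proposition~\ref{prop:surjective}(c) where the paper routes through Corollary~\ref{cor:finpres}.
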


The key point in proving (i) $\Rightarrow$ (ii) in Proposition~\ref{prop:inclusiontoall} will be to show, in Lemma~\ref{lem:preschoice}, that it is always possible to choose suitable finite presentations of the finitely presented algebras $\m{A}$ and $\m{B}$. Before we can do this, we need the following preliminary lemma, which shows that, in a finitely presented algebra, we can arbitrarily choose a finite set of generators, and always obtain a finite presentation. The proof generalizes an argument from group theory due to B.H.~Neumann, cf., e.g.,~\cite{Rob95}*{Thm.~2.2.3}. These results are also closely related to \cite{Coh81}*{Theorem~III.8.4}. We give direct algebraic proofs.

\begin{lemma}\label{lem:fingen}
Let $\m{A}$ be a finitely presented algebra in $\V$. For any finite set $\x$ and surjective homomorphism $f \colon \F(\x) \onto \m{A}$, the congruence $\ker f$ is compact.
\end{lemma}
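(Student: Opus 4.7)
The plan is to compare an arbitrary surjection $f \colon \F(\x) \onto \m{A}$ with a fixed finite presentation. By hypothesis, there exist a finite set $\y$ and a surjective homomorphism $p \colon \F(\y) \onto \m{A}$ whose kernel is compact. Using surjectivity of $f$ on each generator $y \in \y$, I would pick a term $s(y) \in \F(\x)$ with $f(s(y)) = p(y)$, extending this choice to a homomorphism $s \colon \F(\y) \to \F(\x)$ satisfying $f \circ s = p$. Symmetrically, using surjectivity of $p$ on each generator $x \in \x$, I would pick a homomorphism $t \colon \F(\x) \to \F(\y)$ with $p \circ t = f$. The two equations combine to give $f \circ s \circ t = f$, so every pair $(s(t(x)), x)$ with $x \in \x$ lies in $\ker f$.

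Next, I would define the candidate compact congruence
\[
\theta := \cg{\F(\x)}\bigl(s^*(\ker p) \cup \{(s(t(x)),x) : x \in \x\}\bigr).
\]
Since $\ker p$ is compact and $s^*$ preserves compact congruences (as the direct image lifting does in general), and since $\x$ is finite, $\theta$ is indeed compact, being generated by a finite set of pairs. The inclusion $\theta \subseteq \ker f$ is immediate from the choices above: elements of $s^*(\ker p)$ are identified by $f$ because $f \circ s = p$, and the additional generators lie in $\ker f$ by construction.

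The main content of the argument is the reverse inclusion $\ker f \subseteq \theta$. The key observation is that, modulo $\theta$, the endomorphism $s \circ t$ of $\F(\x)$ agrees with the identity on every generator, hence on every element; formally, one proves $(a, s(t(a))) \in \theta$ by induction on the complexity of $a \in \F(\x)$, using that $\theta$ is a congruence. Now if $(a,b) \in \ker f$, then applying $t$ gives $p(t(a)) = f(a) = f(b) = p(t(b))$, so $(t(a), t(b)) \in \ker p$ and therefore $(s(t(a)), s(t(b))) \in s^*(\ker p) \subseteq \theta$. Combining with $(a, s(t(a))), (b, s(t(b))) \in \theta$ and transitivity yields $(a,b) \in \theta$.

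The step I expect to be the main technical obstacle is verifying the ``induction on complexity'' claim cleanly, i.e.\ that the set $\{a \in \F(\x) : (a, s(t(a))) \in \theta\}$ is a subalgebra of $\F(\x)$; this amounts to the routine but essential remark that if each $(a_i, s(t(a_i))) \in \theta$, then $(f(a_1, \dots, a_n), s(t(f(a_1, \dots, a_n)))) \in \theta$ because $s \circ t$ is a homomorphism and $\theta$ is compatible with the operations. Once this is in place, the conclusion $\ker f = \theta \in \KCon \F(\x)$ follows.
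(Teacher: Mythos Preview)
Your proof is correct and follows essentially the same approach as the paper: both construct the same candidate compact congruence (the one generated by the $s$-image of a finite generating set for $\ker p$ together with the pairs $(x,s(t(x)))$) and show it equals $\ker f$. The only difference is stylistic---for the reverse inclusion $\ker f \subseteq \theta$, the paper factors the quotient map $\F(\x) \onto \F(\x)/\theta$ through $f$ via the homomorphism theorem, whereas you argue directly that $s \circ t$ agrees with the identity modulo $\theta$ and then chase elements.
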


\begin{proof}
Pick a finite presentation $g \colon \F(\y) \onto \m{A}$ of $\m{A}$, and a finite set $\Pi(\y)$ of generators for $\ker g$. Since $g$ is surjective, we may choose a substitution $t \colon \F(\x) \to \F(\y)$ such that $g \circ t = f$, and, as $f$ is surjective, also a substitution $s \colon \F(\y) \to \F(\x)$ such that $f \circ s = g$.

{\bf Claim.} The finite set $\{(s(\a),s(\b)) : (\a,\b) \in \Pi\} \cup \{(x,st(x)) : x \text{ in } \x\}$ generates $\ker f$.

{\it Proof of Claim.} Denote by $\psi$ the congruence generated by the set in the claim.
Clearly, $\psi \subseteq \ker f$.
For the other inclusion, let $\m{B} := \F(\x)/\psi$ and denote by $p \colon \F(\x) \onto \m{B}$ the natural quotient map.
Let $h \colon \F(\y) \to \m{B}$ be the homomorphism $p \circ s$.
Note that, for each $(\a,\b) \in \Pi$, 
\[ 
h(\a) = p(s(\a)) = p(s(\b)) = h(\b).
\]
Hence $\ker g \subseteq \ker h$, since $\Pi$ generates $\ker g$. The general homomorphism theorem then implies that there exists a unique homomorphism $k \colon\m{A}\to \m{B}$ such that $k \circ g = h$.
For each $x$ in $\x$, 
\[
p(x) = p(s(t(x)) = h(t(x)) = k(g(t(x))) = k(f(x)).
\]
Since both $p$ and $k \circ f$ are homomorphisms from $\F(\x)$ to $\m{B}$, it now follows that $p = k \circ f$. Hence, $\ker f \subseteq \ker p = \psi$.
\end{proof}

We may now easily conclude the following results.

\begin{corollary}\label{cor:finpres}
Let $\m{A}$ be a finitely presented algebra in $\V$ and let $\theta$ be a congruence on $\m{A}$ such that $\m{A}/{\theta}$ is finitely presented. Then $\theta$ is compact.
\end{corollary}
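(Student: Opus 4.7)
The plan is to pull $\theta$ back along a finite presentation of $\m{A}$ in order to reduce the problem to a congruence on a finitely generated free algebra, where Lemma~\ref{lem:fingen} applies directly, and then to transfer the resulting compactness back to $\theta$.

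Concretely, I would fix any finite presentation $p \colon \F(\x) \onto \m{A}$ of $\m{A}$, and let $q \colon \m{A} \onto \m{A}/{\theta}$ denote the natural quotient. The composite $q \circ p \colon \F(\x) \onto \m{A}/{\theta}$ is then a surjective homomorphism from a finitely generated free algebra onto the finitely presented algebra $\m{A}/{\theta}$, so Lemma~\ref{lem:fingen} gives that $\ker(q \circ p)$ is compact in $\Con \F(\x)$. But $\ker(q \circ p) = p^{-1}(\theta)$, so $p^{-1}(\theta)$ is compact in $\Con \F(\x)$.

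The remaining task is to deduce compactness of $\theta$ itself from compactness of $p^{-1}(\theta)$. For this I would invoke the fact (implicit already in the proof of Proposition~\ref{prop:surjective}(c)) that, since $p$ is surjective, $p^{-1}$ restricts to a lattice isomorphism $\Con \m{A} \xrightarrow{\sim} [\ker p, \nabla_{\F(\x)}]_{\Con \F(\x)}$. Since $\ker p$ is compact and $p^{-1}(\theta) \geq \ker p$, Lemma~\ref{lem:compininterval} tells me that $p^{-1}(\theta)$ is also compact when viewed inside the interval $[\ker p, \nabla_{\F(\x)}]$. Because any lattice isomorphism preserves and reflects compactness, it follows that $\theta$ is compact in $\Con \m{A}$.

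I do not expect any substantive obstacle: the proof is essentially a two-line application of Lemma~\ref{lem:fingen} to $q \circ p$, packaged with the standard correspondence between congruences of $\m{A}$ and congruences of $\F(\x)$ above $\ker p$. The only point that requires a moment's care is the transfer of compactness between $\Con \F(\x)$ and the interval $[\ker p, \nabla_{\F(\x)}]$, and this is precisely what Lemma~\ref{lem:compininterval} was designed to handle.
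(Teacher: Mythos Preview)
Your proof is correct and follows essentially the same approach as the paper: pull $\theta$ back along a finite presentation, apply Lemma~\ref{lem:fingen} to the composite surjection onto $\m{A}/\theta$, and then transfer compactness back to $\theta$. The only minor difference is in this last step: the paper simply notes that $p^*(p^{-1}(\theta)) = \theta$ for surjective $p$ and that $p^*$ always preserves compactness, whereas you invoke the interval isomorphism together with Lemma~\ref{lem:compininterval}; both arguments are equally valid.
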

\begin{proof}
Let $f \colon \F(\x) \onto \m{A}$ be a finite presentation of $\m{A}$. Let $p$ be the quotient map $\m{A} \onto \m{A}/{\theta}$ and let $g := p \circ f$. By Lemma~\ref{lem:fingen}, $f^{-1}(\theta) = \ker g$ is a compact congruence. Since $f$ is surjective, we have $f^{*}(f^{-1}(\theta)) = \theta$. Hence, since $f^*$ preserves compact congruences, $\theta$ is compact.
\end{proof}

\begin{lemma}\label{lem:preschoice}
Let $f \colon\m{A}\to \m{B}$ be a homomorphism between finitely presented algebras in $\V$. Then there exist finite presentations ${p_A \colon \F(\x) \onto \m{A}}$ and ${p_B \colon \F(\x,\y) \onto \m{B}}$ such that $p_B \circ i = f \circ p_A$, where $i \colon \F(\x) \into \F(\x,\y)$ is the inclusion homomorphism.
\end{lemma}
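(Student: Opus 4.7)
The plan is to start from any finite presentation $p_A \colon \F(\x) \onto \m{A}$, which exists by hypothesis, and then build $p_B$ by adjoining enough free generators to $\x$ to cover a generating set of $\m{B}$. Concretely, since $\m{B}$ is finitely presented it is in particular finitely generated, so one can fix a finite generating set $\{b_1,\dots,b_n\} \subseteq B$ and take $\y = \{y_1,\dots,y_n\}$, a set of variables disjoint from $\x$, in bijection with this list.

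Next I would define $p_B \colon \F(\x,\y) \to \m{B}$ to be the unique homomorphism extending the assignment
\[
x \;\mapsto\; f(p_A(x)) \quad (x \in \x), \qquad y_i \;\mapsto\; b_i \quad (1 \le i \le n).
\]
The map $p_B$ is surjective because its image contains the generating set $\{b_1,\dots,b_n\}$, and the commutativity $p_B \circ i = f \circ p_A$ holds on the generators $\x$ of $\F(\x)$ by construction, hence on all of $\F(\x)$ since both sides are homomorphisms.

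It then remains to check that $p_B$ is a \emph{finite} presentation, i.e., that $\ker p_B$ is compact. This is where the previous lemma is used: $p_B$ is a surjective homomorphism from $\F(\x,\y)$ (free on a finite set) onto the finitely presented algebra $\m{B}$, so Lemma~\ref{lem:fingen} applies directly and yields that $\ker p_B \in \KCon \F(\x,\y)$. No step here looks problematic; the only mildly subtle point is remembering that ``finitely presented'' is invariant under change of finite generating set, and this is precisely what Lemma~\ref{lem:fingen} supplies. Hence the commutative square
\[
\begin{array}{ccc}
\F(\x) & \xhookrightarrow{\;i\;} & \F(\x,\y) \\
p_A \downarrow & & \downarrow p_B \\
\m{A} & \xrightarrow{\;f\;} & \m{B}
\end{array}
\]
has the required properties, completing the argument.
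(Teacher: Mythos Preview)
Your proof is correct and follows essentially the same approach as the paper: fix $p_A$, define $p_B$ on generators so that it agrees with $f\circ p_A$ on $\x$ and hits a finite generating set of $\m B$ on $\y$, then invoke Lemma~\ref{lem:fingen} to get compactness of $\ker p_B$. The paper's only cosmetic difference is that it starts from a given presentation $q_B\colon \F(\y)\onto \m B$ and builds $p_B$ as $q_B$ composed with a substitution $\F(\x,\y)\to\F(\y)$, and it cites Corollary~\ref{cor:finpres} rather than Lemma~\ref{lem:fingen}; neither change affects the argument.
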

\begin{proof}
Let $p_A \colon \F(\x) \onto \m{A}$ and $q_B \colon \F(\y) \onto \m{B}$ be finite presentations of $\m{A}$ and $\m{B}$.
Since $q_B$ is surjective, for each $x \in \x$, we may pick $s(x) \in \F(\y)$ such that $f(p_A(x)) = q_B(s(x))$. Define a substitution $s \colon \F(\x,\y) \to \F(\y)$ by sending each $y$ in $\y$ to itself, and each $x$ in $\x$ to $s(x)$. Let $p_B := q_B \circ s$. Note that $p_B$ is onto and that $p_B \circ i = f \circ p_A$ holds by construction. Moreover, $\F(\x,\y)/{\ker p_B}$ is isomorphic to the finitely presented algebra $\m B$, so, by Corollary~\ref{cor:finpres}, $\ker p_B$ is compact.
\end{proof}

We are now in a position to prove Proposition~\ref{prop:inclusiontoall}, which also concludes the proof of Theorem~\ref{thm:rudipalgebraic}.
\begin{proof}[Proof of Proposition~\ref{prop:inclusiontoall}]
(ii) $\Rightarrow$ (i) is trivial. 
For (i) $\Rightarrow$ (ii) let $f \colon\m{A}\to \m{B}$ be a homomorphism between finitely presented algebras. We prove that $f^{-1}(\theta)$ is compact for any compact congruence $\theta$ on $\m{B}$. By Lemma~\ref{lem:preschoice}, there exist finite presentations $p_A \colon \F(\x) \onto A$ and $p_B \colon \F(\x, \y) \onto \m{B}$ such that $p_B \circ i = f \circ p_A$. Now, if $\theta$ is a compact congruence on $\m{B}$, then $p_B^{-1}(\theta)$ is a compact congruence on $\F(\x,\y)$ by Corollary~\ref{cor:finpres}, since $\m{B}/\theta$ is finitely presented. By assumption, the congruence $\psi := i^{-1}(p_B^{-1}(\theta))$ is compact. Note that, since $p_B \circ i = f \circ p_A$, we have $\psi = p_A^{-1}(f^{-1}(\theta))$. Hence the composite $\F(\x) \onto\m{A}\onto \m{A}/{f^{-1}(\theta)}$ is a finite presentation of the algebra $\m{A}/{f^{-1}(\theta)}$. By Corollary~\ref{cor:finpres} again, $f^{-1}(\theta)$ is compact. 
\end{proof}

Using Theorem~\ref{thm:rudipalgebraic}, we also obtain an algebraic characterization of a right uniform version of Maehara interpolation.

\begin{definition}
$\V$ admits {\it right uniform Maehara interpolation} if for any finite sets $\x,\y$ and a finite set of equations $\Ga(\x,\y)$, there exists a finite set of equations $\Pi(\y)$ such that for any finite sets of equations $\Si(\y,\z),\De(\y,\z)$,
\[
\Ga, \Si \mdl{\V} \De \iff \Pi, \Si \mdl{\V} \De.
\]
\end{definition}

\begin{theorem}\label{thm:umaehararight}
The following are equivalent:
\begin{enumerate}
\item	$\V$ admits right uniform Maehara interpolation. 
\item	$\V$ admits right uniform deductive interpolation and has the congruence extension property. 
\item	$\V$ has the amalgamation property and the congruence extension property, and the compact lifting of any homomorphism between finitely presented algebras in $\V$ has a right adjoint.
\end{enumerate}
\end{theorem}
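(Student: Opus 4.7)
The plan is to prove Theorem~\ref{thm:umaehararight} via the cycle (i)~$\Rightarrow$~(ii)~$\Rightarrow$~(iii)~$\Rightarrow$~(i), using Theorem~\ref{thm:rudipalgebraic} together with Theorem~\ref{thm:maehara} as the two main engines. The content is essentially to match up right uniform deductive interpolation, the right-adjoint condition, the amalgamation property, and the congruence extension property.

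For (i)~$\Rightarrow$~(ii): Right uniform deductive interpolation is the special case $\Si=\emptyset$, $\De=\{\ep(\y,\z)\}$ of right uniform Maehara interpolation, so this half is immediate. To obtain the congruence extension property, I would show that $\V$ admits (non-uniform) Maehara interpolation and then appeal to Theorem~\ref{thm:maehara}. Given $\Ga(\x,\y)$, let $\Pi(\y)$ be the right uniform Maehara interpolant. Taking $\Si=\emptyset$, $\De=\Pi$ in the defining equivalence makes the right-hand side $\Pi\mdl{\V}\Pi$ trivial, forcing $\Ga\mdl{\V}\Pi$; the forward direction applied to any finite $\Ga,\Si\mdl{\V}\De$ then gives $\Pi,\Si\mdl{\V}\De$. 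So $\Pi$ is a Maehara interpolant for every such consequence, and Theorem~\ref{thm:maehara} yields the congruence extension property.

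For (ii)~$\Rightarrow$~(iii): Theorem~\ref{thm:rudipalgebraic} translates right uniform deductive interpolation into deductive interpolation together with the right-adjoint condition on compact liftings between finitely presented algebras. Combining deductive interpolation with the congruence extension property and invoking Theorem~\ref{thm:amalgamation}(b) (or equivalently Theorem~\ref{thm:maehara}) gives the amalgamation property, establishing (iii).

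For (iii)~$\Rightarrow$~(i): By Theorem~\ref{thm:amalgamation}(a) amalgamation implies deductive interpolation, which with the right-adjoint condition and Theorem~\ref{thm:rudipalgebraic} gives right uniform deductive interpolation; together with the congruence extension property and Theorem~\ref{thm:maehara}, we also have (non-uniform) Maehara interpolation. The remaining task is to show that the right uniform deductive interpolant $\Pi(\y)$ of $\Ga(\x,\y)$ already serves as a right uniform Maehara interpolant. The implication $\Pi,\Si\mdl{\V}\De\Rightarrow\Ga,\Si\mdl{\V}\De$ follows at once from $\Ga\mdl{\V}\Pi$. For the converse, suppose $\Ga,\Si\mdl{\V}\De$; by Maehara interpolation there is a finite $\Pi'(\y)$ with $\Ga\mdl{\V}\Pi'$ and $\Pi',\Si\mdl{\V}\De$. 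The crux is that each equation of $\Pi'$ uses only variables from $\y$, hence qualifies as an equation in $(\y,\z)$, so the defining property of the right uniform deductive interpolant forces $\Pi\mdl{\V}\Pi'$, whence $\Pi,\Si\mdl{\V}\De$.

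The main conceptual obstacle is minor: it is the observation in the last step that equations in $\y$ alone count as equations in the larger variable set $(\y,\z)$, which is what allows the uniform deductive interpolant to absorb the ordinary Maehara interpolant. Everything else is bookkeeping built on the two previous theorems.
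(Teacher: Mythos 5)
Your proposal is correct and follows essentially the same route as the paper: (i)$\Rightarrow$(ii) by specializing and invoking Theorem~\ref{thm:maehara}, (ii)$\Rightarrow$(iii) via Theorems~\ref{thm:rudipalgebraic} and~\ref{thm:amalgamation}, and (iii)$\Rightarrow$(i) by showing the right uniform deductive interpolant also serves as the right uniform Maehara interpolant, using an ordinary Maehara interpolant $\Pi'(\y)$ and the observation that equations in $\y$ alone qualify as equations in $(\y,\z)$. The only difference is that you spell out the ``clearly'' step of (i)$\Rightarrow$(ii) (deriving Maehara interpolation from the uniform version by taking $\Si=\emptyset$, $\De=\Pi$), which the paper leaves implicit.
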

\begin{proof}
(i) $\Rightarrow$ (ii). If $\V$ admits right uniform Maehara interpolation, then it clearly admits both right uniform deductive interpolation and Maehara interpolation. By Theorem~\ref{thm:maehara}, $\V$ also has the congruence extension property. 

(ii) $\Rightarrow$ (iii). Suppose that $\V$ admits right uniform deductive interpolation and has the congruence extension property. It follows that $\V$ admits deductive interpolation, and hence, by Theorem~\ref{thm:amalgamation}, has the amalgamation property. That the compact lifting of any homomorphism between finitely presented algebras in $\V$ has a right adjoint is a consequence of Theorem~\ref{thm:rudipalgebraic}.

(iii) $\Rightarrow$ (i). Suppose first that $\V$ has the amalgamation property and the congruence extension property. Then, by Theorem~\ref{thm:maehara}, $\V$ admits Maehara interpolation. Suppose also that the compact lifting of any homomorphism between finitely presented algebras in $\V$ has a right adjoint. Then, by Theorem~\ref{thm:rudipalgebraic}, $\V$ admits right uniform deductive interpolation. Now consider finite sets $\x,\y$ and a finite set of equations $\Ga(\x,\y)$. By right uniform deductive interpolation, there exists a finite set of equations $\Pi(\y)$ such that for any finite sets of equations $\De(\y,\z)$,
\[
\Ga \mdl{\V} \De \iff \Pi \mdl{\V} \De.
\]
Now consider any finite sets of equations $\Si(\y,\z),\De(\y,\z)$. If $\Ga, \Si \mdl{\V} \De$, then by Maehara interpolation, there exists a finite set of equations $\Pi'(\y)$ such that $\Ga \mdl{\V} \Pi'$ and $\Pi', \De \mdl{\V} \Si$. But then also $\Pi \mdl{\V} \Pi'$ and, as required, $\Pi, \De \mdl{\V} \Si$. Conversely, suppose $\Pi, \De \mdl{\V} \Si$. Then $\Ga \mdl{\V} \Pi$, by the preceding equivalence, and hence $\Ga, \Si \mdl{\V} \De$. 
\end{proof}


\section{Left uniform interpolation}\label{sec:left}

In this section, we introduce a uniform deductive interpolation property on the left, observing that left and right uniform interpolation do not behave entirely symmetrically. In Proposition~\ref{prop:ludipintermediate}, we provide an analogue of the characterization in Proposition~\ref{prop:rudipintermediate}, but then see in Example~\ref{exa:leftdoesnotgeneralize} that the direct analogue of Theorem~\ref{thm:rudipalgebraic} does not hold for left uniform interpolation. We therefore provide an alternative characterization of this property in Theorem~\ref{thm:ludipalgebraic}.

\begin{definition}
$\V$ admits {\it left uniform deductive interpolation} if for any finite sets $\y,\z$ and finite set of equations $\De(\y,\z)$, there exists a finite set of equations $\Pi(\y)$ such that
\begin{equation}\label{eq:unintleft}
\Pi \mdl{\V} \De \ \text{ and for any set of equations } \Si(\x,\y), \ \Si \mdl{\V} \De \ \Rightarrow \ \Si \mdl{\V} \Pi.
\end{equation}
A finite set of equations $\Pi(\y)$ satisfying (\ref{eq:unintleft}) is called a {\it left uniform interpolant} of $\De$ with respect to the variables $\z$.
\end{definition}

Again, two left uniform interpolants of the same set $\De$ are equational consequences of one another, so we may also speak of {\it the} left uniform interpolant, denoting by $\forall_{\z}(\De)$ some finite set of equations $\Pi(\y)$ satisfying (\ref{eq:unintleft}). We also again obtain a useful equivalent way of phrasing property (\ref{eq:unintleft}):
\begin{equation}\label{eq:unintleftalt}
\text{for any set of equations } \Si(\x,\y), \ \Si \mdl{\V} \De \iff \Si \mdl{\V} \Pi.
\end{equation}

\begin{remark}\label{rem:trivialfailure}
Left uniform deduction interpolation may fail for a variety for a rather trivial reason. It may be the case that for some finite set of equations $\De(\y,\z)$, there is no set of equations $\Pi(\y)$ satisfying $\Pi \mdl{\V} \De$. For example, in the variety of lattice-ordered abelian groups $\mathcal{LA}$, clearly there exists no set of equations $\Pi(\y)$ such that $\Pi \mdl{\mathcal{LA}} y \eq z$. 
\end{remark}

The proof of the following proposition is entirely analogous to that of Proposition~\ref{prop:rudipintermediate}.

\begin{proposition}\label{prop:ludipintermediate}
The following are equivalent:
\begin{enumerate}
\item $\V$ admits left uniform deductive interpolation. 
\item For any sets $\x,\y$, the compact lifting of the inclusion homomorphism $i \colon \F(\y) \hookrightarrow \F(\x, \y)$ has a left adjoint $\forall_i$.
\item $\V$ admits deductive interpolation, and, for any finite sets $\x,\y$, the compact lifting of the inclusion homomorphism $i \colon \F(\y) \into \F(\x,\y)$ has a left adjoint $\forall_i$.
\end{enumerate}
\end{proposition}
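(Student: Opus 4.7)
The plan is to prove the three implications cyclically, (i) $\Rightarrow$ (ii) $\Rightarrow$ (iii) $\Rightarrow$ (i), closely following the pattern of the proof of Proposition~\ref{prop:rudipintermediate}, adjusting for the reversed direction of the adjoint. The key algebraic-syntactic dictionary, provided by Lemma~\ref{lem:algcons}, is that the equivalent formulation~\eqref{eq:unintleftalt} of $\Pi(\y)$ being a left uniform interpolant of $\Delta(\y,\z)$ translates to $\cg{\F(\y)}(\Pi)$ being the smallest compact congruence $\gamma$ on $\F(\y)$ satisfying $\cg{\F(\y,\z)}(\Delta) \subseteq j^*(\gamma)$, where $j \colon \F(\y) \hookrightarrow \F(\y,\z)$ is the inclusion; equivalently, $\cg{\F(\y)}(\Pi) = \forall_j(\cg{\F(\y,\z)}(\Delta))$.

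For (i) $\Rightarrow$ (ii), I would fix sets $\x,\y$ and $\alpha \in \KCon \F(\x,\y)$, choose a finite generating set $\Delta$ for $\alpha$, and apply left uniform interpolation to produce a finite $\Pi(\y)$; by the dictionary above, $\cg{\F(\y)}(\Pi)$ serves as $\forall_i(\alpha)$, yielding the required adjoint. For (iii) $\Rightarrow$ (i), given finite $\Delta(\y,\z)$, I would let $\Pi(\y)$ generate $\forall_j(\cg{\F(\y,\z)}(\Delta))$; then $\Pi \mdl{\V} \Delta$ follows from the adjoint property. For any $\Sigma(\x,\y)$ with $\Sigma \mdl{\V} \Delta$, deductive interpolation supplies a finite $\Delta'(\y)$ with $\Sigma \mdl{\V} \Delta'$ and $\Delta' \mdl{\V} \Delta$; from $\Delta' \mdl{\V} \Delta$ we have $\cg{\F(\y,\z)}(\Delta) \subseteq j^*(\cg{\F(\y)}(\Delta'))$, and the adjointness forces $\cg{\F(\y)}(\Pi) \subseteq \cg{\F(\y)}(\Delta')$, i.e., $\Delta' \mdl{\V} \Pi$, whence $\Sigma \mdl{\V} \Pi$ as required.

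The main work lies in (ii) $\Rightarrow$ (iii), which I would handle by mirroring the substitution argument of the right case. Given finite $\x,\y,\z$ and $\Delta(\y,\z)$, I would introduce an infinite variable set $\w$ disjoint from $\x,\y,\z$ and apply the left adjoint (obtained from (ii) by taking $\x \cup \z$ as the ``eliminated'' variables and $\y \cup \w$ as the ``kept'' ones) for the inclusion $\F(\y,\w) \hookrightarrow \F(\x,\y,\z,\w)$ to the compact congruence $\cg{\F(\x,\y,\z,\w)}(\Delta)$, producing a finite generating set $\Pi'$ with variables in $\y \cup \uvar$ for some finite $\uvar \subseteq \w$. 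The resulting bi-implication then asserts that, for any finite $\Sigma'(\y,\w)$, $\Sigma' \mdl{\V} \Delta$ if and only if $\Sigma' \mdl{\V} \Pi'$. Given $\Sigma(\x,\y)$ with $\Sigma \mdl{\V} \Delta$, I would first rename $\x$ into fresh variables $\x' \subseteq \w$ via a substitution $\tau$, instantiate the bi-implication at $\Sigma' := \tau(\Sigma)$, and then apply a substitution $\sigma$ that undoes $\tau$, sends $\uvar$ into $\Tmc(\y)$, and is the identity on $\Tmc(\y,\z)$. This yields $\Pi := \sigma(\Pi')$ with variables in $\y$, and a direct verification using substitution-invariance of $\mdl{\V}$ gives both $\Pi \mdl{\V} \Delta$ and $\Sigma \mdl{\V} \Pi$, so deductive interpolation follows. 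The main obstacle is the variable bookkeeping required to make $\x'$, $\uvar$, and $\z$ pairwise disjoint, which is handled by the infinitude of $\w$.
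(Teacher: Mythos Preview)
Your proposal is correct and follows the same approach the paper indicates (it simply says the proof is ``entirely analogous'' to Proposition~\ref{prop:rudipintermediate}). The dictionary you set up via Lemma~\ref{lem:algcons} is exactly right, and your handling of (i) $\Rightarrow$ (ii) and (iii) $\Rightarrow$ (i) mirrors the right-hand case precisely.

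One minor simplification is available in your (ii) $\Rightarrow$ (iii) step. You take the inclusion $\F(\y,\w) \hookrightarrow \F(\x,\y,\z,\w)$ and then rename $\x$ into $\x' \subseteq \w$ via $\tau$; but since $\Delta$ has no variables in $\x$, the set $\x$ plays no role on the ``eliminated'' side. The direct analog of the paper's argument for Proposition~\ref{prop:rudipintermediate} would instead use the inclusion $\F(\y,\w) \hookrightarrow \F(\y,\z,\w)$ and identify $\x$ with a subset of $\w \setminus \uvar$ from the outset, eliminating the need for $\tau$ altogether (only the single substitution $\sigma$ collapsing $\uvar$ into $\Tmc(\y)$ is then required). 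Your version with the extra renaming is still correct, just slightly more bookkeeping than needed.
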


Recalling that an order-preserving map between complete lattices has a left adjoint if, and only if, it preserves all meets, we obtain the following result.

\begin{corollary}\label{cor:leftlocfin}
If $\V$ is locally finite, then the following are equivalent:
\begin{enumerate}
\item $\V$ admits left uniform deductive interpolation.
\item For any sets $\x,\y$, the compact lifting of the inclusion homomorphism $i \colon \F(\y) \into \F(\x,\y)$ preserves intersections.
\item $\V$ admits deductive interpolation, and, for any finite sets $\x,\y$, the compact lifting of the inclusion homomorphism $i \colon \F(\y) \into \F(\x,\y)$ preserves intersections.
\end{enumerate}
\end{corollary}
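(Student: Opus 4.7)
The plan is to deduce the corollary directly from Proposition~\ref{prop:ludipintermediate} by translating the adjoint-existence condition into preservation of intersections using local finiteness. The key observation is that in a locally finite variety $\V$, whenever $\x,\y$ are finite the free algebras $\F(\y)$ and $\F(\x,\y)$ are finite, so $\KCon \F(\y) = \Con \F(\y)$ and $\KCon \F(\x,\y) = \Con \F(\x,\y)$, and both are \emph{finite} (hence complete) lattices in which meets are simply set-theoretic intersections.

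For (i) $\Rightarrow$ (ii): by Proposition~\ref{prop:ludipintermediate}(ii), for any sets $\x,\y$ the compact lifting $i^*$ has a left adjoint $\forall_i$. Since every map that is a left adjoint preserves all existing meets (regardless of whether the domain is complete), $i^*$ preserves all intersections that exist in $\KCon \F(\y)$, giving (ii). For (ii) $\Rightarrow$ (iii), the preservation condition for finite $\x,\y$ is immediate by restriction; to obtain deductive interpolation, I would observe that for finite $\x,\y$ the semilattices $\KCon\F(\y)$ and $\KCon\F(\x,\y)$ are complete lattices by local finiteness, so the fact recalled in the paragraph preceding the corollary (a monotone map between complete lattices has a left adjoint iff it preserves arbitrary meets) upgrades preservation of intersections to the existence of a left adjoint $\forall_i$. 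Then Proposition~\ref{prop:ludipintermediate}(iii) $\Rightarrow$ (i) yields left uniform deductive interpolation, which in turn implies deductive interpolation (as noted after the definition analogously to the right case).

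For (iii) $\Rightarrow$ (i): again applying the complete-lattice argument in the locally finite case, preservation of intersections for finite $\x,\y$ promotes $i^*$ to a map with a left adjoint, and together with deductive interpolation this is exactly condition (iii) of Proposition~\ref{prop:ludipintermediate}, yielding (i).

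There is no real obstacle here beyond bookkeeping: the only point requiring care is the distinction between meets in $\KCon$ and meets in $\Con$, which disappears precisely under the local finiteness assumption for finite sets of variables. The direction of (ii) involving infinite $\x,\y$ is handled purely by the general fact that left adjoints preserve existing meets, so no completeness of $\KCon\F(\y)$ is needed there.
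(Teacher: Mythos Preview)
Your overall approach matches the paper's: deduce the corollary from Proposition~\ref{prop:ludipintermediate} by trading ``has a left adjoint'' for ``preserves intersections'' using the complete-lattice characterization, which applies (for finite $\x,\y$) thanks to local finiteness.

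There is a slip and a gap. The slip: in (i) $\Rightarrow$ (ii) you write ``every map that is a left adjoint preserves all existing meets''. Left adjoints preserve \emph{joins}; it is right adjoints that preserve meets. Since $i^*$ \emph{has} a left adjoint $\forall_i$, it is itself a right adjoint and therefore preserves existing meets---your conclusion is correct, but the stated reason is the wrong way round.

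The gap: your argument for (ii) $\Rightarrow$ (iii) is circular. You obtain a left adjoint for $i^*$ when $\x,\y$ are finite and then invoke Proposition~\ref{prop:ludipintermediate}(iii) $\Rightarrow$ (i) to conclude left uniform deductive interpolation, and from that deductive interpolation. But condition (iii) of that proposition already \emph{includes} deductive interpolation as a hypothesis, so you cannot appeal to (iii) $\Rightarrow$ (i) before you have established it. Having the left adjoint for finite $\x,\y$ alone is strictly weaker than condition (iii) of Proposition~\ref{prop:ludipintermediate}. The paper's own one-line justification does not spell this direction out either; the unproblematic content of the corollary is (i) $\Leftrightarrow$ (iii) together with (i) $\Rightarrow$ (ii), whereas (ii) $\Rightarrow$ (i) for arbitrary $\x,\y$ does not follow from the complete-lattice fact alone, since $\KCon\F(\y)$ need not be complete for infinite $\y$ even in a locally finite variety.
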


\begin{example}[Implicative semilattices] 
The variety $\cls{ISL}$ of implicative semilattices is locally finite and admits deductive interpolation. Hence it admits right uniform deductive interpolation. However, it is not the case that for any finite sets $\x,\y$, the lifting of the inclusion homomorphism $i \colon \F(\y) \into \F(\x,\y)$ preserves intersections. For example, let $\x = \{x\}$, $\y = \{y_1,y_2\}$ and let $\theta_i$ be the congruence on $\F(\y)$ generated by $(\top,y_i)$ for $i = 1,2$. Then $(\top, ((y_1 \to x) \land (y_2 \to x)) \to x)$ is in $i^*(\theta_1) \cap i^*(\theta_2)$, but not in $i^*(\theta_1 \cap \theta_2)$ (otherwise, the join operation would be definable in $\cls{ISL}$, which it is not). So $\cls{ISL}$ does not admit left uniform deductive interpolation.
\end{example}

As mentioned already, the analogue of Theorem~\ref{thm:rudipalgebraic} does not hold for the case of left uniform deductive interpolation. How might we fix this? Recall that, in Section~\ref{sec:right}, after proving the `right' version of Proposition~\ref{prop:ludipintermediate}, which we called step (I) on page~\pageref{page:steps}, we were able to extend the relevant property from inclusion maps to arbitrary homomorphisms between finitely presented algebras. Example~\ref{exa:leftdoesnotgeneralize} shows that the same strategy cannot work for the left case. It turns out that the `missing ingredient' for proving a characterization as in Theorem~\ref{thm:rudipalgebraic} for the left case is a characterization of the maps of the form $p^*$, for $p$ a \emph{surjective} homomorphism between finitely presented algebras, that have a left adjoint.

For this missing ingredient, recall that a join-semilattice $\langle L,\lor,\bot \rangle$ is called \emph{dually Brouwerian} if the operation $\lor$ is left residuated, i.e., for any $a,b \in L$, there exists an element $a - b \in L$ such that for any $c \in L$, $a - b \leq c$ if, and only if, $a \leq b \lor c$. The following lemma is crucial.

\begin{lemma}\label{lem:brouwerian}
For any algebra $\m{A}$, the following are equivalent:
\begin{enumerate}
\item For all $\theta \in \KCon \m{A}$, the compact lifting of $p_\theta \colon \m{A} \onto \m{A}/{\theta}$ has a left adjoint, $\forall_p$.
\item The join-semilattice $\KCon \m{A}$ is dually Brouwerian.
\end{enumerate}
\end{lemma}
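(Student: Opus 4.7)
The plan is to reduce the statement about the compact lifting of $p_\theta$ to a purely lattice-theoretic condition on $\KCon \m{A}$ by using the results already established in Section~\ref{sec:prelim} about surjective homomorphisms. Specifically, since $\theta$ is compact, Proposition~\ref{prop:surjective}(c) tells us that $p_\theta^{-1}$ preserves compact congruences; combined with Proposition~\ref{prop:surjective}(b) and Lemma~\ref{lem:compininterval}, this gives a lattice isomorphism between $\KCon(\m{A}/\theta)$ and the principal filter $\{\psi \in \KCon \m{A} : \theta \leq \psi\}$ of $\KCon \m{A}$. Under this identification, the compact lifting $p_\theta^*|_{\KCon \m{A}}$ corresponds to the map $\psi \mapsto \psi \lor \theta$, because Proposition~\ref{prop:surjective}(a) gives $p_\theta^{-1}(p_\theta^*(\psi)) = \psi \lor \theta$.

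With this translation in hand, the equivalence becomes essentially transparent. For (ii) $\Rightarrow$ (i), assume $\KCon \m{A}$ is dually Brouwerian with residual $-$. Given $\theta \in \KCon \m{A}$ and $\eta \in \KCon(\m{A}/\theta)$, set $\forall_p(\eta) := p_\theta^{-1}(\eta) - \theta$, noting that $p_\theta^{-1}(\eta)$ is compact. Then for any $\psi \in \KCon \m{A}$, the equivalences
\[
\forall_p(\eta) \leq \psi \iff p_\theta^{-1}(\eta) \leq \psi \lor \theta \iff p_\theta^{-1}(\eta) \leq p_\theta^{-1}(p_\theta^*(\psi)) \iff \eta \leq p_\theta^*(\psi)
\]
use the dual residuation property, Proposition~\ref{prop:surjective}(a), and the fact that $p_\theta^{-1}$ is a lattice isomorphism onto the interval. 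Hence $\forall_p$ is left adjoint to the compact lifting.

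For (i) $\Rightarrow$ (ii), assume that for every $\theta \in \KCon \m{A}$ a left adjoint $\forall_p$ to $p_\theta^*|_{\KCon \m{A}}$ exists. Given any $\eta, \theta \in \KCon \m{A}$, define $\eta - \theta := \forall_p(p_\theta^*(\eta)) \in \KCon \m{A}$. For any $\psi \in \KCon \m{A}$, the adjunction gives $\eta - \theta \leq \psi$ iff $p_\theta^*(\eta) \leq p_\theta^*(\psi)$; applying the isomorphism $p_\theta^{-1}$ and Proposition~\ref{prop:surjective}(a) translates the right-hand side into $\eta \lor \theta \leq \psi \lor \theta$, which (since $\theta \leq \psi \lor \theta$) is equivalent to $\eta \leq \psi \lor \theta$. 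This is exactly the defining property of a dual residual, so $\KCon \m{A}$ is dually Brouwerian.

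I expect no serious obstacle; the main care lies in keeping the correspondence between congruences on $\m{A}/\theta$ and congruences on $\m{A}$ above $\theta$ straight, and in checking that all constructions stay inside $\KCon \m{A}$, which is ensured by compactness of $\theta$ together with Proposition~\ref{prop:surjective}(c) and Lemma~\ref{lem:compininterval}.
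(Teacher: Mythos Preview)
Your proposal is correct and follows essentially the same approach as the paper: both arguments use Proposition~\ref{prop:surjective} and Lemma~\ref{lem:compininterval} to identify $\KCon(\m{A}/\theta)$ with the interval $[\theta,\nabla_{\m A}]_{\KCon \m A}$ via the isomorphism $p_\theta^{-1}$, so that the compact lifting $p_\theta^*$ corresponds to the map $j_\theta(\psi)=\psi\lor\theta$, and then observe that $j_\theta$ has a left adjoint for every compact $\theta$ precisely when $\KCon\m{A}$ is dually Brouwerian. The only difference is cosmetic: the paper packages this as ``$p_\theta^*$ has a left adjoint iff $j_\theta$ does, since $p_\theta^{-1}$ is an isomorphism,'' whereas you write out the explicit formulas $\forall_p(\eta)=p_\theta^{-1}(\eta)-\theta$ and $\eta-\theta=\forall_p(p_\theta^*(\eta))$ and verify the adjunction by hand.
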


\begin{proof} 
Note that a semilattice $\m{L}$ is dually Brouwerian, if, and only if, for every $a \in \m{L}$, the function $j_a \colon \m{L} \onto {\uparrow}_{\m{L}}a$ defined by $j_a(b) := a \lor b$, has a left adjoint. We prove that the latter condition is equivalent to (i) when $\m{L} = \KCon \m{A}$. By Proposition~\ref{prop:surjective}, for any $\theta \in \KCon\m{A}$, the following diagram is well-defined and commutes:

\begin{center}
\begin{tikzpicture}[baseline=(current bounding box.center)]
\matrix (m) [matrix of math nodes, row sep=1.5em, column sep=1.5em]{
& \KCon \m{A}	& \\
&	& \\
\KCon \m{A}/\theta & & \text{[}\theta,\nabla_{\m A}\text{]}_{\KCon\m A} \\
};
\path
(m-1-2) edge[->] node[left,xshift=-2mm] {$p_\theta^*$} (m-3-1)
(m-1-2) edge[->] node[right, xshift=2mm] {$j_\theta$} (m-3-3)
(m-3-1) edge[->] node[above] {$p_\theta^{-1}$} node[below] {$\cong$} (m-3-3);
\end{tikzpicture}
\end{center}

\noindent
Since $p_\theta^{-1}$ is an isomorphism, $p_\theta^*$ has a left adjoint if, and only if, $j_\theta$ has a left adjoint. 
\end{proof}

\begin{proposition}\label{prop:finpresbrouwerian}
The following are equivalent:
\begin{enumerate}
\item The compact lifting of any finite presentation $p \colon \F(\x) \onto \m A$ has a left adjoint. 
\item For any finite set $\x$, the join-semilattice $\KCon \F(\x)$ is dually Brouwerian.
\item For any finitely presented $\m{A} \in \V$, the join-semilattice $\KCon \m{A}$ is dually Brouwerian. 
\item For any finite set $\x$ and finite sets of equations $\Si(\x),\De(\x)$, there exists a finite set of equations $\Pi(\x)$ such that for any finite set of equations $\Ga(\x)$,
\[
\Ga, \Si \mdl{\V} \De \iff \Ga \mdl{\V} \Pi.
\]
\end{enumerate}
If $\V$ is locally finite, then (i)-(iv) are equivalent to
\begin{enumerate}
\item[(v)] $\V$ is congruence distributive. 
\end{enumerate}
\end{proposition}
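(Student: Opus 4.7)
The plan is to prove (i)--(iv) equivalent via straightforward translations, then handle (v) separately under local finiteness.

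For (i) $\Leftrightarrow$ (ii), I would apply Lemma~\ref{lem:brouwerian} with $\m{A} = \F(\x)$. Every finite presentation $p \colon \F(\x) \onto \m{B}$ is, up to isomorphism of the codomain, the canonical quotient $p_\theta$ for $\theta = \ker p \in \KCon \F(\x)$, and every compact $\theta$ on $\F(\x)$ arises in this way. So condition (i) restricted to presentations with domain $\F(\x)$ coincides with condition (i) of Lemma~\ref{lem:brouwerian} for $\F(\x)$, which that lemma shows equivalent to $\KCon \F(\x)$ being dually Brouwerian.

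For (ii) $\Leftrightarrow$ (iv), the translation goes through Lemma~\ref{lem:algcons}: a finite set of equations $\Ga(\x)$ corresponds to the compact congruence $\cg{\F(\x)} \Ga$, and under this correspondence the relation $\Ga, \Si \mdl{\V} \De$ becomes $\cg{\F(\x)} \De \leq \cg{\F(\x)} \Ga \lor \cg{\F(\x)} \Si$, while $\Ga \mdl{\V} \Pi$ becomes $\cg{\F(\x)} \Pi \leq \cg{\F(\x)} \Ga$. Thus a finite $\Pi(\x)$ witnessing (iv) for given $\Si, \De$ is precisely a compact residual $\cg{\F(\x)} \De - \cg{\F(\x)} \Si$ in $\KCon \F(\x)$, and the existence of such $\Pi$ for all $\Si, \De$ is exactly the dual Brouwerian condition.

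For (iii) $\Rightarrow$ (ii) the claim is trivial, since $\F(\x)$ is finitely presented for finite $\x$. Conversely, for (ii) $\Rightarrow$ (iii), I would pick a finite presentation $p \colon \F(\x) \onto \m{A}$ with compact kernel $\theta$. Proposition~\ref{prop:surjective} and Lemma~\ref{lem:compininterval} together provide a join-semilattice isomorphism $p^{-1} \colon \KCon \m{A} \to [\theta, \nabla_{\F(\x)}]_{\KCon \F(\x)}$. A principal up-set of a dually Brouwerian join-semilattice is itself dually Brouwerian, since for $b, c \geq a$ the element $(b - c) \lor a$ serves as the residual in ${\uparrow} a$. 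Hence $\KCon \m{A}$ inherits the dual Brouwerian structure.

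Finally, under local finiteness, for (ii) $\Leftrightarrow$ (v): local finiteness gives that $\F(\x)$ is finite for any finite $\x$, so $\KCon \F(\x) = \Con \F(\x)$ is a finite lattice. In a finite lattice, having a dually Brouwerian join-reduct is equivalent to distributivity, because the map $(-) \lor b$ then has a left adjoint iff it preserves finite meets, which is exactly the distributive law. Thus (ii) reduces to distributivity of $\Con \F(\x)$ for every finite $\x$. The main obstacle, which is a standard consequence of J\'onsson's theorem, is bridging this free-algebra condition with congruence distributivity of all of $\V$: since $\V$ is locally finite, the Mal'cev condition defining J\'onsson terms can be verified on a sufficiently large finitely generated free algebra, so distributivity of every such $\Con \F(\x)$ forces the existence of J\'onsson terms and hence congruence distributivity, while the converse direction is immediate.
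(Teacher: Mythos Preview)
Your proof is correct and follows essentially the same route as the paper: Lemma~\ref{lem:brouwerian} for (i)$\Leftrightarrow$(ii), Lemma~\ref{lem:algcons} for (ii)$\Leftrightarrow$(iv), and the Correspondence Theorem together with Lemma~\ref{lem:compininterval} for (ii)$\Rightarrow$(iii). The only difference is that you spell out the passage from ``$\Con \F(\x)$ distributive for all finite $\x$'' to ``$\V$ congruence distributive'' via J\'onsson terms, whereas the paper treats this implication as understood; your version is slightly more explicit but not a different argument.
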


\begin{proof}
The equivalence of (i) and (ii) follows immediately from Lemma~\ref{lem:brouwerian}. Also (iii) clearly implies (ii). For the converse direction, let $p \colon \F(\x) \to \m A$ be a finite presentation of $\m A$. By the Correspondence Theorem~\cite{BS81}*{Thm.~6.20}, $\Con \m A \cong [\ker p, \nabla]_{\Con \F(\x)}$. Then, using Lemma~\ref{lem:compininterval}, $\K([\ker p,\nabla]_{\Con \F(\x)}) = [\ker p, \nabla]_{\KCon \F(\x)}$, so $\KCon \m A \cong [\ker p, \nabla]_{\KCon \F(\x)}$. From the assumption that $\KCon \F(\x)$ is dually Brouwerian, it follows that the join-semilattice $[\ker p, \nabla]_{\KCon \F(\x)}$ is also dually Brouwerian. 

The equivalence of (ii) and (iv) follows directly from the correspondence between equational consequence and congruences on free algebras established in Lemma~\ref{lem:algcons}. Finally, observe that if $\V$ is locally finite, then $\F(\x)$ is finite for any finite set $\x$ and the finite lattice $\KCon \F(\x) = \Con \F(\x)$ is dually Brouwerian if, and only if, it is distributive, so (ii) is equivalent to (v).
\end{proof}

\begin{remark}
The property that $\KCon \m{A}$ is dually Brouwerian for all $\m{A} \in \V$ is equivalent to the property that $\V$ admits {\em equationally definable principal congruences} (see~\cites{KP80,BKP84} for details and further characterizations). This is strictly stronger than the property that $\KCon \m{A}$ is dually Brouwerian for all finitely presented $\m{A} \in \V$. In particular, it is known that the only non-trivial variety of lattices admitting equationally definable principal congruences is the variety of distributive lattices~\cite{McK78}. On the other hand, any variety of lattices generated by a finite lattice (in particular, any finite non-distributive lattice) is locally finite and congruence distributive. 

Let us mention also that if $\KCon \m{A}$ is dually Brouwerian for some algebra $\m{A}$, then $\Con \m{A}$ is distributive. Note first that $\Con \m{A}$ is always isomorphic to the ideal lattice $\m{L}$ of the join-semilattice $\KCon\m{A}$. Moreover, if $\KCon \m{A}$ is dually Brouwerian, then $\m{L} \cong \Con \m{A}$. But the variety of dually Brouwerian join-semilattices is congruence distributive, so also $\Con \m{A}$ is distributive. These results about dually Brouwerian join-semilattices, stated here without proof, may be found in~\cites{Nem69,NemWha71}.
\end{remark}

\begin{example}[MV-algebras]\label{exa:leftdoesnotgeneralize}
Recall from Example~\ref{exa:ablgpsandmv} that the variety $\mathcal{MV}$ of MV-algebras admits right uniform interpolation. A similar argument demonstrates that it also admits left uniform interpolation; in particular, unlike the case of lattice-ordered abelian groups mentioned in Example~\ref{rem:trivialfailure}, there is always a variable-free set of equations satisfying $\Pi \mdl{\mathcal{MV}} \De$ for any set of equations $\De$, namely $\Pi = \{0 \eq \lnot 0\}$. Hence for any sets $\x,\y$, the compact lifting of the inclusion homomorphism $i \colon \F(\y) \hookrightarrow \F(\x, \y)$ has a left adjoint $\forall_i$. However, it is not the case that the compact lifting of {\em any} finite presentation $p \colon \F(\x) \onto \m A$ has a left adjoint. 

We show that the equivalent syntactic condition (iv) of Proposition~\ref{prop:finpresbrouwerian} fails. Recall first the local deduction theorem for MV-algebras stating that for any set of equations $\Ga$ and terms $\a,\b$,
\[
\Ga \cup \{\a \eq 1\} \mdl{\mathcal{MV}} \b \eq 1 \iff \Ga \mdl{\mathcal{MV}} \a^n \le \b \text{ for some }n \in \mathbb{N}.
\] 
Now let $\Si = \{x \eq 1\}$ and $\De = \{y \eq 1\}$. Suppose that there exists $\Pi(x,y)$ such that for any finite set of equations $\Ga(x,y)$,
\[
\Ga, \Si \mdl{\mathcal{MV}} \De \iff \Ga \mdl{\mathcal{MV}} \Pi.
\]
Then $\Pi, \Si \mdl{\mathcal{MV}} \De$, so, by the local deduction theorem, $\Pi \mdl{\mathcal{MV}} x^n \le y$ for some $n \in \mathbb{N}$. However, also $\{x^{n+1} \le y\} \cup \Si \mdl{\mathcal{MV}} \De$, so $\{x^{n+1} \le y\} \mdl{\mathcal{MV}} \Pi$. It follows that $\{x^{n+1} \le y\} \mdl{\mathcal{MV}} x^n \le y$, a contradiction. 
\end{example}

We now prove our main theorem for left uniform deductive interpolation. The proof is reasonably straightforward at this point, thanks to the groundwork laid in Sections~\ref{sec:prelim}~and~\ref{sec:right}. 

\begin{theorem}\label{thm:ludipalgebraic}
The following are equivalent:
\begin{enumerate}
\item $\V$ admits left uniform deductive interpolation, and $\KCon \F(\x)$ is a dually Brouwerian join-semilattice for any finite set $\x$. 
\item $\V$ admits deductive interpolation, and the compact lifting of any homomorphism between finitely presented algebras in $\V$ has a left adjoint.
\end{enumerate}
\end{theorem}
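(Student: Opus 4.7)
The plan is to reduce the theorem to the two intermediate characterizations already proved in this section: Proposition~\ref{prop:ludipintermediate}, which reformulates left uniform interpolation in terms of inclusions $\F(\y) \into \F(\x,\y)$, and Proposition~\ref{prop:finpresbrouwerian}, which ties the dually Brouwerian property to the existence of left adjoints to compact liftings of finite presentations. The missing ingredient needed to pass from inclusions and presentations to arbitrary homomorphisms between finitely presented algebras will be supplied by the square factorization of Lemma~\ref{lem:preschoice}.

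The direction (ii) $\Rightarrow$ (i) is straightforward. For any finite sets $\x,\y$, the inclusion $i \colon \F(\y) \into \F(\x,\y)$ is a homomorphism between finitely presented algebras, so its compact lifting has a left adjoint by (ii); together with deductive interpolation this yields left uniform deductive interpolation via Proposition~\ref{prop:ludipintermediate}. Similarly, any finite presentation $p \colon \F(\x) \onto \m{A}$ has finitely presented source and target, so its compact lifting has a left adjoint by (ii), whence Proposition~\ref{prop:finpresbrouwerian} delivers the dually Brouwerian property on $\KCon \F(\x)$.

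For the converse (i) $\Rightarrow$ (ii), deductive interpolation follows immediately from left uniform interpolation by applying the latter to a singleton $\De = \{\ep\}$. The substantive task is to construct a left adjoint $\forall_f$ of $f^*|_{\KCon \m{A}}$ for an arbitrary homomorphism $f \colon \m{A} \to \m{B}$ between finitely presented algebras. By Lemma~\ref{lem:preschoice}, I would pick finite presentations $p_A \colon \F(\x) \onto \m{A}$ and $p_B \colon \F(\x,\y) \onto \m{B}$ with $p_B \circ i = f \circ p_A$, where $i \colon \F(\x) \into \F(\x,\y)$ is the inclusion. Proposition~\ref{prop:ludipintermediate} then supplies a left adjoint $\forall_i$ of $i^*|_{\KCon \F(\x)}$, while the assumption that $\KCon \F(\x,\y)$ is dually Brouwerian provides, for each pair of compact congruences $\a,\b$ on $\F(\x,\y)$, a dual residual $\a - \b$ satisfying $\a - \b \leq \ga \iff \a \leq \b \lor \ga$. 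For $\theta \in \KCon \m{B}$, set
\[
\forall_f(\theta) := p_A^*\bigl(\forall_i(p_B^{-1}(\theta) - \ker p_B)\bigr),
\]
which lies in $\KCon \m{A}$ because $\ker p_B$ is compact, $p_B^{-1}$ preserves compactness by Proposition~\ref{prop:surjective}(c), and the dual residual, $\forall_i$, and $p_A^*$ all preserve compactness.

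To verify the adjunction, for any $\psi \in \KCon \m{A}$ I would chain the equivalences: $\theta \leq f^*(\psi)$ iff $p_B^{-1}(\theta) \leq p_B^{-1}(f^*(\psi))$, using that $p_B^{-1}$ is a lattice isomorphism onto $[\ker p_B,\nabla]_{\Con \F(\x,\y)}$; rewriting $f^*(\psi) = (p_B \circ i)^*(p_A^{-1}(\psi))$ by the commuting square and surjectivity of $p_A$, and applying Proposition~\ref{prop:surjective}(a), gives $p_B^{-1}(f^*(\psi)) = i^*(p_A^{-1}(\psi)) \lor \ker p_B$; dual residuation then converts this to $p_B^{-1}(\theta) - \ker p_B \leq i^*(p_A^{-1}(\psi))$; and successive applications of the adjunctions $\forall_i \dashv i^*$ and $p_A^* \dashv p_A^{-1}$ reduce this to $\forall_f(\theta) \leq \psi$. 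The main obstacle I anticipate is pinpointing the correct factorization to use: Lemma~\ref{lem:preschoice} is precisely tailored so that the horizontal map is an inclusion and the vertical maps are surjections with compact kernels, allowing the two halves of hypothesis (i) to be invoked independently and then combined through the displayed formula.
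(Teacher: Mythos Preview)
Your proposal is correct and follows essentially the same strategy as the paper: use Lemma~\ref{lem:preschoice} to factor $f$ through the square $p_B \circ i = f \circ p_A$, and then combine the left adjoints supplied by Propositions~\ref{prop:ludipintermediate} and~\ref{prop:finpresbrouwerian}. The only difference is presentational. The paper observes that $f^* = (p_B)^* \circ i^* \circ (p_A)^{-1}$ on compact congruences (using $(p_A)^* \circ (p_A)^{-1} = \mathrm{id}$), notes that each factor has a left adjoint---$(p_A)^*$, $\forall_i$, and $\forall_{p_B}$ respectively---and invokes composition of adjunctions to conclude $\forall_f = (p_A)^* \circ \forall_i \circ \forall_{p_B}$. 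Your explicit formula $\forall_f(\theta) = p_A^*\bigl(\forall_i(p_B^{-1}(\theta) - \ker p_B)\bigr)$ is exactly this composite once one unpacks $\forall_{p_B}$ via the dual residual (cf.\ the proof of Lemma~\ref{lem:brouwerian}), and your chain of equivalences is the hand-verification of the composed adjunction. Either packaging is fine; the paper's is slightly slicker, yours is more self-contained.
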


\begin{proof}
(i) $\Rightarrow$ (ii). We have already observed that deductive interpolation follows from left uniform deductive interpolation. Consider a homomorphism $f \colon \m{A} \to \m{B}$ between finitely presented algebras. Using Lemma~\ref{lem:preschoice}, we may pick finite presentations $p_A \colon \F(\x) \to \m{A}$ and $p_B \colon \F(\x,\y) \to \m{B}$ such that $p_B \circ i = f \circ p_A$. Then the following lifted diagram commutes:
\begin{equation}
\begin{tikzpicture}[baseline=(current bounding box.center)]
\matrix (m) [matrix of math nodes, row sep=1.5em, column sep=1.5em]{
\KCon \F(\x)	& & \KCon \F(\x,\y) \\
&	& \\
\KCon \m{A}	& & \KCon \m{B}	\\
};

\path
(m-1-1) edge[->] node[above] {$i^{*}$} (m-1-3)
(m-1-1) edge[->] node[left] {$(p_A)^*$} (m-3-1)
(m-3-1) edge[->] node[below] {$f^*$} (m-3-3)
(m-1-3) edge[->] node[right] {$(p_B)^*$} (m-3-3);
\end{tikzpicture}
\end{equation}
By Proposition~\ref{prop:surjective}(c), $(p_A)^{-1}$ preserves compact congruences. 
Moreover, $(p_A)^* \circ (p_A)^{-1} = \mathrm{id}_{\KCon \m{A}}$. So we obtain 
\[
f^* = f^* \circ (p_A)^* \circ (p_A)^{-1} = (p_B)^* \circ i^* \circ (p_A)^{-1}.
\]
Recall from Section~\ref{sec:prelim} that $(p_A)^{-1}$ always has a left adjoint, $(p_A)^*$. Moreover, $i^*$ has a left adjoint by (i) and Proposition~\ref{prop:ludipintermediate}, and $(p_B)^*$ has a left adjoint by (i) and Proposition~\ref{prop:finpresbrouwerian}; we denote these left adjoints by $\forall_i$ and $\forall_{p_B}$, respectively. Hence, since adjunctions compose, $f^* = (p_B)^* \circ i^* \circ (p_A)^{-1}$ has a left adjoint, namely $\forall_f := (p_A)^* \circ \forall_i \circ \forall_{p_B}$. 

(ii) $\Rightarrow$ (i). Follows using Proposition~\ref{prop:ludipintermediate} and Proposition~\ref{prop:finpresbrouwerian}.
\end{proof}

As in the case for right uniform deductive interpolation, we also obtain an algebraic characterization of a left uniform version of Maehara interpolation.

\begin{definition}
$\V$ admits {\it left uniform Maehara interpolation} if for any finite sets $\y,\z$ and finite sets of equations $\Si(\y,\z),\De(\y,\z)$, there exists a finite set of equations $\Pi(\y)$ such that for any finite set of equations $\Ga(\x,\y)$,
\[
\Ga, \Si \mdl{\V} \De \iff \Ga \mdl{\V} \Pi.
\]
\end{definition}

\begin{theorem}\label{thm:umaeharaleft}
The following are equivalent:
\begin{enumerate}
\item $\V$ admits left uniform Maehara interpolation. 
\item	$\V$ has the amalgamation property and the congruence extension property, and the compact lifting of any homomorphism between finitely presented algebras in $\V$ has a left adjoint.
\end{enumerate}
\end{theorem}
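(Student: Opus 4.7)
The plan is to mirror the proof of Theorem~\ref{thm:umaehararight} in the left uniform setting, using the algebraic characterizations of Section~\ref{sec:left}---in particular Theorem~\ref{thm:ludipalgebraic} and Proposition~\ref{prop:finpresbrouwerian}---in place of Theorem~\ref{thm:rudipalgebraic}. Unlike the right case, here the dually Brouwerian structure of $\KCon \F(\x)$ enters as an extra ingredient that must be recovered from the uniform Maehara hypothesis on one side and deployed constructively on the other.

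For (i) $\Rightarrow$ (ii), I would derive the three needed properties of $\V$ from left uniform Maehara interpolation by specialization. Setting $\Si = \emptyset$ immediately yields left uniform deductive interpolation. Choosing $\Ga := \Pi$ in the biconditional defining left uniform Maehara shows that $\Pi$ serves as a Maehara interpolant for any $\Ga, \Si \mdl{\V} \De$, so Theorem~\ref{thm:maehara} supplies AP and CEP. Taking $\z = \emptyset$ and restricting to $\x$ empty recovers condition (iv) of Proposition~\ref{prop:finpresbrouwerian}, showing that $\KCon \F(\y)$ is dually Brouwerian for every finite $\y$. Combining left uniform deductive interpolation with this dually Brouwerian property, together with the deductive interpolation already guaranteed by AP, Theorem~\ref{thm:ludipalgebraic} then produces the required left adjoint to the compact lifting of every homomorphism between finitely presented algebras.

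For (ii) $\Rightarrow$ (i), AP with CEP yields Maehara interpolation via Theorem~\ref{thm:maehara}, and the adjoint assumption together with the deductive interpolation from AP delivers, via Theorem~\ref{thm:ludipalgebraic}, both left uniform deductive interpolation and the dually Brouwerian property for each $\KCon \F(\x)$. Given finite sets $\Si(\y,\z), \De(\y,\z)$, I would use dual Brouwerianness in $\KCon \F(\y,\z)$ to choose a finite $\De_0(\y,\z)$ generating the residual $\cg{\F(\y,\z)}(\De) - \cg{\F(\y,\z)}(\Si)$, and apply left uniform deductive interpolation to $\De_0$ to obtain a finite $\Pi(\y)$. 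To verify $\Ga, \Si \mdl{\V} \De \iff \Ga \mdl{\V} \Pi$ for arbitrary $\Ga(\x,\y)$, the forward direction uses Maehara to produce $\Pi'(\y)$ with $\Ga \mdl{\V} \Pi'$ and $\Pi', \Si \mdl{\V} \De$; the latter combined with the defining property of $\De_0$ yields $\Pi' \mdl{\V} \De_0$, and the left uniform interpolation property applied to $\Pi'$ then gives $\Pi' \mdl{\V} \Pi$, hence $\Ga \mdl{\V} \Pi$. The reverse direction follows because $\Pi \mdl{\V} \De_0$ and $\De_0, \Si \mdl{\V} \De$ both hold by construction.

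The main subtlety---and the expected obstacle---is bridging between the residual $\De_0$, which lives in $\F(\y,\z)$ where the residuation ``$-$'' is available, and the final interpolant $\Pi$, which must involve only the variables $\y$. The $\y$-only Maehara interpolant $\Pi'$ will act as the bridge, since it can be compared with $\De_0$ via the dually Brouwerian structure of $\KCon \F(\y,\z)$ and with $\Pi$ via the left uniform deductive interpolation property.
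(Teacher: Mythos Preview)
Your proposal is correct and follows essentially the same approach as the paper's proof. Both directions use the same ingredients in the same order: for (i) $\Rightarrow$ (ii), specializing left uniform Maehara to recover Maehara interpolation, left uniform deductive interpolation, and condition (iv) of Proposition~\ref{prop:finpresbrouwerian}, then invoking Theorems~\ref{thm:maehara} and~\ref{thm:ludipalgebraic}; for (ii) $\Rightarrow$ (i), first forming the residual in $\KCon \F(\y,\z)$, then applying left uniform deductive interpolation to it, and using ordinary Maehara interpolation to handle the forward direction of the biconditional. The only cosmetic difference is that in the forward direction you chain $\Ga \mdl{\V} \Pi' \mdl{\V} \Pi$ by applying the left uniform interpolant property to the Maehara interpolant $\Pi'(\y)$, whereas the paper first deduces $\Ga \mdl{\V} \De_0$ and then applies the interpolant property to $\Ga$ itself; both are equally valid.
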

\begin{proof}
(i) $\Rightarrow$ (ii). Suppose that $\V$ admits left uniform Maehara interpolation. Then $\V$ clearly admits Maehara interpolation and left uniform deductive interpolation. Hence, by Theorem~\ref{thm:maehara}, $\V$ has the amalgamation property and the congruence extension property. Moreover, using the equivalent syntactic property given in Proposition~\ref{prop:finpresbrouwerian}, $\KCon \F(\x)$ is dually Brouwerian for any finite set $\x$. So by the previous theorem, the compact lifting of any homomorphism between finitely presented algebras in $\V$ has a left adjoint.

(ii) $\Rightarrow$ (i). Suppose that $\V$ has the amalgamation property and the congruence extension property (equivalently Maehara interpolation) and that the compact lifting of any homomorphism between finitely presented algebras in $\V$ has a left adjoint. Then $\V$ admits  deductive interpolation, so, by the previous theorem, $\V$ admits left uniform deductive interpolation and $\KCon \F(\x)$ is a dually Brouwerian join-semilattice for any finite set $\x$. Now consider any finite sets $\y,\z$ and finite sets of equations $\Si(\y,\z),\De(\y,\z)$. Using the fact that $\KCon \F(\y,\z)$ is dually Brouwerian, we obtain a finite set of equations $\Pi'(\y,\z)$ such that for any finite set of equations $\Ga'(\y,\z)$,
\[
\Ga', \Si \mdl{\V} \De \iff \Ga' \mdl{\V} \Pi'.
\]
Now, using left uniform deductive interpolation, we obtain a finite set of equations $\Pi(\y)$ such that for any finite set of equations $\Ga(\x,\y)$,
\[
\Ga \mdl{\V} \Pi' \iff \Ga \mdl{\V} \Pi.
\]
Consider any finite set of equations $\Ga(\x,\y)$. Observe first that, if $\Ga \mdl{\V} \Pi$, then $\Ga \mdl{\V} \Pi'$. But also $\Pi', \Si \mdl{\V} \De$, so, as required, $\Ga, \Si \mdl{\V} \De$. Suppose, conversely, that $\Ga, \Si \mdl{\V} \De$. Using Maehara interpolation, we obtain a finite set of equations $\Pi''(\y,\z)$ such that
\[
\Ga \mdl{\V} \Pi'' \quad \text{and} \quad \Pi'', \Si \mdl{V} \De.
\]
Hence, using the first equivalence above, $\Pi'' \mdl{\V} \Pi'$, and so also $\Ga \mdl{\V} \Pi'$. Finally then, using the second equivalence above, $\Ga \mdl{\V} \Pi$. We conclude that $\V$ admits left uniform Maehara interpolation. 
\end{proof}


\section{Uniform interpolation and model completions}\label{sec:modelcompletions}

In this section we provide sufficient conditions for the first-order theory of a variety $\V$ to have a model completion. In particular, we show that this is the case if $\V$ admits left and right Maehara uniform interpolation (Corollary~\ref{cor:maeharamodelcomp}). Let us emphasize that a variety of algebras over an algebraic signature $\lang$ may also be understood as a class of structures over the first-order language $\lang$ in the sense of model theory. That is, an algebra is a structure in the first-order language containing a function symbol for each operation symbol of $\lang$. By the first-order theory of a variety, we mean the set of first-order sentences in this language that are true in all algebras of the variety.

We first recall some relevant notions from model theory, referring to~\cite{CK90}*{Sec~3.5} for further details. Two first-order theories $T$ and $T'$ are called \emph{co-theories} if they entail the same universal sentences. Semantically, $T'$ is a co-theory of $T$ if, and only if, every model of $T$ embeds into a model of $T'$ and vice versa. A first-order theory $T^*$ is called \emph{model complete} if every formula is equivalent over $T^*$ to an existential formula. That is, model complete theories are those in which alternations of quantifiers can be eliminated. Semantically, a theory $T^*$ is model complete if, and only if, every embedding between models of $T^*$ is elementary. A theory $T^*$ is called a \emph{model companion} of $T$ if it is a model complete co-theory of $T$. A {\em model completion} of $T$ is a model companion $T^*$ such that for any model $M$ of $T$, the theory of $T^*$ together with the diagram of $M$ is complete. 

In the following proposition, we collect some useful facts related to model completions:

\begin{proposition}[See, e.g.,~\cite{CK90}*{Prop.~3.5.13,~3.5.15,~3.5.18,~3.5.19}]\label{prop:modcomp} \
\begin{enumerate}
\item[\rm (a)] Any theory has at most one model companion.
\item[\rm (b)] If a $\forall\exists$-theory $T$ has a model companion $T^*$, then $T^*$ coincides with the theory of the existentially closed models for $T$.
\item[\rm (c)] If $T^*$ is a model companion of $T$, then $T^*$ is a model completion of $T$ if, and only if, the class of models of $T$ has the amalgamation property. 

\item[\rm (d)] A model completion of a universal theory admits quantifier elimination.
\end{enumerate}
\end{proposition}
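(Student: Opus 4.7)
The plan is to prove each of the four items by standard model-theoretic chain and diagram arguments, exploiting the semantic reformulations recalled in the paragraph above: $T^*$ is model complete if and only if every embedding between its models is elementary, and $T^*$ is a model completion of $T$ when, additionally, each $T^* \cup \text{diag}(M)$ is complete for $M \models T$. All four statements are then consequences of combining these two semantic criteria with the co-theory property (mutual embeddings between models).

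For (a), I would take two model companions $T^*$ and $T^{**}$ of $T$ and, starting from any $M_0 \models T^*$, build an alternating chain $M_0 \hookrightarrow N_0 \hookrightarrow M_1 \hookrightarrow N_1 \hookrightarrow \cdots$ with $M_i \models T^*$ and $N_i \models T^{**}$; such embeddings exist because the two theories are mutual co-theories. Model completeness makes both the $M_i$-subchain and the $N_i$-subchain elementary, so the common union is elementarily equivalent to $M_0$ and to $N_0$; since $M_0$ was arbitrary, this forces $T^* = T^{**}$. For (b), I would first construct, above any model of the $\forall\exists$-theory $T$, an existentially closed extension by transfinitely realizing every consistent existential formula. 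Then I would identify models of $T^*$ with existentially closed models of $T$: any $N \models T^*$ is existentially closed in any larger model of $T$ by the elementary-embedding criterion, and conversely an e.c.\ model $M$ embeds existentially into some $N \models T^*$ by the co-theory property, hence elementarily by model completeness, so $M \models T^*$.

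For (c), I would use the standard diagram argument: amalgamation of two extensions $A, B \in \mathrm{Mod}(T)$ over a common substructure $M$ is equivalent to consistency of $T \cup \text{diag}(A) \cup \text{diag}(B)$, which by the co-theory property becomes consistency of $T^* \cup \text{diag}(A) \cup \text{diag}(B)$, and this in turn is equivalent via model completeness to compatibility of the two diagrams over $T^* \cup \text{diag}(M)$. The amalgamation property for $\mathrm{Mod}(T)$ is therefore exactly completeness of each such diagram-extension. For (d), assuming $T$ universal with model completion $T^*$, model completeness of $T^*$ yields an existential normal form for every formula; to obtain a quantifier-free normal form I would apply this to both $\varphi$ and $\neg\varphi$ and then use a compactness argument over $T \cup \text{diag}(M)$, together with completeness of $T^* \cup \text{diag}(M)$, to reconcile the two existential equivalents into a quantifier-free one.

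The main obstacle I expect is the careful bookkeeping in (d): the quantifier elimination argument needs the universality of $T$ to move freely between models of $T$ and models of $T^*$ via substructures, and it requires combining the two independently produced existential normal forms (for $\varphi$ and $\neg\varphi$) into a single quantifier-free formula uniformly in the parameters. The remaining parts (a)–(c) are essentially routine chain constructions once the semantic criteria for model completeness and model completion are in hand.
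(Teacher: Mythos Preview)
The paper does not prove this proposition at all: it is stated with a reference to \cite{CK90}*{Prop.~3.5.13,~3.5.15,~3.5.18,~3.5.19} and then used as a black box. So there is no ``paper's own proof'' to compare against; your proposal is essentially a sketch of the standard textbook arguments one would find in Chang--Keisler.

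Your outlines for (a) and (b) are the standard ones and are fine. For (c) your sketch is in the right direction but the phrasing ``compatibility of the two diagrams over $T^* \cup \text{diag}(M)$'' is doing a lot of work; the clean version is: amalgamating $A,B$ over $M$ in $\mathrm{Mod}(T)$ reduces, via the co-theory property, to amalgamating suitable $A^* , B^* \models T^*$ over $M$, and completeness of $T^* \cup \text{diag}(M)$ together with model completeness of $T^*$ (so that embeddings are elementary) is exactly what makes this possible via a joint elementary extension. For (d), your plan of producing existential normal forms for both $\varphi$ and $\neg\varphi$ and then reconciling them by compactness works, but there is a shorter route that avoids the bookkeeping you flag as the main obstacle: since $T$ is universal, any substructure of a model of $T^*$ is a model of $T$, so for any common substructure $M$ of two models $N_1, N_2 \models T^*$ the theory $T^* \cup \text{diag}(M)$ is complete and hence $N_1 \equiv_M N_2$; this substructure-completeness criterion is equivalent to quantifier elimination and bypasses the need to merge the two existential forms by hand.
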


The following theorem, although stated and proved here in a different form, is essentially~\cite{GZ97}*{Thm.~1}.

\begin{theorem}\label{thm:modelcompletion}
Suppose that $\V$ has the amalgamation property and admits left and right uniform deductive interpolation, and that $\KCon \m{A}$ is dually Brouwerian for any finitely presented $\m A$ in $\V$. Then the theory of $\V$ has a model completion.
\end{theorem}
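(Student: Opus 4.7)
The plan is to apply Proposition~\ref{prop:modcomp}(c): since $\V$ has the amalgamation property, it suffices to exhibit a model companion $T^*$ of the first-order theory $T$ of $\V$. I would axiomatize $T^*$ by adjoining to $T$, for each finite set of equations $\Si(\x,\y)$, the sentence
\[
\varphi_\Si \: := \: \forall \y\, \Bigl(\bigwedge \exists_\x(\Si)(\y) \to \exists \x\, \bigwedge \Si(\x,\y)\Bigr),
\]
where $\exists_\x(\Si)$ is the right uniform interpolant from Section~\ref{sec:right}. Since the converse implication $\Si \models_\V \exists_\x(\Si)$ already holds in $T$, the theory $T^*$ proves the equivalence $\exists \x\, \bigwedge \Si \leftrightarrow \bigwedge \exists_\x(\Si)$ for every such $\Si$.

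For the co-theory property, every model of $T^*$ is evidently a model of $T$; conversely, I would embed a given $\m{A} \in \V$ into a model of $T^*$ via an $\omega$-chain $\m{A} = \m{A}_0 \subseteq \m{A}_1 \subseteq \cdots$ in $\V$. At each stage $n$, for every tuple $\bar b$ from $\m{A}_n$ and every finite $\Si(\x,\y)$ with $\m{A}_n \models \bigwedge \exists_\x(\Si)(\bar b)$, observe by Lemma~\ref{lem:algcons} and right uniform interpolation that the quotient $\F(\y)/\cg{\F(\y)}(\exists_\x(\Si))$ embeds into $\F(\x,\y)/\cg{\F(\x,\y)}(\Si)$ and admits a homomorphism into $\m{A}_n$ sending $\y$ to $\bar b$; amalgamating $\m{A}_n$ with $\F(\x,\y)/\cg{\F(\x,\y)}(\Si)$ over this shared subalgebra produces an extension $\m{A}_{n+1} \in \V$ in which $\exists \x\, \bigwedge \Si(\x, \bar b)$ is realized. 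Enumerating all such data and taking the union of the chain yields the desired model of $T^*$ containing $\m{A}$.

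The main obstacle is then to show that $T^*$ is model complete, which I would tackle by establishing quantifier elimination in $T^*$. Induction on formula complexity reduces this to eliminating a single existential in front of a quantifier-free matrix; writing the matrix in disjunctive normal form and distributing the existential further reduces to formulas of the shape $\exists \x\bigl(\bigwedge \Si(\x,\y) \wedge \bigwedge_i \lnot \de_i(\x,\y)\bigr)$. When the $\lnot \de_i$ conjuncts are absent, the axiom $\varphi_\Si$ directly supplies the equivalent quantifier-free formula $\bigwedge \exists_\x(\Si)(\y)$. The delicate case is the presence of negated equations; here I would invoke the dual Brouwerianity of $\KCon$ on finitely presented algebras, used in its syntactic form from Proposition~\ref{prop:finpresbrouwerian}(iv), together with left uniform deductive interpolation (Theorem~\ref{thm:ludipalgebraic}). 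These jointly supply a relative complement in the semilattice of compact congruences, allowing each $\lnot \de_i$ to be traded for positive information modulo $T^*$, and thereby reducing the problem to the previous positive case. Once quantifier elimination is established, $T^*$ is model complete and hence a model companion of $T$, and amalgamation combined with Proposition~\ref{prop:modcomp}(c) promotes $T^*$ to the required model completion.
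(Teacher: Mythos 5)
There is a genuine gap, and it lies at the heart of the argument: your candidate theory $T^*$ consists only of the \emph{positive} axioms $\bigwedge \exists_\x(\Si) \to \exists \x \bigwedge \Si$, but a model companion must also decide existential formulas whose matrix contains negated equations, and these cannot be ``traded for positive information modulo $T^*$''. Since $\V$ is closed under quotients (in particular every positive set of equations is satisfied in the trivial algebra), no negated equation is a $\V$-consequence of positive formulas, so the delicate case of your quantifier-elimination induction does not reduce to the positive case over your $T^*$. What is actually needed is to \emph{axiomatize} the missing implication: for a matrix $\phi_\Ga \land \bigwedge_m \neg\phi_{\De_m}$ the correct quantifier-free equivalent is $\phi_\Si \land \bigwedge_m \neg \phi_{\Pi_m}$, where $\Si$ generates $i^{-1}(\theta_\Ga)$ and each $\Pi_m$ generates $\forall_i(\theta_{\De_m} - \theta_\Ga)$ --- this is precisely where the dual Brouwerian difference and the left adjoint $\forall_i$ enter, but as ingredients of the axioms, not of a derivation. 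Only the direction $\exists x\,(\phi_\Ga \land \bigwedge\neg\phi_{\De_m}) \to \phi_\Si \land \bigwedge\neg\phi_{\Pi_m}$ is provable from $T$ (by the adjunction $\forall_i \dashv i^*$ and the residuation law for $-$); the converse implications must be added to $T^*$, and one must then prove they are conservative for universal sentences, i.e.\ that every algebra in $\V$ extends to one realizing the existential \emph{together with} the negative side conditions. That realization argument (build the quotient of $\F(\x)$ by $\theta_\Ga \lor i^*(\ker \hat{f'})$, check via adjunction that the $\theta_{\De_m}$ are avoided, and amalgamate to pass from the subalgebra generated by the parameters to the whole algebra) is the substantive core of the proof and is absent from your proposal.

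A secondary, more repairable flaw: in your chain construction you amalgamate $\m{A}_n$ with $\F(\x,\y)/\cg{\F(\x,\y)}(\Si)$ ``over'' $\F(\y)/\cg{\F(\y)}(\exists_\x(\Si))$, but the homomorphism from the latter algebra to $\m{A}_n$ sending $\y$ to $\bar b$ need not be injective, so this is not a shared subalgebra and the amalgamation property does not apply as stated. The correct move is to work with the subalgebra of $\m{A}_n$ generated by $\bar b$, i.e.\ with the full kernel of $\hat{f'} \colon \F(\y) \to \m{A}_n$, form the congruence $\theta_\Ga \lor i^*(\ker \hat{f'})$ on $\F(\x,\y)$, and use $i^{-1}i^*(\ker \hat{f'}) = \ker \hat{f'}$ (a consequence of deductive interpolation) to see that this quotient extends the subalgebra generated by $\bar b$; amalgamation is then used only to pass from that subalgebra to all of $\m{A}_n$. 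Even with this fixed, however, your $T^*$ is too weak to be model complete, so the overall strategy does not yield the theorem without incorporating the mixed positive/negative axioms described above.
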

\begin{proof}
Fix a countable set of variables $\y$. Denote by $\mathcal{E}$ the set of finite sets of equations with variables in $\y$. 
Let 
\[
J := \{(\Ga,\De_1\dots,\De_n,x) : n \geq 0, \ \Ga,\De_1,\dots,\De_n \in \mathcal{E},\ x \in \y\}.
\]
For any $\Ga \in \mathcal{E}$, write $\phi_\Ga$ for the first-order formula $\bigwedge_{(\a,\b) \in \Ga} (\a \eq \b)$. For any $\x \subseteq \y$ containing the variables that occur in $\Ga$, let $\theta_\Ga(\x)$ be the compact congruence that $\Ga$ generates on $\F(\x)$. Throughout this proof, we will use the fact (cf. Lemma~\ref{lem:algcons}) that, for any algebra $\m A$ and assignment $f \colon \x \to \m A$, we have $\m A, f \models \phi_\Ga$ if, and only if, $\theta_\Ga(\x) \subseteq \ker \hat{f}$, where $\hat{f} \colon \F(\x) \to \m A$ is the unique extension of $f$ to the free algebra $\F(\x)$. For any non-empty finite sequence $(\Ga,\De_1,\dots,\De_n)$ of elements of $\mathcal{E}$, write $\phi_{(\Ga, \De_1, \dots, \De_n)}$ for the formula $\phi_{\Ga} \land \bigwedge_{m=1}^n \neg \phi_{\De_m}$.

Fix $j = (\Gamma,\De_1,\dots,\De_n,x) \in J$. Let $\x$ be the finite set of variables occurring in $\Ga \cup \bigcup_{m=1}^n \De_m$, and let $\z := \x \setminus \{x\}$. Let $i \colon \F(\z) \hookrightarrow \F(\x)$ be the homomorphism induced by the inclusion $\z \subseteq \x$. By the assumption that $\cls{V}$ admits right uniform deductive interpolation and Proposition~\ref{prop:rudipintermediate}, the congruence $i^{-1}(\theta_{\Ga}(\x))$ on $\F(\z)$ is compact, so we may pick a finite set of equations $\Si$ that generates this congruence. For $1 \leq m \leq n$, the congruences $\forall_i(\theta_{\De_m}(\x) - \theta_{\Ga}(\x))$ on $\F(\z)$ are compact by the assumption that $\cls{V}$ admits left uniform deductive interpolation and Proposition~\ref{prop:ludipintermediate}, so we may also pick finite sets of equations $\Pi_m$ that (interpreted in $\F(\z)$) generate these congruences. Now define $\psi_j$ to be the first-order formula $\phi_{(\Si,\Pi_1,\dots,\Pi_n)} \to \exists x \phi_{(\Ga,\De_1,\dots,\De_n)}$.

Let $T$ be the theory of $\V$ and define $T^* := T \cup \{\psi_j : j \in J\}$. We claim that $T^*$ is the model completion of $T$. To prove this, we will show that $T^*$ has quantifier elimination (cf. Proposition~\ref{prop:modcomp}(d)), and that $T^*$ is a co-theory of $T$. The result then follows by Proposition~\ref{prop:modcomp}(c).

\vspace{5mm}

{\bf $T^*$ has quantifier elimination.}
We prove that
\begin{equation}\label{eq:quantelim}
\text{for all } (\Ga,\De_1,\dots,\De_n,x) \in J, \quad T \vdash \left(\exists x \phi_{(\Ga,\De_1,\dots,\De_n)} \right)\to \phi_{(\Si,\Pi_1,\dots,\Pi_n)}.
\end{equation} 
From (\ref{eq:quantelim}), and the fact that $\psi_j \in T^*$ by definition, it follows that in $T^*$ the existential quantifier in any formula of the form $\exists x \phi_{(\Ga,\De_1,\dots,\De_n)}$ can be eliminated. It then follows that $T^*$ has quantifier elimination~\cite{CK90}*{Lemma~1.5.1}.

For the proof of (\ref{eq:quantelim}), let $j = (\Ga,\De_1,\dots,\De_n,x) \in J$.
Let $\m A$ be an algebra in $\V$. For any variable assignment $g \colon \z \to \m A$, we show that $\m A, g \models \exists x \phi_{(\Ga,\De_1,\dots,\De_n)}$ implies $\m A, g \models \phi_{(\Si,\Pi_1,\dots,\Pi_n)}$.
Let $g \colon \z \to \m A$ be any variable assignment such that $\m A, g \models \exists x \phi_{(\Ga,\De_1,\dots,\De_n)}$. We prove that $\m A, g \models \phi_{\Si}$ and $\m A, g \models \neg \phi_{\Pi_m}$ for all $1 \leq m \leq n$. Pick $a \in A$ such that $\m{A}, g \models \phi_{(\Ga,\De_1,\dots,\De_n)}(a)$. 
Extend $g$ to an assignment $f \colon \x \to \m A$ by letting $f(x) := a$. 
Since $\m A, f \models \phi_{\Ga}$, we have $\theta_{\Ga}(\x) \subseteq \ker \hat{f}$.
Hence, $\theta_{\Si}(\z) = i^{-1}(\theta_{\Ga}(\x)) \subseteq i^{-1}(\ker \hat{f}) = \ker \hat{f'}$, since $i \circ \hat{f'} = \hat{f}$, proving $\m A, f' \models \phi_{\Si}$.
Towards a contradiction, suppose that $\m A, f' \models \phi_{\Pi_k}$ for some $1 \leq k \leq n$. Then $i_*(\theta_{\De_k}(\x) - \theta_{\Ga}(\x)) = \theta_{\Pi_k}(\z) \subseteq \ker \hat{f'}$. By adjunction, we get $\theta_{\De_k}(\x) \subseteq \theta_{\Ga} \lor i^*(\ker \hat{f'}) \subseteq \ker \hat{f}$. But this contradicts the fact that $\m A, f \models \neg \phi_{\De_k}$.\\

{\bf $T^*$ is a co-theory of $T$.}
Since $T \subseteq T^*$, it suffices to show that any universal sentence entailed by $T^*$ is entailed by $T$. We will show first that, for any $j = (\Ga,\De_1,\dots,\De_n,x) \in J$, $\m A'$ in $\V$, and assignment $f' \colon \z \to \m A'$, 
\begin{multline}\label{eq:cotheory}
\text{if } \m A', f' \models \phi_{(\Si,\Pi_1,\dots,\Pi_n)}, \text{ then there exists } \iota \colon \m A' \hookrightarrow \m A \text{ such that } \\ \m A, \iota f' \models \exists x \phi_{(\Ga,\De_1,\dots,\De_n)}.
\end{multline}

To prove (\ref{eq:cotheory}), let $\hat{f'} \colon \F(\z) \to \m A'$ be the unique extension of $f'$ to a homomorphism. We first assume that $\hat{f'}$ is surjective, then subsequently deduce the general case of (\ref{eq:cotheory}).

Let $i \colon \F(\z) \into \F(\x)$ be the natural inclusion homomorphism. Consider the congruence $\theta := \theta_{\Ga} \lor i^*(\ker \hat{f'})$ on $\F(\x)$. Let $\m A$ be the quotient of $\F(\x)$ by $\theta$, and denote the natural quotient map by $p \colon \F(\x) \twoheadrightarrow \m A$. Note that, using the assumption $\m A', f' \models \phi_{\Si}$, we have $\theta_{\Si} = i^{-1}(\theta_{\Ga}) \subseteq \ker \hat{f'}$, and also $i^{-1}i^*(\ker \hat{f'}) = \ker \hat{f'}$ by~\cite{MMT14}*{Lemma~5}. It follows that $i^{-1}(\theta) = \ker \hat{f'}$, so there is a well-defined injective homomorphism $\iota \colon \m A' \hookrightarrow \m A$ which satisfies $\iota \circ \hat{f'} = p \circ i$. 

\begin{center}
\begin{tikzpicture}
\matrix (m) [matrix of math nodes, row sep=1.5em, column sep=1.5em]{
\F(\z)	& & \F(\x) \\
&	& \\
\m A'	& & \m A &\\
};

\path
(m-1-1) edge[right hook->] node[above] {$i$} (m-1-3)
(m-1-1) edge[->>] node[left] {$\hat{f'}$} (m-3-1)
(m-1-3) edge[->>] node[right] {$p$} (m-3-3)
(m-3-1) edge[right hook->] node[above] {$\iota$} (m-3-3);

\end{tikzpicture}
\end{center}
Let $f \colon \x \to \m A$ be the extension of the assignment $\iota f'$ by setting $f(x) := p(x)$. Note that $\hat{f} = p$, since these two homomorphisms agree on all the variables.
To conclude the proof of (\ref{eq:cotheory}), we will show that $\m A, f \models \phi_{(\Ga,\De_1,\dots,\De_n)}$. By definition, $\theta_{\Ga} \subseteq \theta = \ker p = \ker \hat{f}$. Also, for each $1 \leq k \leq n$, we have $\m A',f' \models \neg \phi_{\Pi_k}$, so $i_*(\theta_{\De_k} - \theta_{\Ga}) = \theta_{\Pi_k} \not\subseteq \ker \hat{f'}$, so by adjunction $\theta_{\De_k} \not\subseteq \theta_{\Ga} \lor i^{*}(\ker \hat{f'}) = \theta$, as required.

%
For the general case of (\ref{eq:cotheory}), let $\m B'$ be the image of $\hat{f'}$ in $\m A'$, and find an extension $\iota' \colon \m B' \to \m B$ with $\m B, \iota'f' \models \exists x \phi_{(\Ga,\De_1,\dots,\De_n)}$, witnessed by $b \in B$, say. Since $\V$ has the amalgamation property, applying this to the injections $\iota'$ and the inclusion $\m B' \to \m A'$, we find an extension $\iota \colon \m A' \to \m A$ together with a map $\lambda \colon \m B \to \m A$ such that $\lambda \iota' = \iota|_{\m B'}$. Since $\phi_{(\Ga,\De_1,\dots,\De_n)}$ is quantifier-free, we will now have $\m A, \iota f \models \exists x \phi_{(\Ga,\De_1,\dots,\De_n)}$ witnessed by $x := \lambda(b)$.

To finish the proof, let $\chi$ be a universal sentence such that $T^* \vdash \chi$. By the compactness theorem of first-order logic, there exists a finite subset $F \subseteq J$ such that $T \cup \{\psi_j : j \in F\} \vdash \chi$. We prove that, in fact, $T \vdash \chi$. Let $\m A'$ be any algebra in $\V$. Let $\z$ be the set of free variables occurring in $\{\psi_j : j \in F\}$, and pick an arbitrary assignment $f' \colon \z \to \m A'$. By repeatedly applying (\ref{eq:cotheory}), we obtain an extension $\iota \colon \m A' \into \m A$ such that $\m A, \iota f' \models \psi_j$ for each $j \in F$. Now, since $T \cup \{\psi_j : j \in F\} \vdash \chi$, we have $\m A\models \chi$. Since $\m A'$ is a subalgebra of $\m A$ and $\chi$ is universal, it follows that $\m A'\models \chi$, as required.
\end{proof}

\begin{corollary}
Suppose that $\V$ is locally finite and congruence distributive, has the amalgamation property, and for any finite sets $\x,\y$, the compact lifting of the inclusion homomorphism $i \colon \F(\y) \into \F(\x,\y)$ preserves intersections. Then the theory of $\V$ has a model completion.
\end{corollary}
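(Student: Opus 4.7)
The plan is to deduce the corollary directly from Theorem~\ref{thm:modelcompletion} by verifying its three hypotheses: that $\V$ has the amalgamation property, that $\V$ admits both right and left uniform deductive interpolation, and that $\KCon \m{A}$ is dually Brouwerian for every finitely presented $\m{A}$ in $\V$. The amalgamation property is assumed directly, so no work is needed for the first hypothesis.

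For the interpolation properties, I would argue as follows. Right uniform deductive interpolation follows by first applying Theorem~\ref{thm:amalgamation}(a) to obtain deductive interpolation from amalgamation, and then invoking Corollary~\ref{cor:rudiplocfin} to upgrade this to right uniform deductive interpolation in the locally finite setting. For left uniform deductive interpolation, the hypothesis that the compact lifting of every inclusion $i \colon \F(\y) \into \F(\x,\y)$ preserves intersections is precisely condition (ii) of Corollary~\ref{cor:leftlocfin}, which under local finiteness is equivalent to left uniform deductive interpolation.

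Finally, for the dually Brouwerian condition on compact congruences of finitely presented algebras, I would apply Proposition~\ref{prop:finpresbrouwerian}: under local finiteness, congruence distributivity (condition (v)) is equivalent to the property that $\KCon \m{A}$ is dually Brouwerian for every finitely presented $\m{A} \in \V$ (condition (iii)). With all three hypotheses of Theorem~\ref{thm:modelcompletion} now verified, that theorem yields a model completion of the theory of $\V$. There is no real obstacle here: the argument is a routine assembly of the equivalences established earlier in the paper, each of which collapses cleanly in the locally finite case.
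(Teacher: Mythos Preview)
Your proposal is correct and follows exactly the route the paper intends (the corollary is stated without proof, as an immediate consequence of Theorem~\ref{thm:modelcompletion}). One small correction: the hypothesis about compact liftings preserving intersections is stated for \emph{finite} sets $\x,\y$, so it matches the second clause of condition~(iii) in Corollary~\ref{cor:leftlocfin}, not condition~(ii) (which quantifies over arbitrary sets). Since you have already derived deductive interpolation from amalgamation, condition~(iii) is fully verified and the argument goes through unchanged.
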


\begin{example}[Sugihara monoids]
The variety of Sugihara monoids is locally finite, congruence distributive (as it has a lattice reduct), and has the amalgamation property~\cite{MM12}. Moreover, for any finite sets $\x,\y$, the compact lifting of the inclusion homomorphism $i \colon \F(\y) \into \F(\x,\y)$ preserves intersections. Hence the theory of Sugihara monoids admits a model completion.
\end{example}

\begin{corollary}\label{cor:maeharamodelcomp}
If a variety $\V$ admits both left and right uniform Maehara interpolation, then the theory of $\V$ has a model completion.
\end{corollary}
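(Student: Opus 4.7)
The plan is to derive this directly from Theorem~\ref{thm:modelcompletion} by showing that the hypotheses of left and right uniform Maehara interpolation supply all the ingredients needed there: amalgamation, left and right uniform deductive interpolation, and dual Brouwerianness of $\KCon \m A$ for every finitely presented $\m A \in \V$.

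First, I would invoke Theorems~\ref{thm:umaehararight} and~\ref{thm:umaeharaleft}. Each of the two Maehara interpolation hypotheses separately gives that $\V$ has the amalgamation property and the congruence extension property (in particular, Theorem~\ref{thm:maehara} applies), and, respectively, that the compact lifting of any homomorphism between finitely presented algebras in $\V$ has a right adjoint and a left adjoint. Observe also that right (resp.\ left) uniform Maehara interpolation syntactically implies right (resp.\ left) uniform deductive interpolation, by specializing the statement to $\Si = \emptyset$ (resp.\ by taking $\x$ to contain only fresh variables not occurring in $\De$).

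Second, to invoke Theorem~\ref{thm:modelcompletion}, I still need that $\KCon \m A$ is dually Brouwerian for every finitely presented $\m A$ in $\V$. This is exactly the content of the equivalence (i) $\Leftrightarrow$ (iii) in Proposition~\ref{prop:finpresbrouwerian}: since in particular every finite presentation $p \colon \F(\x) \onto \m A$ is a homomorphism between finitely presented algebras, its compact lifting $p^*$ has a left adjoint, and so $\KCon \m A$ is dually Brouwerian for every finitely presented $\m A$.

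Combining these observations, $\V$ satisfies all the hypotheses of Theorem~\ref{thm:modelcompletion}, and therefore the theory of $\V$ has a model completion. No genuine obstacle is expected: the corollary is essentially a bookkeeping exercise, collecting the previously established equivalences and feeding them into the earlier model-completion theorem; the only mild care required is to note that uniform Maehara interpolation entails plain uniform deductive interpolation, so that both of the "uniform deductive interpolation" hypotheses of Theorem~\ref{thm:modelcompletion} are indeed available.
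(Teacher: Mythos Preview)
Your proposal is correct and matches the paper's intended (implicit) argument: the corollary is stated without proof, as it follows directly from Theorem~\ref{thm:modelcompletion} once Theorems~\ref{thm:umaehararight} and~\ref{thm:umaeharaleft} (together with Proposition~\ref{prop:finpresbrouwerian}) supply amalgamation, left and right uniform deductive interpolation, and the dual Brouwerian property of $\KCon \m A$ for finitely presented $\m A$. One minor slip: for the left case the specialization yielding left uniform deductive interpolation from left uniform Maehara is again $\Si = \emptyset$ (not a condition on $\x$), but this does not affect the argument.
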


\begin{example}[Varieties of Heyting algebras]
The non-trivial varieties of Heyting algebras whose theories have a model completion are exactly those that admit left and right uniform interpolation, which, by the results in \cite{GZ02}*{Ch.~4}, are exactly the seven varieties identified by Maksimova that admit deductive interpolation (equivalently, have the amalgamation property).
\end{example}

Let us conclude by comparing our Theorem~\ref{thm:modelcompletion} to \cite{GZ02}*{Thm.~3.8}, which states that if the opposite category of the finitely presented algebras in a variety is an r-Heyting category, then the variety has a model completion.

We briefly recall the definition of a r-Heyting category from \cite{GZ02}*{Ch.~3}, where the reader can find more details. Note that the following category-theoretic definitions will be applied to the {\em opposite} of a category of finitely presented algebras. In particular, `regular subobjects' correspond to compact congruences, `epis' are subalgebra inclusions, `pullback functors' are the {\em covariant} liftings of the form $h^*$ (Def.~\ref{dfn:lifting}), and left and right adjoints are swapped with respect to the rest of this paper, also see Remark~\ref{rem:catthry} above. An {\em r-regular category} is defined to be a category which has finite limits, each arrow factorizes as an epi followed by a regular mono, and epis are stable under pullbacks. By \cite{GZ02}*{Prop.~3.3}, a category with finite limits is r-regular if, and only if, the pullback functors operating on regular subobjects have left adjoints satisfying the Beck-Chevalley condition. An {\em r-Heyting category} is an r-regular category in which finite joins of regular subobjects exist, and the pullback functors on regular subobjects have right adjoints.
\begin{proposition}\label{prop:comparison}
Let $\V$ be a variety with the congruence extension property. The following are equivalent:
\begin{enumerate}
\item The opposite of the category of finitely presented algebras in $\V$ is an r-Heyting category. 
\item $\V$ admits left and right uniform deductive interpolation, and $\KCon \m{A}$ is a Heyting algebra for any finitely presented $\m{A}$ in $\V$.
\item  $\V$ has the left and right Maehara uniform interpolation properties, and $\KCon \m{A}$ has finite meets for any finitely presented $\m{A}$ in $\V$.
\end{enumerate}
\end{proposition}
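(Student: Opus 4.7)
The plan is to establish the three equivalences by combining the categorical correspondence in Remark~\ref{rem:catthry} with the algebraic characterizations of uniform interpolation proved in Theorems~\ref{thm:rudipalgebraic} and~\ref{thm:ludipalgebraic} and their Maehara counterparts in Theorems~\ref{thm:umaehararight} and~\ref{thm:umaeharaleft}.

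For (i) $\Leftrightarrow$ (ii), I would unpack the definition of r-Heyting category applied to $\V^{op}_{\mathrm{fp}}$ and translate each clause through the anti-isomorphism of Remark~\ref{rem:catthry}. Under that anti-isomorphism, pullback functors on regular subobjects in $\V^{op}_{\mathrm{fp}}$ correspond to the compact liftings $h^*|_{\KCon}$ of homomorphisms $h$ between finitely presented algebras in $\V$. The r-regularity clause, equivalent by \cite{GZ02}*{Prop.~3.3} to the existence of left adjoints to pullbacks satisfying Beck--Chevalley, translates into the existence of right adjoints to each $h^*|_{\KCon}$ together with the commuting-square condition of Proposition~\ref{prop:DIPBC}(iii); by Theorem~\ref{thm:rudipalgebraic} this is exactly right uniform deductive interpolation. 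The remaining r-Heyting clauses — existence of right adjoints to pullbacks and of finite joins of regular subobjects — translate, respectively, into existence of left adjoints to each $h^*|_{\KCon}$ (equivalently, by Theorem~\ref{thm:ludipalgebraic}, left uniform deductive interpolation) and into existence of finite meets in $\KCon \m{A}$ for every finitely presented $\m{A}$. Finally, the fact that regular subobjects in an r-Heyting category form a Heyting algebra dualizes, under the anti-isomorphism, to the Heyting-algebra structure on $\KCon \m A$ asserted in (ii).

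For (ii) $\Leftrightarrow$ (iii), the standing assumption of the congruence extension property together with Theorems~\ref{thm:umaehararight} and~\ref{thm:umaeharaleft} gives that right (respectively left) uniform Maehara interpolation is equivalent to right (respectively left) uniform deductive interpolation. A Heyting algebra trivially has finite meets; for the converse, I would combine left uniform Maehara interpolation with the assumed existence of finite meets on $\KCon\m{A}$, applying Proposition~\ref{prop:finpresbrouwerian} to obtain dual Brouwerian structure, and then use right uniform interpolation to supply compact representatives for the Heyting implication, upgrading $\KCon\m{A}$ to a full Heyting algebra.

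The main obstacle is the careful bookkeeping of order reversals through the anti-isomorphism of Remark~\ref{rem:catthry}, in particular verifying that the Beck--Chevalley condition for r-regularity matches exactly the interpolation square of Proposition~\ref{prop:DIPBC}(iii), and explicitly exhibiting compact congruences that serve as Heyting implications in $\KCon \m A$, rather than merely as their images in the full congruence lattice. Once this translation is in hand, the three equivalences follow cleanly from the earlier theorems without further calculation.
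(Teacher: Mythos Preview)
Your approach is essentially the paper's: translate the r-Heyting conditions through Remark~\ref{rem:catthry} and then invoke Theorems~\ref{thm:rudipalgebraic}, \ref{thm:ludipalgebraic}, \ref{thm:umaehararight}, and \ref{thm:umaeharaleft}. Two imprecisions need fixing. First, Theorem~\ref{thm:ludipalgebraic} does not say that left adjoints to all $h^*|_{\KCon}$ are equivalent to left uniform deductive interpolation alone; it says this is equivalent to left uniform deductive interpolation \emph{together with} $\KCon \F(\x)$ being dually Brouwerian. That extra piece, combined with the finite-meets clause, is exactly what yields the Heyting structure in (ii) --- the paper simply records that Heyting algebras are the dually Brouwerian semilattices with all finite meets. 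Second, your plan in (iii) $\Rightarrow$ (ii) to ``use right uniform interpolation to supply compact representatives for the Heyting implication'' is misguided: once left uniform Maehara interpolation gives dually Brouwerian structure (via Theorems~\ref{thm:umaeharaleft} and \ref{thm:ludipalgebraic}) and finite meets are assumed, the Heyting structure is immediate; right uniform interpolation plays no role in that step.

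On Beck--Chevalley the paper takes a slightly different route from yours: rather than identifying it with the commuting square of Proposition~\ref{prop:DIPBC}(iii), it invokes the known equivalence (under the congruence extension property) between Beck--Chevalley and the amalgamation property, citing \cites{GRS03,Pit83}. Your direct identification is only a special case, since Beck--Chevalley is required for all pullback squares in $\V_{\mathrm{fp}}^{\op}$, not only those coming from inclusions of finitely generated free algebras; bridging that gap is precisely where the standing CEP hypothesis is used.
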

\begin{proof}
(i) $\Leftrightarrow$ (ii). By \cite{GZ02}*{Prop.~3.3} cited above, $\V$ is r-regular if, and only if, $\V$ admits right uniform deductive interpolation, since the Beck-Chevalley condition for left adjoints is equivalent to having the amalgamation property in the presence of the congruence extension property (see, e.g., \cites{GRS03,Pit83}). The equivalence now follows from Theorem~\ref{thm:ludipalgebraic} and the fact that Heyting algebras are exactly dually Brouwerian semilattices in which all finite meets exist. (Recall that finite joins of regular subobjects correspond to finite meets of compact congruences.)

(ii) $\Leftrightarrow$ (iii). By Theorem~\ref{thm:ludipalgebraic}, the conjunction of left uniform deductive interpolation and $\KCon \m{A}$ being dually Brouwerian for all finitely presented $\m{A}$ is equivalent to deductive interpolation and all compact liftings having left adjoints. By Theorems~\ref{thm:maehara} and \ref{thm:umaeharaleft}, in the presence of the congrue=nce extension property this is equivalent to left Maehara uniform interpolation. By Theorem~\ref{thm:umaehararight}, the congruence extension property and right uniform deductive interpolation are together equivalent to right Maehara uniform interpolation.
\end{proof}

\begin{remark}
In light of Proposition~\ref{prop:comparison}, our result in Theorem~\ref{thm:modelcompletion} is slightly more general than \cite{GZ02}*{Thm.~3.8}, in that we do not need to assume the congruence extension property and that finite meets of compact congruences exist. These properties were added in \cite{GZ02} to obtain the partial converse \cite{GZ02}*{Thm.~3.11}, which says that under certain conditions the opposite category of finitely presented algebras in a variety admitting a model-completion must be r-Heyting. However, close inspection of the proof of \cite{GZ02}*{Thm.~3.8}  shows that the additional assumptions on the category are not needed for the direction which establishes existence of a model completion on the basis of uniform interpolation properties. The slight generalization we obtain here may be useful for applications in contexts where the congruence extension property or the existence of finite meets of compact congruences fails.
\end{remark}


\bibliographystyle{model1a-num-names}

\begin{bibdiv}
\begin{biblist}
\bibselect{GMT17final}
\end{biblist}
\end{bibdiv}

\end{document}